\title[Stability of inflow problem of the Navier-Stokes equations]{Convergence to Superposition of Boundary Layer, Rarefaction and Shock for the Inflow Problem of the 1D Navier--Stokes Equations}
\author[Han]{Sungho Han}
\address[Sungho Han]{\newline Department of Mathematical Sciences \newline Korea Advanced Institute of Science and Technology, Daejeon  34141, Republic of Korea}
\email{sungho\_han@kaist.ac.kr}
\author[Kang]{Moon-Jin Kang}
\address[Moon-Jin Kang]{\newline Department of Mathematical Sciences \newline Korea Advanced Institute of Science and Technology, Daejeon  34141, Republic of Korea}
\email{moonjinkang@kaist.ac.kr}
\author[Kim]{Jeongho Kim}
\address[Jeongho Kim]{\newline Department of Applied Mathematics, \newline Kyung Hee University, 1732 Deogyeong-daero, Giheung-gu, Yongin-si, Gyeonggi-do 17104, Republic of Korea}
\email{jeonghokim@khu.ac.kr}
\author[Kim]{Nayeon Kim}
\address[Nayeon Kim]{\newline Department of Mathematical Sciences \newline Korea Advanced Institute of Science and Technology, Daejeon  34141, Republic of Korea}
\email{ny2347@kaist.ac.kr}
\author[Oh]{HyeonSeop Oh}
\address[HyeonSeop Oh]{\newline Department of Mathematical Sciences \newline Korea Advanced Institute of Science and Technology, Daejeon  34141, Republic of Korea}
\email{ohs2509@kaist.ac.kr}
\begin{document}
	\newtheorem{theorem}{Theorem}[section]
	\newtheorem{lemma}{Lemma}[section]
	\newtheorem{corollary}{Corollary}[section]
	\newtheorem{proposition}{Proposition}[section]
	\newtheorem{remark}{Remark}[section]
	\newtheorem{definition}{Definition}[section]
	
	\renewcommand{\theequation}{\thesection.\arabic{equation}}
	\renewcommand{\thetheorem}{\thesection.\arabic{theorem}}
	\renewcommand{\thelemma}{\thesection.\arabic{lemma}}
	\newcommand{\bbr}{\mathbb R}
	\newcommand{\bbz}{\mathbb Z}
	\newcommand{\bbn}{\mathbb N}
	\newcommand{\bbs}{\mathbb S}
	\newcommand{\bbp}{\mathbb P}
	\newcommand{\bbt}{\mathbb T}
	\newcommand{\<}{\langle}
	\renewcommand{\>}{\rangle}
	\newcommand{\e}{\varepsilon}
	\newcommand{\pa}{\partial}
	\newcommand{\tU}{\widetilde{U}}
	\newcommand{\tu}{\tilde{u}}
	\newcommand{\tv}{\tilde{v}}
	\newcommand{\tw}{\tilde{w}}

	\newcommand{\rd}{\partial}
	\newcommand{\na}{\nabla}
	
	\newcommand{\infi}{\infty}
	\newcommand{\R}{\mathbb{R}}	
	\newcommand{\sX}{\dot{\bold{X}}}
	\newcommand{\bB}{\bold{B}}
	\newcommand{\bG}{\bold{G}}
	\newcommand{\bS}{\bold{S}}
	\newcommand{\bD}{\bold{D}}
	\newcommand{\bX}{\bold{X}}
	\newcommand{\bY}{\bold{Y}}
	\newcommand{\cB}{\mathcal{B}}
	\newcommand{\cG}{\mathcal{G}}
	\newcommand{\cS}{\mathcal{S}}
	\newcommand{\cD}{\mathcal{D}}
	
	
	\newcommand{\norm}[1]{\left\lVert#1\right\rVert}
	\newcommand{\beq}{\begin{equation}}
\newcommand{\eeq}{\end{equation}}

\newcommand{\eps}{\varepsilon }
	

	
	\subjclass[2020]{35Q35, 76N06} 
	
	\keywords{$a$-contraction with shift; asymptotic behavior; inflow problem; barotropic Navier-Stokes equations}
	
	\thanks{\textbf{Acknowledgment.} S. Han, M.-J. Kang, N. Kim and H. Oh were partially supported by the National Research Foundation of Korea  (RS-2024-00361663  and NRF-2019R1A5A1028324). J. Kim was supported by Samsung Science and Technology Foundation under Project Number SSTF-BA2401-01. }

	\begin{abstract} 
		We establish the asymptotic stability of solutions to the inflow problem for the one-dimensional barotropic Navier--Stokes equations in half space. When the boundary value is located at the subsonic regime, all the possible thirteen asymptotic patterns are classified in \cite{M01}. We consider the most complicated pattern, the superposition of the boundary layer solution, the 1-rarefaction wave, and the viscous 2-shock waves. In this superposition, the boundary layer is degenerate and large. We prove that, if the strengths of the rarefaction wave and shock wave are small, and if the initial data is a small perturbation of the superposition, then the solution asymptotically converges to the superposition up to a dynamical shift for the shock. 
		As a corollary, our result implies the asymptotic stability for the simpler case where the superposition consists of the degenerate boundary layer solution and the viscous 2-shock. Therefore, we complete the study of the asymptotic stability of the inflow problem for the 1D barotropic Navier--Stokes equations for subsonic boundary values. 
	\end{abstract}
	
	\maketitle
	
	\tableofcontents
	
	\section{Introduction}
	We consider the inflow problem of the one-dimensional barotropic compressible Navier-Stokes (NS) equations, where the domain of flow is the half-line $\R_+\cup\{0\}=[0,\infty)$:
	\begin{align}
	\begin{aligned}\label{eq:NS_inflow_eulerian}
	&\rho_t +(\rho u)_x=0,\quad t>0,\quad x>0,\\
	&(\rho u)_t +(\rho u^2+P(\rho))_x=\mu u_{xx},\\
	&(\rho,u)(t,0) = (\rho_-,u_-),\quad \mbox{with}\quad \rho_-,u_->0,
	\end{aligned}
	\end{align}
	subject to the initial data
	\[(\rho,u)(0,x)=(\rho_0,u_0)(x),\quad \inf_{x>0}\rho_0(x)>0\]
	and the far-field condition
	\[\lim_{x\to\infty}(\rho,u)(t,x)=(\rho_+,u_+).\]
	Here, $\rho(t,x)$ and $u(t,x)$ are the density and velocity of the fluid in Eulerian coordinates, $P(\rho)=\rho^\gamma$ is the isentropic pressure law with an adiabatic constant $\gamma>1$, and $\mu$ is the viscosity of the fluid. For simplicity, we normalize $\mu=1$. In Lagrangian mass coordinates, the inflow problem \eqref{eq:NS_inflow_eulerian} can be reformulated in terms of the specific volume $v=1/\rho$ and the velocity $u$:
	\begin{align}
	\begin{aligned}\label{eq:NS_inflow_x}
	&v_t -u_x = 0,\quad t>0,\quad x>\sigma_-t,\\
	&u_t +p(v)_x=\left(\frac{u_x}{v}\right)_{x},\\
	&(v,u)(t,\sigma_-t)=(v_-,u_-),\quad \mbox{with}\quad v_-,u_->0,
	\end{aligned}
	\end{align}
	and the far-field condition becomes
	\begin{equation}\label{eq:fftx}
    \lim_{x\to\infty}(v,u)(t,x)=(v_+,u_+),
    \end{equation}
	where $\sigma_-:=-\frac{u_-}{v_-}<0$, $p(v)=v^{-\gamma}$, and $v_\pm = 1/\rho_\pm$. As the boundary $x=\sigma_-t$ moves along time, it is convenient to introduce a moving frame $\xi = x-\sigma_-t$ to fix the boundary at $\xi=0$. Then, the system \eqref{eq:NS_inflow_x} can be written in terms of $(t,\xi)$ as
	\begin{align}
	\begin{aligned}\label{eq:NS_inflow}
	&v_t -\sigma_- v_\xi -u_\xi = 0,\quad t>0,\quad \xi>0,\\
	&u_t-\sigma_-u_\xi +p(v)_\xi=\left(\frac{u_\xi}{v}\right)_{\xi},\\
	&(v,u)(t,0)=(v_-,u_-),
	\end{aligned}
	\end{align}
	subject to the initial data and the far-field condition
	\begin{equation}\label{boundary_inflow}
	(v,u)(0,\xi)=(v_0,u_0)(\xi),\quad \lim_{\xi\to\infty}(v,u)(t,\xi) = (v_+,u_+).
	\end{equation}
	We refer to \cite{MN01} for the detailed derivation of the inflow problem \eqref{eq:NS_inflow}--\eqref{boundary_inflow} in terms of the Lagrangian coordinates.
	
	In this paper, we are interested in the asymptotic behavior of the solutions $U=(v,u)$ to the inflow problem \eqref{eq:NS_inflow}--\eqref{boundary_inflow}. For the whole space problem, that is, when the spatial domain is $\R$, the asymptotic patterns of the solution are represented as the rarefaction wave, shock wave, or their superposition. Unlike the case of the whole space problem, the inflow problem has another type of stationary solution due to the boundary, which is called the ``boundary layer (BL) solution''. Therefore, the asymptotic patterns of the inflow problem are much more complicated, and not only the rarefaction and viscous shock waves appear, but also the BL solution appears. The possible asymptotic patterns depend on the location of the boundary value $U_-=(v_-,u_-)$ on the $(v,u)$-space. Let $c(v):=\sqrt{\gamma}v^{-(\gamma-1)/2}$ be the speed of sound. Then, depending on the relationship between $u$ and $c(v)$, the $(v,u)$-space is separated into the following subsonic, transonic, and supersonic regions respectively:
	\begin{align*}
	&\Omega_{\textup{sub}}:=\left\{(v,u)~|~u<c(v),\quad v>0,\quad u>0\right\},\\
	&\Gamma_{\textup{trans}}:=\left\{(v,u)~|~u=c(v),\quad v>0,\quad u>0\right\},\\
	&\Omega_{\textup{super}}:=\left\{(v,u)~|~u>c(v),\quad v>0,\quad u>0\right\}.
	\end{align*}
	We will focus on the case when $U_-\in \Omega_{\textup{sub}}$.
	In this case, there are 13 possible asymptotic patterns composed of the combinations of the BL solution, rarefaction wave, and viscous shock wave \cite{M01} (Figure \ref{fig:classification}, left). Among them, we are interested in case (13), where the asymptotic pattern is given as a superposition of the BL solution, 1-rarefaction, and viscous 2-shock, which are introduced as follows.
	
	For a given $U_-\in \Omega_{\textup{sub}}$, consider a line segment
	\[BL(U_-):=\left\{U\in \Omega_{\textup{sub}}\cup\Gamma_{\textup{trans}}~\bigg|~\frac{u}{v}=\frac{u_-}{v_-}=-\sigma_->0\right\},\]
	and let $U_*\in BL(U_-)\cap \Gamma_{\textup{trans}}$ (See Figure \ref{fig:classification}, right). Then, it is proved in \cite{M01} that there exists a unique stationary (degenerate) BL solution $U^{BL}(\xi)=(v^{BL}(\xi),u^{BL}(\xi))$ to \eqref{eq:NS_inflow} that satisfies
	\begin{align}
	\begin{aligned}\label{eq:BL}
	&-\sigma_-v^{BL}_\xi-u^{BL}_\xi=0,\\
	&-\sigma_-u^{BL}_\xi + p(v^{BL})_\xi = \left(\frac{u^{BL}_\xi}{v^{BL}}\right)_\xi,\\
	&U^{BL}(0)=U_-,\quad \lim_{\xi\to\infty}U^{BL}(\xi)=U_*.
	\end{aligned}
	\end{align}
	
	Next, we consider the right-end state $U_+$ that can be connected from $U_*$ by a superposition of 1-rarefaction and viscous 2-shock, with the intermediate state $U^*$. In other words, we consider $U^*\in R_1(U_*)$ where $R_1(U_*)$ is an integral curve of the first characteristic field associated with $\lambda_1(v)=-\sqrt{-p'(v)}$ for the hyperbolic part of \eqref{eq:NS_inflow_x}:
	\[R_1(U_*):=\left\{(v,u)~\bigg|~u=u_*-\int_{v_*}^v\lambda_1(s)\,ds,\quad v>v_*\right\}.\]
	Then, for any $U^*\in R_1(U_*)$, the following Riemann problem of the Euler equations on the whole space
	\begin{align}
	\begin{aligned}\label{eq:Euler}
	&v_t - u_x = 0,\quad t>0,\quad x\in \R,\\
	&u_t + p(v)_x=0,\\
	&(v,u)(0,x)=\begin{cases}
	(v_*,u_*),\quad \mbox{if}\quad x<0\\
	(v^*,u^*),\quad \mbox{if}\quad x>0
	\end{cases}
	\end{aligned}
	\end{align}
	admit the solution $U^r=(v^r,u^r)$ given as the 1-rarefaction wave, where
	\begin{equation}\label{eq:rarefaction_whole}
	v^r(x/t):=\begin{cases}
	v_*,\quad &x<\lambda_1(v_*)t,\\
	\lambda_1^{-1}(x/t),\quad &\lambda_1(v_*)t\le x\le \lambda_1(v^*)t,\\
	v^*,\quad &x>\lambda_1(v^*)t,
	\end{cases}\quad z_1(v^r(x/t),u^r(x/t))=z_1(v_*,u_*).
	\end{equation}
	Here, $z_1(v,u):=u+\int^v\lambda_1(s)\,ds$ is a 1-Riemann invariant to the Euler equations \eqref{eq:Euler}.
	
	Finally, we choose $U_+\in S_2(U^*)$ where $S_2(U^*)$ is a Hugoniot locus corresponding to the second characteristic field:
	\[S_2(U^*):=\left\{(v,u)~\Big|~ u=u^*-\sigma(v-v^*),\quad v>v^*\right\},\quad \sigma = \sqrt{-\frac{p(v)-p(v^*)}{v-v^*}}.\]
	Then, there exists a unique  viscous 2-shock wave $U^S=(v^S,u^S)(\zeta)$ with $\zeta = x-\sigma t=\xi-(\sigma-\sigma_-)t$, which is a traveling wave solution to \eqref{eq:NS_inflow}. A profile of the viscous 2-shock is given by the solution to the following system of ODEs:
	\begin{align}
	\begin{aligned}\label{eq:shock}
	&-\sigma (v^S)' - (u^S)'=0,\quad '=\frac{d}{d\zeta},\\
	&-\sigma (u^S)' + p(v^S)' = \left(\frac{(u^S)'}{v^S}\right)',\\
	&\lim_{\zeta\to-\infty}(v^S,u^S)(\zeta) = (v^*,u^*),\quad \lim_{\zeta\to+\infty}(v^S,u^S)(\zeta) = (v_+,u_+).
	\end{aligned}
	\end{align}
	We refer to Figure \ref{fig:classification} for the locations of the states $U_\pm$, $U_*$ and $U^*$.

	\begin{figure}[h!]
		\includegraphics[width=0.45\textwidth]{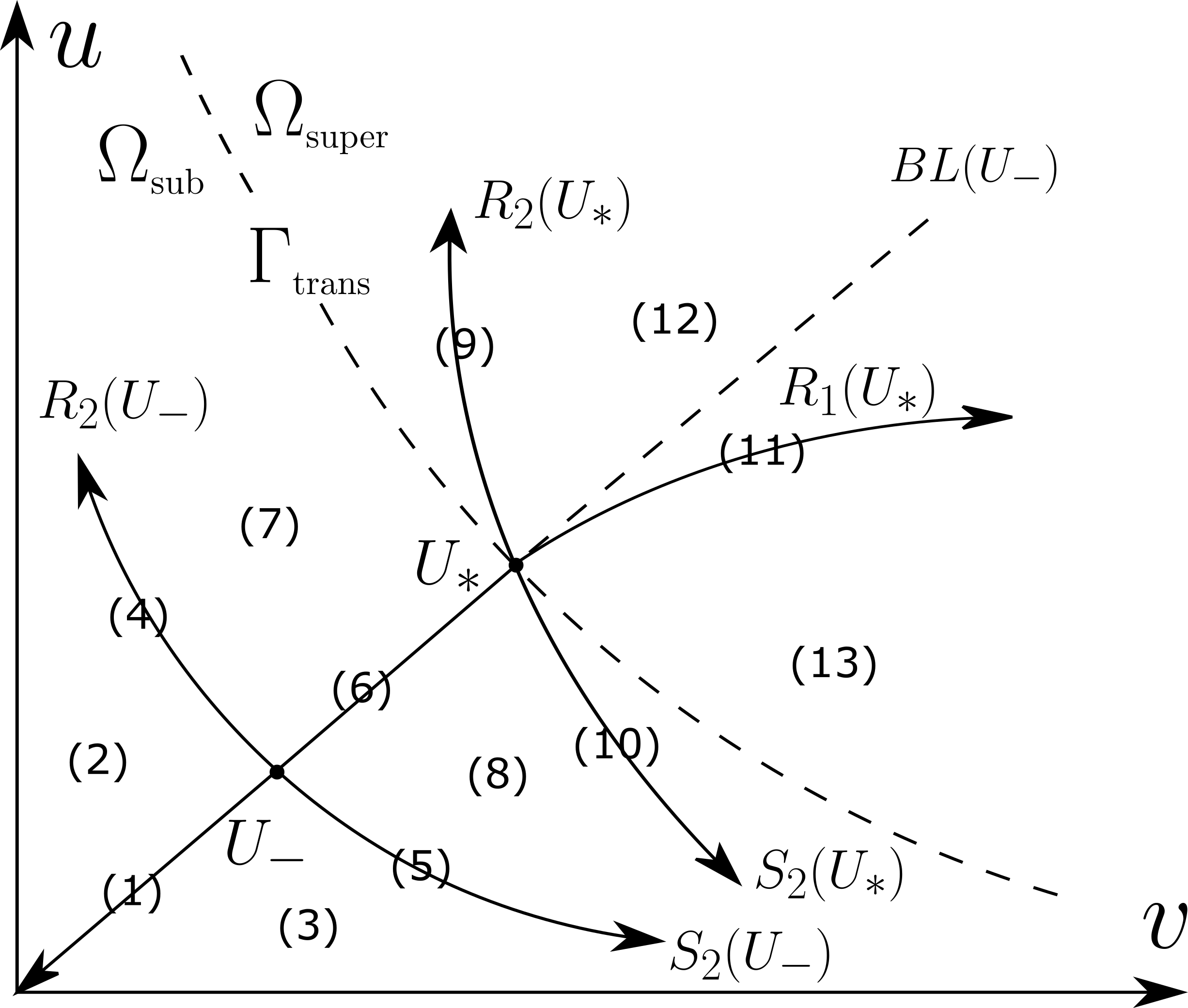}\quad 
		\includegraphics[width=0.45\textwidth]{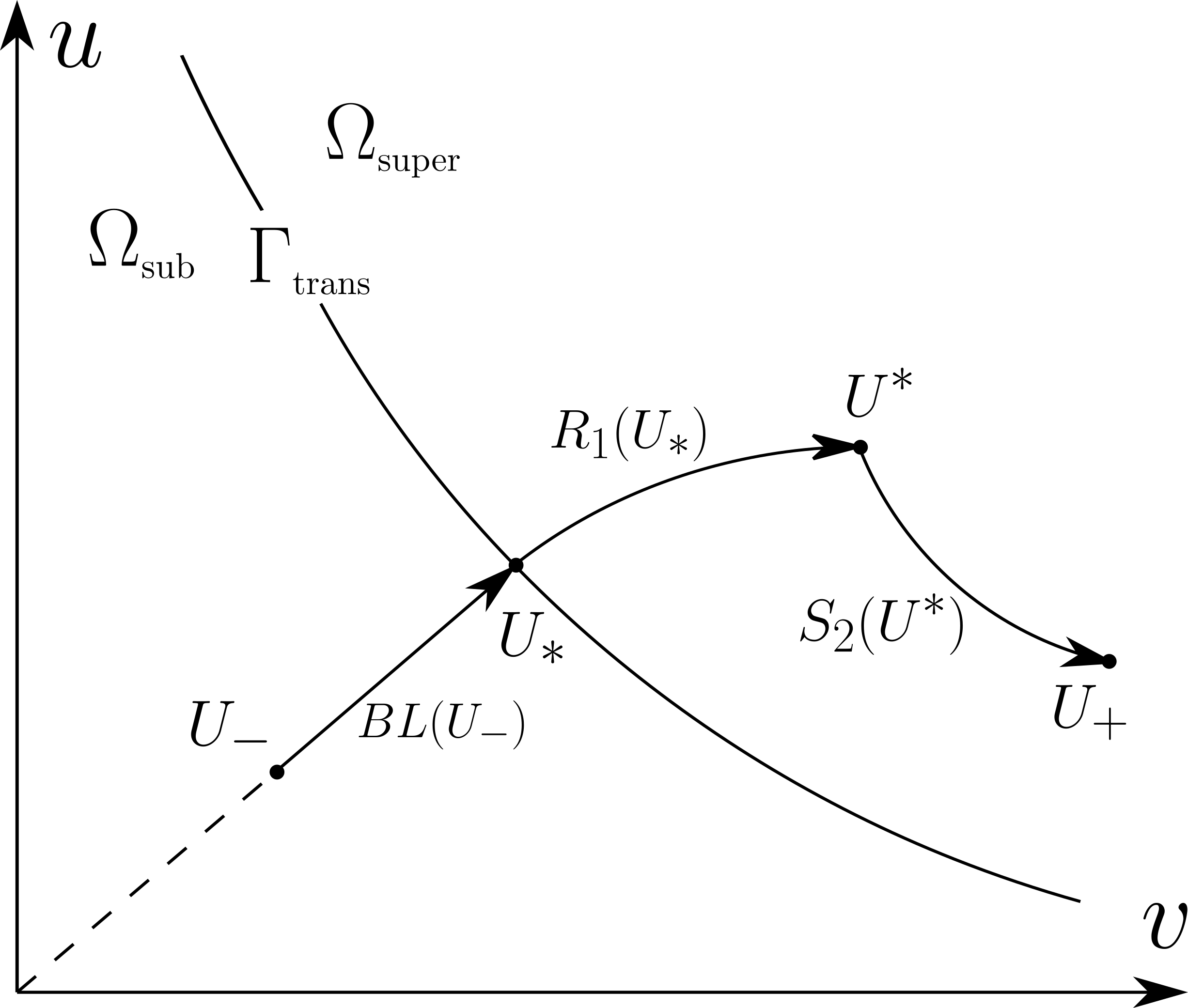}
		\caption{(Left) A complete classification of the asymptotic patterns for the inflow problem with $U_-\in \Omega_{\textup{sub}}$. The numbers of classification are the same as those in \cite{M01}. (Right) The case (13), where the three elementary waves connect $U_-$ and $U_+$. The left-end state $U_-\in \Omega_{\textup{sub}}$ is connected to $U_*\in BL(U_-)\cap\Gamma_{\textup{trans}}$ via the BL solution, $U_*$ is connected to $U^*\in R_1(U_*)$ via the 1-rarefaction wave, and $U^*$ is connected to $U_+\in S_2(U^*)$ via the viscous 2-shock wave.}
		\label{fig:classification}
	\end{figure}
	
	\subsection{Main result}
	We now present the main result of the paper. As we explained, we are considering the case when $U_-$ and $U_+$ are connected via the superposition of the BL solution, 1-rarefaction wave, and viscous 2-shock. The goal of this paper is to prove the asymptotic stability of the superposition of these elementary waves. The precise statement of the main theorem is as follows.
	
	\begin{theorem}\label{thm:main}
	For any $U_-\in \Omega_{\textup{sub}}$, let $U_*$ be the (unique) constant state which satisfies $U_*\in BL(U_-)\cap \Gamma_{\textup{trans}}$. Then, there exist positive constants $\delta_0$, $\e_0$ small enough such that 
		for any two constant states $U^*, U_+$ with $U^*\in R_1(U_*)$ and $U_+\in S_2(U^*)$ satisfying
	\[|U_*-U^*|+|U^*-U_+|<\delta_0,\] 
	 there exists $\beta>0$ large enough (depending only on the shock strength $|U^*-U_+|$) such that the following holds.
	If any initial data $U_0=(v_0,u_0)$ satisfy
 \begin{equation} \label{initial condition}
 \begin{aligned}
     \|U_0 - U^* - ( U^{BL}  - U_*)\|_{L^2(0,\beta)} + \|U_0-U_+ - (U^{BL} - U_*)\|_{L^2(\beta,\infty)} +\|\pa_x (U_0-U^{BL})\|_{L^2(\R_+)} <\e_0,
    \end{aligned}
    \end{equation}
	then the inflow problem \eqref{eq:NS_inflow_x}-\eqref{eq:fftx} subject to the initial data $U_0$ admits a unique global-in-time solution $U(t,x)=(v,u)(t,x)$. Moreover, there exists a Lipschitz continuous shift function $t\mapsto X(t)$ such that 
	\begin{align*}
	&v(t,x)-\left(v^{BL}(x-\sigma_-t)+\tilde{v}^R(t,x)+v^S(x-\sigma t-X(t)-\beta)-v_*-v^*\right)\in C([0,\infty);H^1(\sigma_-t, \infty)),\\
	&u(t,x)-\left(u^{BL}(x-\sigma_-t)+\tilde{u}^R(t,x)+u^S(x - \sigma t-X(t)-\beta)-u_*-u^*\right)\in C([0,\infty);H^1(\sigma_- t,\infty)),\\
	&u_{x x}(t,x)-\left(u^{BL}(x-\sigma_-t)+\tilde{u}^R(t,x)+u^S(x-\sigma t-X(t)-\beta)\right)_{x x}
    \in L^2(0,\infty;L^2(\sigma_- t, \infty)),
	\end{align*}
    where $(\tilde{v}^R,\tilde{u}^R)(t,x)$ denotes the smooth approximation of the 1-rarefaction wave defined in \eqref{eq:tvrtur}.
    
	Finally, the solution asymptotically converges to the superposition of waves:
	\begin{align*}
	&\sup_{x \in (\sigma_- t, \infty)}\bigg|(v,u)(t,x)-\bigg(v^{BL}(x-\sigma_-t)+v^r\left(\frac{x}{t}\right)+v^S(x - \sigma t-X(t)-\beta)-v_*-v^*,\\
	&\hspace{4.5cm}u^{BL}(x-\sigma_-t)+u^r\left(\frac{x}{t}\right)+u^S(x - \sigma t-X(t)-\beta)-u_*-u^*\bigg)\bigg|\to 0
	\end{align*}
    as $t \to \infty$, and the speed of the shift tends to zero
	\[\lim_{t\to\infty}|\dot{X}(t)|=0.\]
	\end{theorem}

\begin{remark}
\begin{enumerate}
    \item Theorem \ref{thm:main} requires the strength of the rarefaction wave and the viscous shock to be small and bounded by the constant $\delta_0$, but we do not impose any smallness condition for the boundary layer solution. 
    The small constant $\delta_0$ does not depend on  $\eps_0$ the smallness of the initial perturbation.
    \item 
   As in \cite{HMS03,HKKL_pre2}, 
  we consider shock waves $(v^S,u^S)(x-\sigma t-\beta)$ sufficiently far from the boundary by the constant $\beta$, where $\beta$ depends only on the shock strength (but independent of $\eps_0$).  
This choice plays a crucial role in minimizing the discrepancy between the boundary datum and the shock wave value at the origin.      
    \item For the initial condition \eqref{initial condition}, since we are considering the weak shock far from the origin that connects $U^*$ to $U_+$, we measure the $L^2$-norm of the initial perturbation relative to $\beta$. In addition, since we are considering  the small initial perturbation but the large boundary layer approaching $U_*$ at the far-field, the boundary layer should be added to all norms in \eqref{initial condition}.
   \item  Theorem \ref{thm:main} implies the long-time behavior towards the superposition of boundary layer, rarefaction and viscous shock, where the viscous shock is shifted by a dynamical shift $X(t)$.
	 Indeed, the decay estimate $\displaystyle{\lim_{t\to\infty}|\dot{X}(t)|=0}$ implies
		$$
		\lim_{t\rightarrow+\infty}\frac{X(t)}{t}=0,
		$$
		which means that the shift function $X(t)$ grows at most sub-linearly as $t\to\infty$. Thus, the dynamically shifted wave $(v^S,u^S)(x-\sigma t-X(t)-\beta)$ time-asymptotically converges to  the family of shock waves, $(v^S,u^S)(x-\sigma t-c)$ for $c\in\bbr$, although we do not know the final location of the asymptotic profile.
\end{enumerate}
\end{remark}

\subsection{Related results}
	We review several previous results related to the asymptotic stability of the initial boundary value problems of the NS system. The study on the nonlinear stability for the boundary value problems of the one-dimensional barotropic compressible NS equations was initiated by Matsumura and his collaborators. In \cite{MM99} and \cite{MN99}, the impermeable wall problems (that is, $u_-=0$) were treated, where they proved the stability of the outgoing viscous shock and rarefaction waves, respectively. For the case of the impermeable wall problem, the asymptotic behaviors of solutions are completely classified into these two cases, and therefore, the stabilities for all the cases are proved. A recent paper \cite{HKKL_pre2} revisited the stability of viscous shock wave for the impermeable wall problem by using the method of $a$-contraction with shifts, eliminating the initial condition for an anti-derivative variable. 
    
    On the other hand, the long-time behaviors of the inflow and outflow problems are much more complicated. In \cite{M01}, Matsumura completely classified all the asymptotic patterns of the solutions to the inflow and outflow problems according to the left-end state (boundary value) and the right-end state (far-field state). As the inflow problem is our main concern, we only review the results on it. The studies for the asymptotic behavior of the inflow problem can be further split into two cases: (i) when $U_-\in \Omega_{\textup{sub}}\cup\Gamma_{\textup{trans}}$ or (ii) when $U_-\in \Omega_{\textup{super}}$. In the former case, there are thirteen possible asymptotic patterns, which are combinations of the BL solution, rarefaction wave, and the viscous shock waves (See the left of Figure \ref{fig:classification}). 
	
	In \cite{MN01}, the stability of the BL solution (case (1),(6)), the superposition of the BL solution and the 2-rarefaction wave (case (2),(4),(7)), and the superposition of the BL solution with two rarefaction waves (case (9),(11),(12)) are proved. These results were improved in \cite{FLWZ14}, where they showed the stability of BL solution with large initial perturbations. On the other hand, the cases involving viscous 2-shocks (case (3),(5),(8)) were treated in \cite{HMS03}. Therefore, as far as we know, the following two cases remain open:\\
	
	\begin{enumerate}[(I)]
		\item The superposition of degenerate BL solution and the viscous 2-shock (case (10)).\\
		
		\item The superposition of degenerate BL solution, 1-rarefaction, and viscous 2-shock (case (13)).\\
	\end{enumerate}

	Observe that (I) is a special case of (II). Therefore, the essentially remaining case is the case (13), the superposition of all the possible elementary waves (see the right of Figure \ref{fig:classification}), which is the main object of the present paper. On the other hand, the cases of $U_-\in \Omega_{\textup{super}}$ remain mostly open. The only related work is \cite{S03} in which the stability of the superposition of 1- and 2-rarefaction was proved, and then it was improved by \cite{FLWZ14}, allowing the large perturbation.\\
 	

	\subsection{Ideas of proof}
	The main strategy for the proof is based on the method of $a$-contraction with shifts (in short, the $a$-contraction method), which provides a key energy estimate for the stability of perturbations around a shock measured by the relative entropy (see for example \cite{Kang19,Kang-V-1,KV16,KV21,KV-Inven}). Since the standard $L^2$-energy method was used for the stability of boundary layers and rarefaction waves as in \cite{MN86,MN01}, the $a$-contraction method can serve as a unified approach for analyzing the stability of superpositions consisting of a boundary layer, a rarefaction and a shock at the same framework generated by the relative entropy.
	Indeed, in \cite{KVW23}, they use the $a$-contraction method to prove the time-asymptotic stability of the superposition of the rarefaction and shock for the NS system  (see also \cite{KVW-NSF} for the NSF system). 
	 In the proof, the $a$-contraction method will be used to obtain the energy estimates for the superposition of the boundary layer, the viscous shock wave, and the approximate rarefaction, where the approximate rarefaction is a smooth approximation of the (inviscid) rarefaction.
	Then, the major difficulties we will face in the energy estimates are applying the Poincar\'e-type inequality and controlling the magnitudes of the boundary values and of the interaction terms between the three different waves (or layers). Notice that the wave interaction is due to the nonlinearity of the NS system.
	
	First, the leading order terms in \eqref{leading_order} are localized by a derivative of weight and viscous shock. Those terms are controlled in Lemma \ref{lem:leading} by using the Poincar\'e-type inequality of Lemma \ref{lem:Poincare} with the change of variable $x\mapsto y$, where the $y$ variable is defined by the viscous shock in \eqref{ydef}. However, the image $[y_0,1]$ of the $y$ variable depends on time, contrary to the whole space $\bbr$ as in \cite{KV21,KVW23}, since  the viscous shock connects the left state $U^*$ at $x=-\infty$ to the right state $U_+$ at $x=+\infty$ whereas the solution stays on the half-space $\bbr_+$. 
	To overcome it, we use the smallness of $|X(t)|$ to get $0<y_0\le  Ce^{-C\delta_S\beta}$ where $\delta_S$ denotes the shock strength, by which $y_0 \approx 0$ for $\beta$ large enough.  
	
	For the boundary terms arising from the integration-by-parts, since the approximate rarefaction is defined such that it achieves the exact constant state $U_*$ at $\xi=0$, the boundary terms are substantially related to the discrepancy between the left-end state $U^*$ and the value at $\xi=0$ of viscous shock. In addition, since the speed of dynamical shift is small enough as the $L^\infty$-norm of perturbation, we make the boundary terms small enough by choosing $\beta$ sufficiently large.

	The wave interaction terms are the last term of $J^{\textup{bad}}$ in Lemma \ref{lem:rel} for the zeroth-order estimate, and the last term $\mathcal{I}_6$ of \eqref{est_rel_ent_H1} for the first-order estimate.
	To control the interaction term at the zeroth-order estimate, we decompose it into the three combinations $R_1, R_2, R_3$ as in \eqref{SI1 decomposition}. To control them, we first obtain the  time-decay estimates for wave interactions as in Lemma \ref{lem:wave-interaction}. Note that, in Lemma \ref{lem:wave-interaction}, the first two upper bounds for the interactions with the boundary layer are not time-integrable. However, since the terms $R_1, R_2$ are the integrals of the wave interaction multiplied by the perturbation, the terms $R_1, R_2$ can be controlled by the diffusion $D_{u_1}$ and the time-integrable function as in \eqref{r1}-\eqref{r2}.
	For the first-order estimate, it is easier to use the effective velocity $h$ to control the interaction estimates. As in the estimate \eqref{i62}, we handle the interaction term $\mathcal{I}_{62}$. In order to show that the wave interaction $(R_1+R_2+R_3)^2$ is integrable in space and time, and its norm is small and controlled by the wave strength, we need the delicate estimate as in Lemma \ref{L2 wave interaction}.  \\

	The rest of the paper is organized as follows. In Section \ref{sec:prelim}, we gather several important properties of the elementary waves, as well as some technical inequalities on the relative quantities, which will be used throughout the paper. Section \ref{sec:3} presents an a priori estimate on the $H^1$-perturbation, from which we deduce the proof of Theorem \ref{thm:main}. In Section \ref{sec:4} and Section \ref{sec:high-order}, we present the detailed proof of a priori estimates. Finally, Appendix \ref{app:A} and Appendix \ref{app:B} contain proofs of the global existence of the solution and time-asymptotic behavior, respectively.
	
	\section{Preliminaries}\label{sec:prelim}
	\setcounter{equation}{0}
	In this section, we provide several useful properties of the elementary waves, such as the decay estimates and the lower and upper bounds on the relative quantities.
	
	\subsection{Boundary layer solution}
	For a given $U_-=(v_-,u_-)\in\Omega_\textup{sub}$ and $U_*=(v_*,u_*)\in BL(U_-)$, the existence of a boundary layer solution $U^{BL}(\xi)=(v^{BL}(\xi),u^{BL}(\xi))$, that is, the solution to \eqref{eq:BL}, was shown in \cite[Lemma 1.1]{MN01}. In particular, the following lemma presents the decay estimate of the degenerate BL solution.

	\begin{lemma}\cite{MN01}\label{lem:boundary_layer}
		Suppose $U_*\in BL(U_-)\cap \Gamma_{\textup{trans}}$ and let $\delta_{BL}:=|(v_--v_*,u_--u_*)|$. Then there exists a boundary layer solution $(v^{BL}(\xi),u^{BL}(\xi))$ to \eqref{eq:BL} that satisfies
		\begin{align}
		\begin{aligned}\label{BL_properties}
		&v^{BL}_\xi,\,u^{BL}_\xi>0,\\
		&|(v^{BL}(\xi),u^{BL}(\xi))-(v_*,u_*)|\le \frac{C\delta_{BL}}{1+\delta_{BL}\xi},\\
		&|(v^{BL}_{\xi},u^{BL}_{\xi})|\le \frac{C\delta^2_{BL}}{(1+\delta_{BL}\xi)^2},\\
		&|(v^{BL}_{\xi\xi},u^{BL}_{\xi\xi})|\le \frac{C\delta_{BL}}{1+\delta_{BL}\xi}|(v^{BL}_\xi,u^{BL}_\xi)|,\quad |(v^{BL}_{\xi\xi},u^{BL}_{\xi\xi})|\le \frac{C\delta^3_{BL}}{(1+\delta_{BL}\xi)^3}.
		\end{aligned}
		\end{align}
	\end{lemma}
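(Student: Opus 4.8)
The plan is to reduce the boundary-layer system \eqref{eq:BL} to a single scalar first-order ODE and read off all the bounds from a Riccati-type comparison near a degenerate equilibrium. First I would integrate the continuity equation in \eqref{eq:BL} and use $U^{BL}(0)=U_-$ to get $u^{BL}_\xi=-\sigma_- v^{BL}_\xi$, hence $u^{BL}=u_--\sigma_-(v^{BL}-v_-)$ (so $(v^{BL},u^{BL})$ stays on $BL(U_-)$). Substituting this into the momentum equation and integrating once from $\xi$ to $+\infty$, using $v^{BL}_\xi\to0$ and $v^{BL}\to v_*$, gives
\[
v^{BL}_\xi=F(v^{BL}):=\frac{v^{BL}}{|\sigma_-|}\bigl(G(v^{BL})-G(v_*)\bigr),\qquad G(v):=\sigma_-^2 v+p(v),\qquad v^{BL}(0)=v_-.
\]
All assertions about $(v^{BL},u^{BL})$ then follow from the corresponding scalar statements for $v^{BL}$, using $u^{BL}_\xi=-\sigma_- v^{BL}_\xi$ and $u^{BL}_{\xi\xi}=-\sigma_- v^{BL}_{\xi\xi}$.

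The two structural facts driving everything are: $G$ is strictly convex, since $G''=p''>0$; and $v_*$ is a \emph{degenerate} zero of $F$, because $U_*\in\Gamma_{\textup{trans}}\cap BL(U_-)$ forces $\sigma_-^2=u_*^2/v_*^2=\gamma v_*^{-\gamma-1}=-p'(v_*)$, so $G'(v_*)=\sigma_-^2+p'(v_*)=0$ and $F(v)=\tfrac{v}{|\sigma_-|}(G(v)-G(v_*))$ vanishes to exactly second order at $v_*$. Consequently $G(v)-G(v_*)\ge0$ with equality only at $v=v_*$, and since the subsonic condition $u_-<c(v_-)$ places $U_-$ strictly below $U_*$ on $BL(U_-)$ (i.e.\ $v_-<v_*$), we have $F>0$ on $(v_-,v_*)$. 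Integrating the ODE from $v_-$, the profile $v^{BL}$ is strictly increasing and converges to $v_*$ as $\xi\to\infty$ (it cannot reach $v_*$ in finite $\xi$ since $\int^{v_*}dv/F(v)=\infty$), which gives existence and the positivity $v^{BL}_\xi=F(v^{BL})>0$, $u^{BL}_\xi=|\sigma_-|v^{BL}_\xi>0$ in \eqref{BL_properties}.

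Set $w:=v_*-v^{BL}\in(0,\,v_*-v_-]$, so $w_\xi=-F(v_*-w)$. Using $G'(v_*)=0$ together with the compactness of $[v_-,v_*]$ I would establish the two-sided bound
\[
c_1 w^2\le F(v_*-w)\le c_2 w^2,\qquad 0<w\le v_*-v_-,
\]
with $c_1,c_2>0$ depending only on $(U_-,U_*)$: the lower bound from $G(v_*-w)-G(v_*)\ge\tfrac12\bigl(\min_{[v_-,v_*]}G''\bigr)w^2$ and $v_*-w\ge v_-$; the upper bound from $|G'(v_*-w)|\le\bigl(\max_{[v_-,v_*]}G''\bigr)w$ (since $G'(v_*)=0$), hence $G(v_*-w)-G(v_*)\le\tfrac12\bigl(\max_{[v_-,v_*]}G''\bigr)w^2$, together with $v_*-w\le v_*$. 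Then $c_1 w^2\le-w_\xi\le c_2 w^2$, and integrating $\tfrac{d}{d\xi}(1/w)=-w_\xi/w^2$ yields
\[
\frac{w(0)}{1+c_2 w(0)\xi}\le w(\xi)\le\frac{w(0)}{1+c_1 w(0)\xi}.
\]
Since $w(0)=v_*-v_-$ and $\delta_{BL}=\sqrt{1+\sigma_-^2}\,(v_*-v_-)$ are comparable, this is precisely the first estimate of \eqref{BL_properties}, and $v^{BL}_\xi=F(v^{BL})$, squeezed by $c_1w^2\le F\le c_2 w^2$, gives the second one. For the higher derivatives, differentiate once more: $v^{BL}_{\xi\xi}=F'(v^{BL})\,v^{BL}_\xi$, and the elementary bound $|F'(v)|\le C\,(v_*-v)$ on $[v_-,v_*]$ — which again uses $G'(v_*)=0$, so that both $G(v)-G(v_*)$ and $vG'(v)$ are $O(v_*-v)$ there — gives $|v^{BL}_{\xi\xi}|\le\frac{C\delta_{BL}}{1+\delta_{BL}\xi}\,|v^{BL}_\xi|$ and, combined with the previous bound, $|v^{BL}_{\xi\xi}|\le\frac{C\delta_{BL}^3}{(1+\delta_{BL}\xi)^3}$. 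The $u$-components follow verbatim from $u^{BL}_\xi=-\sigma_-v^{BL}_\xi$ and $u^{BL}_{\xi\xi}=-\sigma_-v^{BL}_{\xi\xi}$.

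The main obstacle is handling the degenerate equilibrium correctly: a hyperbolic fixed point would give exponential decay, whereas here one must extract the algebraic rate from the precise second-order vanishing of $F$ at $v_*$, i.e.\ from the identity $G'(v_*)=0$ that encodes $U_*\in\Gamma_{\textup{trans}}$. A secondary point is that the constants $c_1,c_2,C$ must be kept uniform all the way down to $v_-$ although no smallness of $\delta_{BL}$ is assumed; this causes no trouble because $w$ ranges over the bounded interval $(0,v_*-v_-]$, on which $w^2\le(v_*-v_-)\,w$, so the quadratic control never degrades. The remaining computations are the routine Riccati comparison above.
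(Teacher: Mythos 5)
Your proposal is correct, and the essential mechanism behind the derivative bounds is the same one the paper uses: the transonic condition forces $\sigma_-^2=-p'(v_*)$, so the integrated momentum equation $u^{BL}_\xi/v^{BL}=-\sigma_-(u^{BL}-u_*)+p(v^{BL})-p(v_*)$ becomes $v^{BL}\bigl(p(v^{BL})-p(v_*)-p'(v_*)(v^{BL}-v_*)\bigr)$, quadratic in $v^{BL}-v_*$; your $G$ and $F$ are just a systematic packaging of this. Where you diverge from the paper is in scope: the paper cites \cite{MN01} for existence, monotonicity, and the first two decay estimates, and proves only the third and fourth from them, whereas your Riccati argument on the scalar ODE $v^{BL}_\xi=F(v^{BL})$ (squeeze $c_1w^2\le -w_\xi\le c_2w^2$, integrate $\tfrac{d}{d\xi}(1/w)$) reconstructs all of \eqref{BL_properties} from scratch. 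This buys you a self-contained proof at the cost of about a page, and it also makes transparent exactly where the algebraic decay (rather than exponential, as for the non-degenerate shock) comes from, namely the double zero of $F$ at $v_*$. Two small points of bookkeeping: the observation that $v_-<v_*$ does indeed follow from the strict subsonicity of $U_-$ together with monotonicity of $-p'$, as you say; and the closing remark about $w^2\le(v_*-v_-)w$ is not actually needed — the two-sided quadratic bound on $F$ is already uniform on $[v_-,v_*]$ because $\min_{[v_-,v_*]}G''>0$ on that compact interval — but it does no harm.
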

	\begin{proof}
	Since the first and second estimates are already proved in \cite{MN01}, we focus on proving the third and fourth estimates. By integrating \eqref{eq:BL} over $[\xi,\infty)$, one can observe that $(v^{BL},u^{BL})$ satisfies
	\begin{align*}
	&\sigma_-(v^{BL}-v_*)+(u^{BL}-u_*)=0,\\
	&\frac{u_{\xi}^{BL}}{v^{BL}}=-\sigma_-(u^{BL}-u_*)+p(v^{BL})-p(v_*),
	\end{align*}
	with 
	\[\sigma_- = -\frac{u^{BL}(\xi)}{v^{BL}(\xi)} = -\frac{u_-}{v_-} = -\frac{u_*}{v_*}.\]
	Since $(v_*,u_*)\in \Gamma_{\textup{trans}}$, we have
	\[u_* = c(v_*)=v_*\sqrt{-p'(v_*)}\quad\mbox{therefore}\quad \sigma_-^2 = -p'(v_*).\]
	This gives
	\begin{align*}
	|p(v^{BL})-p(v_*)-\sigma_-(u^{BL}-u_*)| &= |p(v^{BL})-p(v_*)+\sigma_-^2(v^{BL}-v_*)|\\
	&=|p(v^{BL})-p(v_*)-p'(v_*)(v^{BL}-v_*)|\le C|v^{BL}-v_*|^2.
	\end{align*}
	Therefore, using the second estimate, we conclude that
	\[|(v_{\xi}^{BL},u_{\xi}^{BL})|\le \frac{C\delta_{BL}^2}{(1+\delta_{BL}\xi)^2}.\]
	Similarly, we use
	\begin{align*}
	u^{BL}_{\xi\xi} &= \left(v^{BL}\left(p(v^{BL})-p(v_*)-p'(v_*)(v^{BL}-v_*)\right)\right)_\xi\\
	&=v^{BL}_\xi\left(p(v^{BL})-p(v_*)-p'(v_*)(v^{BL}-v_*)\right)+v^{BL}\left((p'(v^{BL})-p'(v_*))v^{BL}_\xi\right)
	\end{align*}
	to observe that
	\[|(v^{BL}_{\xi\xi},u^{BL}_{\xi\xi})|\le \frac{C\delta_{BL}}{1+\delta_{BL}\xi}|(v^{BL}_\xi,u^{BL}_\xi)|, \quad |(v^{BL}_{\xi\xi},u^{BL}_{\xi\xi})|\le \frac{C\delta^3_{BL}}{(1+\delta_{BL}\xi)^3}.\]
	\end{proof}
	\subsection{Rarefaction wave}
	Next, we consider a smooth approximation of the 1-rarefaction wave connecting $U_*=(v_*,u_*)\in\Gamma_{\textup{trans}}$ and $U^*=(v^*,u^*)\in R_1(U_*)$, as in \cite{MN01}. To this end, let $w(t,x)=w(t,x;w_-,w_+)$ be a unique global smooth solution to the Burgers' equation
	\[w_t + ww_x=0,\quad t>0,\quad x\in \R,\]
	subject to the smooth initial data
	\[w(0,x):=\frac{w_++w_-}{2}+\frac{w_+-w_-}{2}\tanh x.\]
	Then, we define a smooth approximation of the rarefaction wave $(\tv^R,\tu^R)$ on the whole space $\R$ as
	\begin{align}
	\begin{aligned}\label{eq:tvrtur}
		&\tv^R(t,x) = \lambda_1^{-1}(w(t,x;2\lambda_1(v_*)-\lambda_1(v^*),\lambda_1(v^*))),\\
		&\tu^R(t,x) = u_*-\int_{v_*}^{\tv^R(t,x)}\lambda_1(s)\,ds,
	\end{aligned}
	\end{align}	
	where $\lambda_1(v)=-\sqrt{-p'(v)}$ is the first characteristic speed. Then, we define a smooth approximation of the rarefaction wave on the half-space $\R_+$ as a restriction of $(\tv^R,\tu^R)$:
	\[
    U^R(t,\xi)=(v^R,u^R)(t,\xi) := (\tv^R,\tu^R)(t,\xi+\sigma_-t).
    \]
	Then, $U^R$ satisfies the following properties.
	\begin{lemma}\cite{MN01}\label{lem:rarefaction}
		Let $\delta_R:=|(v_*-v^*,u_*-u^*)|$. Then, the smooth approximation $(v^R,u^R)$ of the rarefaction wave satisfies
		\begin{align}
			\begin{aligned}\label{eq:rarefaction}
				&v^R_t -\sigma_-v^R_\xi-u^R_\xi=0,\quad \xi>0,\quad t>0,\\
				&u^R_t - \sigma_-u^R_\xi+p(v^R)_\xi=0
			\end{aligned}
		\end{align}
		with
		\[(v^R,u^R)(t,0)=(v_*,u_*),\quad (v^R,u^R)(t,\infty)=(v^*,u^*).\]
		Moreover, it satisfies the following properties: for any $p \in [1,\infty]$,
		\begin{align}
		\begin{aligned}\label{rarefaction_properties}
		&v^R_\xi, u^R_\xi>0,\quad v^R_\xi\le Cu^R_\xi\le C\delta_R,\\
		&\|(v^R_\xi,u^R_\xi)\|_{L^p(\R_+)}\le \frac{C\delta_R^{1/p}}{(1+t)^{1-1/p}},\quad \|(v^R_{\xi\xi},u^R_{\xi\xi})\|_{L^p(\R_+)}\le C\min\left\{\delta_R,\frac{1}{1+t}\right\},\\
		&|(v^R-v^*,u^R-u^*)(t,\xi)|+|(v^R_\xi,u^R_\xi)(t,\xi)|\le C\delta_R\exp[-C(|\xi+\sigma_-t|+t)],\quad \xi\ge -\sigma_-t.
		\end{aligned}
		\end{align}
		Finally, we have
		\[\lim_{t\to\infty}\sup_{\xi\in\R_+}\left|(v^R,u^R)(t,\xi)-(v^r,u^r)\left(\frac{\xi+\sigma_-t}{t}\right)\right|=0\]
		where $(v^r,u^r)$ is the rarefaction wave generated by $U_*$ and $U^*$ on the whole space given in \eqref{eq:rarefaction_whole}.
	\end{lemma}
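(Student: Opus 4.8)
The plan is to reduce everything to the classical properties of the smoothed Burgers rarefaction $w(t,x)$ and then transfer them through the smooth, strictly monotone change of variable $\lambda_1^{-1}$. First I would record the standard facts about $w$. Its initial datum is smooth and strictly increasing with range $(w_-,w_+)$ and $\|w_x(0,\cdot)\|_{L^\infty}\le C|w_+-w_-|=C|\lambda_1(v^*)-\lambda_1(v_*)|\le C\delta_R$; since the datum is increasing, no shock forms and $w(t,\cdot)$ stays smooth and strictly increasing with $w_-<w<w_+$ and $w_x>0$. From the characteristic representation $x=x_0+w(0,x_0)t$, $w(t,x)=w(0,x_0)$, one then obtains $0<w_x\le C\min\{\delta_R,1/(1+t)\}$, the $L^p$ bounds $\|w_x(t,\cdot)\|_{L^p(\R)}\le C\delta_R^{1/p}(1+t)^{-1+1/p}$ and $\|w_{xx}(t,\cdot)\|_{L^p(\R)}\le C\min\{\delta_R,1/(1+t)\}$, and the exponential localization $|w_x|+|w-w_\pm|\le C\delta_R\exp[-C(|x-w_\pm t|+t)]$ outside the fan $\{w_-t\le x\le w_+t\}$. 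These are precisely the estimates in the Matsumura--Nishihara smoothing construction; they may be quoted from \cite{MN01}, or proved from scratch as an elementary computation with the characteristics.

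Second, I would transfer these to $\tv^R=\lambda_1^{-1}(w)$ and $\tu^R=u_*-\int_{v_*}^{\tv^R}\lambda_1$. On the relevant compact $v$-range, $\lambda_1$ is a smooth, strictly increasing diffeomorphism (as $\lambda_1'=p''/(2\sqrt{-p'})>0$) with $\lambda_1'$ bounded above and below, so $\tv^R_x,\tu^R_x>0$ is immediate; from $\tu^R_x=-\lambda_1(\tv^R)\tv^R_x$ with $-\lambda_1$ bounded below by a positive constant we get $\tv^R_x\le C\tu^R_x$, and $\tu^R_x\le C\delta_R$ follows from $w_x\le C\delta_R$. The $L^p$ and exponential bounds carry over, the only new contribution in the second derivatives being the square $(w_x)^2$ of the first derivative, which is absorbed since $w_x\le C\min\{\delta_R,1/(1+t)\}$; restricting via $x=\xi+\sigma_-t$ then yields \eqref{rarefaction_properties}. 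The exactness of the boundary value is a feature of the degenerate-layer geometry: since $U_*\in BL(U_-)\cap\Gamma_{\textup{trans}}$ forces $-\sigma_-=u_*/v_*=\sqrt{-p'(v_*)}=-\lambda_1(v_*)$, i.e. $\sigma_-=\lambda_1(v_*)$, and the $\tanh$ datum is centered so that $w(0,0)=\frac{w_-+w_+}{2}=\lambda_1(v_*)$, the characteristic through the origin gives $w(t,\lambda_1(v_*)t)=\lambda_1(v_*)$ for all $t$; hence $v^R(t,0)=\lambda_1^{-1}(w(t,\sigma_-t))=v_*$ and $u^R(t,0)=u_*$.

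Third is the direct verification of the PDE and the limits. From $w_t+ww_x=0$ and $w=\lambda_1(\tv^R)$ we get $\tv^R_t+\lambda_1(\tv^R)\tv^R_x=0$, i.e. $\tv^R_t-\tu^R_x=0$; differentiating $\tu^R$ in $t$ and using $\lambda_1^2=-p'$ gives $\tu^R_t=(\lambda_1(\tv^R))^2\tv^R_x=-p(\tv^R)_x$, so $\tu^R_t+p(\tv^R)_x=0$; substituting $x=\xi+\sigma_-t$ produces exactly \eqref{eq:rarefaction}, and the far-field state follows from $w\to w_+=\lambda_1(v^*)$. For the uniform convergence to the inviscid profile, I would compare $w(t,\cdot)$ with the Riemann expansion fan $w^{\mathrm r}(x/t)=\min\{\max\{x/t,w_-\},w_+\}$: the standard estimate gives $\sup_x|w(t,x)-w^{\mathrm r}(x/t)|\to0$ as $t\to\infty$, the only discrepancy being an $O(1/t)$ layer near the two corners plus the exponentially small tails. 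Applying $\lambda_1^{-1}$, using \eqref{eq:rarefaction_whole}, and substituting $x=\xi+\sigma_-t$ — and noting that $(\xi+\sigma_-t)/t\ge\lambda_1(v_*)>w_-$ for $\xi\ge0$, so the truncation of $w^{\mathrm r}$ matches that defining $v^r$ on the half-line — gives the claimed limit.

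In my view the only real difficulty is bookkeeping, not concept: obtaining the $L^p$ bounds with the correct joint dependence on $t$ and $\delta_R$ — in particular the two-sided estimate $\|(v^R_{\xi\xi},u^R_{\xi\xi})\|_{L^p}\le C\min\{\delta_R,1/(1+t)\}$, whose small-time regime is controlled by the $O(\delta_R)$ bound on the datum and whose large-time regime is controlled by the Burgers decay, glued uniformly — and checking that every constant, including the one in $v^R_\xi\le Cu^R_\xi\le C\delta_R$ and in all exponential tails, is independent of $\delta_R$ and $t$. Since all of this is contained in \cite{MN01}, one may alternatively simply cite it; I would nonetheless include the short chain-rule transfer and the identity $\sigma_-=\lambda_1(v_*)$, the latter being what makes the boundary value exact in the present degenerate-boundary-layer setting.
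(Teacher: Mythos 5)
Your proposal is correct and reconstructs the argument of \cite{MN01}, which the paper simply cites without proof; the reduction to the smoothed Burgers equation, the transfer of derivative bounds through the diffeomorphism $\lambda_1^{-1}$, the identity $\sigma_-=\lambda_1(v_*)$ (from $U_*\in BL(U_-)\cap\Gamma_{\textup{trans}}$) together with the centering of the $\tanh$ datum explaining the exact boundary value $(v^R,u^R)(t,0)=(v_*,u_*)$, and the uniform convergence to the Riemann fan are all the standard Matsumura--Nishihara steps. The points you flag as mere bookkeeping (the joint $\delta_R$-and-$t$ dependence in the $L^p$ bounds, absorbing the $(w_x)^2$ term in the second-derivative estimate) are indeed the only delicate parts and are handled exactly as you indicate.
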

	
	\begin{remark}
		At first glance, the choice of $(\tv^R,\tu^R)$ in \eqref{eq:tvrtur} is not natural since the usual choice of approximation of rarefaction wave $v^r$ is
		\[\lambda^{-1}(w(t,x;\lambda_1(v_*),\lambda_1(v^*))).\]
		However, as pointed out in \cite{MN01}, the choice of $\tv^R$ in \eqref{eq:tvrtur} gives us the correct boundary condition $(v^R,u^R)(t,0)=(v_*,u_*)$.
	\end{remark}

	\subsection{Viscous shock wave}
	Finally, we consider the far-field state $U_+=(v_+,u_+)\in S_2(U^*)$ satisfying the following Rankine-Hugoniot conditions
	\begin{align*}
		&-\sigma (v_+-v^*)-(u_+-u^*)=0,\\
		&-\sigma (u_+-u^*)+(p(v_+)-p(v^*))=0,
	\end{align*}
	and the entropy conditions $v^*<v_+$ and $u^*>u_+$. Then, it is well-known that there exists a viscous shock profile $U^S=(v^S,u^S)(\zeta)$ with $\zeta = x-\sigma t=\xi -(\sigma-\sigma_-)t$ satisfying \eqref{eq:shock}.
	Note that we consider the viscous 2-shock wave defined on the whole space $\R$.
	\begin{lemma}\cite{MN85}\label{lem:viscous_shock}
		Let $\delta_S:=|(v^*-v_+,u^*-u_+)|$. Then, there exists a viscous shock profile $(v^S,u^S)$ satisfying the following estimates:
	\begin{align}
	\begin{aligned}\label{shock-properties}
		& v^S_\zeta>0,\quad u^S_\zeta<0,\\
		& |v^S(\zeta)-v^*|,|u^S(\zeta)-u^*|\le C\delta_S e^{-C\delta_S|\zeta|},\quad \zeta<0,\\
		& |v^S(\zeta)-v_+|,|u^S(\zeta)-u_+|\le C\delta_S e^{-C\delta_S|\zeta|},\quad \zeta>0,\\
		&|(v^S_\zeta,u^S_\zeta)|\le C\delta_S^2e^{-C\delta_S|\zeta|},\quad |(v^S_{\zeta\zeta},u^S_{\zeta\zeta})|\le C\delta_S|(v^S_\zeta,u^S_\zeta)|,\quad \zeta \in \R.
	\end{aligned}
	\end{align}
	\end{lemma}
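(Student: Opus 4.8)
The plan is to reduce the profile system \eqref{eq:shock} to a single scalar autonomous ODE and analyze it by phase-line arguments. First, integrating the first equation of \eqref{eq:shock} and using the far-field limit as $\zeta\to-\infty$ together with the Rankine--Hugoniot relation gives $u^S-u^*=-\sigma(v^S-v^*)$, so the entire profile is determined by $v^S$. Substituting $(u^S)'=-\sigma(v^S)'$ into the second equation and integrating once from $-\infty$ yields
\begin{equation*}
\frac{(v^S)'}{v^S}=-\frac{1}{\sigma}\,h(v^S),\qquad h(v):=\sigma^2(v-v^*)+p(v)-p(v^*).
\end{equation*}
By the Rankine--Hugoniot condition, $h(v^*)=h(v_+)=0$, and since $p$ is strictly convex, $h$ is strictly convex, so $h<0$ on the open interval $(v^*,v_+)$. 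Hence the scalar ODE possesses a unique (up to translation in $\zeta$) monotone heteroclinic orbit $v^S(\zeta)$ increasing from $v^*$ to $v_+$, which immediately gives $v^S_\zeta>0$ and, through the linear relation, $u^S_\zeta=-\sigma v^S_\zeta<0$. This establishes existence, monotonicity, and uniqueness up to a shift.

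For the exponential decay, I would linearize the scalar ODE at the two endpoints: near $v^*$, $(v^S)'\approx -\tfrac{v^*}{\sigma}h'(v^*)(v^S-v^*)$ with $h'(v^*)=\sigma^2+p'(v^*)$, and near $v_+$ similarly with $h'(v_+)=\sigma^2+p'(v_+)$. The Lax entropy conditions for a 2-shock, $\sqrt{-p'(v_+)}<\sigma<\sqrt{-p'(v^*)}$, give $h'(v^*)<0<h'(v_+)$, so both equilibria are hyperbolic with the correct stable/unstable configuration, and the orbit approaches each exponentially. A Taylor expansion shows that $|h'(v^*)|$, $|h'(v_+)|$, and $|v_+-v^*|$ are all comparable to $\delta_S$, so the decay exponents have size $C\delta_S$; combined with the a priori amplitude bounds $|v^S-v^*|\le|v_+-v^*|\le C\delta_S$ on $\zeta<0$ and the analogous bound by $v_+$ on $\zeta>0$, this produces $|v^S-v^*|,|v^S-v_+|\le C\delta_S e^{-C\delta_S|\zeta|}$, and the same for $u^S$ via $u^S-u^*=-\sigma(v^S-v^*)$.

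The derivative bounds then follow by differentiating the scalar ODE. From $(v^S)'=-\tfrac{v^S}{\sigma}h(v^S)$ and $|h(v^S)|\le C|v^S-v^*|\,|v^S-v_+|\le C\delta_S^2 e^{-C\delta_S|\zeta|}$ one gets $|(v^S)'|\le C\delta_S^2 e^{-C\delta_S|\zeta|}$, and differentiating once more, $(v^S)''=-\tfrac{(v^S)'}{\sigma}\bigl(h(v^S)+v^S h'(v^S)\bigr)$ with $|h(v^S)+v^S h'(v^S)|\le C\delta_S$ gives $|(v^S)''|\le C\delta_S|(v^S)'|$; the bounds for $u^S_\zeta,u^S_{\zeta\zeta}$ are immediate. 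I expect the only delicate point to be tracking the dependence of all constants on $\delta_S$ --- in particular that the decay rate scales like $\delta_S$ rather than being merely positive --- which requires expanding $h$ and $h'$ carefully around $v^*$ and $v_+$ and using that $\sigma^2$ interpolates between $-p'(v_+)$ and $-p'(v^*)$; the phase-line existence argument itself is routine.
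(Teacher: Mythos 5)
The paper does not prove this lemma but cites it to Matsumura--Nishihara \cite{MN85}, and your reduction of \eqref{eq:shock} to the scalar ODE $(v^S)'=-\frac{v^S}{\sigma}h(v^S)$ with $h(v)=\sigma^2(v-v^*)+p(v)-p(v^*)$, followed by the phase-line existence argument, linearization at the hyperbolic rest points (with Lax's entropy inequalities $\sqrt{-p'(v_+)}<\sigma<\sqrt{-p'(v^*)}$ giving $h'(v^*)<0<h'(v_+)$), and the Taylor expansion showing $|h'(v^*)|,|h'(v_+)|\sim\delta_S$, is precisely the standard argument used there. Your derivation of the derivative estimates from $|h(v^S)|\le C|v^S-v^*||v^S-v_+|$ and from $(v^S)''=-\frac{(v^S)'}{\sigma}\bigl(h(v^S)+v^Sh'(v^S)\bigr)$ is likewise correct, so the proposal is sound and faithful to the cited proof.
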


	\subsection{Estimates on the relative quantities}
	We now present several estimates on the relative quantities. For a differentiable function $F:\R\to \R$, we define its relative quantity as
	\[F(v|w):=F(v)-F(w)-F'(w)(v-w).\]
	In particular, when $F$ is convex, then the relative quantity is always positive. In the following lemma, we gather some important estimates on the relative quantities for the pressure $p(v)=v^{-\gamma}$ and the internal energy $Q(v)=v^{1-\gamma}/(\gamma-1)$.
	\begin{lemma}\cite{KV21}\label{lem:relative_quantity}
		Let $\gamma>1$ and $v^*$ be given constants. Then, there exist constants $C$ and $\delta_*>0$ such that the following estimates hold.
		\begin{enumerate}
			\item For any $v,w$ such that $0<w<2v^*$ and $0<v<3v^*$,
			\[|v-w|^2\le CQ(v|w),\quad |v-w|^2\le Cp(v|w).\]
			\item For any $v,w>v^*/2$, $|p(v)-p(w)|\le C|v-w|$.
			\item For any $0<\delta<\delta_*$ and $v,w>0$ satisfying $|v-w|<\delta$, 
			\begin{align}
			\begin{aligned}\label{relative-est}
			&p(v|w)\le\left(\frac{\gamma+1}{2\gamma}\frac{1}{p(w)}+C|p(v)-p(w)|\right)|p(v)-p(w)|^2,\\
			&Q(v|w)\le \left(\frac{p(w)^{-\frac{1}{\gamma}-1}}{2\gamma}+C|p(v)-p(w)|\right)|p(v)-p(w)|^2,\\
			&Q(v|w)\ge\frac{p(w)^{-\frac{1}{\gamma}-1}}{2\gamma}|p(v)-p(w)|^2-\frac{1+\gamma}{3\gamma^2}p(w)^{-\frac{1}{\gamma}-2}|p(v)-p(w)|^3.
			\end{aligned}
			\end{align}
		\end{enumerate}
	\end{lemma}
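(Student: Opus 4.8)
The plan is to prove the three parts in turn, each time reducing to a Taylor expansion with a quantitatively controlled remainder; the structural fact I would lean on throughout is the identity $Q'(v)=-p(v)$, equivalently $Q''(v)=-p'(v)=\gamma v^{-\gamma-1}>0$, so that $p$ and $Q$ are smooth and strictly convex on $(0,\infty)$. Parts (1) and (2) are then immediate from the mean value theorem. For (2), write $|p(v)-p(w)|=|p'(\eta)|\,|v-w|$ for some $\eta$ between $v$ and $w$; since $v,w>v^*/2$ we have $\eta>v^*/2$ and hence $|p'(\eta)|=\gamma\eta^{-\gamma-1}\le\gamma(v^*/2)^{-\gamma-1}=:C$. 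For (1), use the integral form of the relative quantity, $F(v|w)=(v-w)^2\int_0^1(1-t)\,F''\big(w+t(v-w)\big)\,dt$ for $F\in\{p,Q\}$; under $0<w<2v^*$, $0<v<3v^*$ the point $w+t(v-w)$ stays in $(0,3v^*)$, and since $p''(v)=\gamma(\gamma+1)v^{-\gamma-2}$ and $Q''(v)=\gamma v^{-\gamma-1}$ are positive and decreasing, they are bounded below on $(0,3v^*]$ by $p''(3v^*)>0$ and $Q''(3v^*)>0$ respectively. This gives $F(v|w)\ge c\,(v-w)^2$ with $c=\tfrac12\min\{p''(3v^*),Q''(3v^*)\}$, i.e. $|v-w|^2\le C\,F(v|w)$ with $C=1/c$.

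For part (3) I would pass to using $p$ as the independent variable. Set $\pi=p(v)$, $\pi_0=p(w)$, so that $v=\pi^{-1/\gamma}$, $w=\pi_0^{-1/\gamma}$, and introduce $s:=(\pi-\pi_0)/\pi_0=(p(v)-p(w))/p(w)$. Using $p'(w)=-\gamma w^{-\gamma-1}$ and $Q'(w)=-p(w)$ — the latter yielding the convenient form $Q(v|w)=Q(v)-Q(w)+p(w)(v-w)$ — a direct computation collapses both relative quantities to functions of $s$ alone:
\[p(v|w)=p(w)\Big[s+\gamma\big((1+s)^{-1/\gamma}-1\big)\Big],\]
\[Q(v|w)=w^{1-\gamma}\left[\frac{(1+s)^{(\gamma-1)/\gamma}-1}{\gamma-1}+(1+s)^{-1/\gamma}-1\right].\]
By part (2), $|v-w|<\delta$ forces $|p(v)-p(w)|\le C\delta$, so $|s|$ is small once $\delta_*$ is chosen small (uniformly for $v,w$ ranging over the relevant bounded region that stays away from $0$), which legitimizes expanding the binomial factors.

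Expanding about $s=0$, $(1+s)^{-1/\gamma}=1-\tfrac{s}{\gamma}+\tfrac{\gamma+1}{2\gamma^2}s^2+O(s^3)$ and $(1+s)^{(\gamma-1)/\gamma}=1+\tfrac{\gamma-1}{\gamma}s-\tfrac{\gamma-1}{2\gamma^2}s^2+O(s^3)$. Inside both brackets the linear terms cancel — which is just the statement that $p(\cdot|w)$ and $Q(\cdot|w)$ vanish to second order at $v=w$ — leaving $s+\gamma\big((1+s)^{-1/\gamma}-1\big)=\tfrac{\gamma+1}{2\gamma}s^2+O(s^3)$ for the first bracket and $\tfrac{1}{2\gamma}s^2-\tfrac{1+\gamma}{3\gamma^2}s^3+O(s^4)$ for the second. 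Substituting $s=(p(v)-p(w))/p(w)$ back, together with $p(w)=w^{-\gamma}$ and $w^{1-\gamma}=p(w)^{(\gamma-1)/\gamma}$ — so that $w^{1-\gamma}s^2=p(w)^{-1/\gamma-1}(p(v)-p(w))^2$ and $w^{1-\gamma}s^3=p(w)^{-1/\gamma-2}(p(v)-p(w))^3$ — converts the powers of $s$ and $w$ into the advertised powers of $p(w)$, and absorbing the higher-order binomial remainders (which, once $|s|\le\tfrac12$, are bounded by a constant times the first omitted power of $s$) into the terms $C|p(v)-p(w)|$ reproduces the two upper bounds and the lower bound for $Q(v|w)$ exactly as stated.

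The main obstacle will be the precise bookkeeping: one must carry the binomial coefficients of orders two and three exactly, not merely as $O(s^2)$ and $O(s^3)$, in order to recover the \emph{sharp} leading constants $\tfrac{\gamma+1}{2\gamma}$, $\tfrac{1}{2\gamma}$ and the cubic constant $\tfrac{1+\gamma}{3\gamma^2}$; and one must check that the remainder left after the explicitly retained terms has the correct one-sided sign and magnitude to fit each inequality (upper bounds for $p(v|w)$ and $Q(v|w)$, a lower bound for $Q(v|w)$). This last requirement is exactly what fixes the smallness threshold $\delta_*$, via the condition $|s|\le\tfrac12$.
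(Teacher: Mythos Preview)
The paper does not supply a proof of this lemma; it is stated with a citation to \cite{KV21} and used as a black box throughout. So there is no ``paper's own proof'' to compare against.

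Your argument is correct and is the standard way these inequalities are derived. Parts (1) and (2) are exactly as you say: convexity plus the monotone decrease of $p''$ and $Q''$ on $(0,3v^*]$ gives the lower bound needed for (1), and the mean value theorem with $|p'|$ bounded on $[v^*/2,\infty)$ gives (2). For (3), your change of variable $s=(p(v)-p(w))/p(w)$ and the resulting closed forms
\[
p(v|w)=p(w)\big[s+\gamma((1+s)^{-1/\gamma}-1)\big],\qquad
Q(v|w)=w^{1-\gamma}\Big[\tfrac{(1+s)^{(\gamma-1)/\gamma}-1}{\gamma-1}+(1+s)^{-1/\gamma}-1\Big]
\]
are correct, and your second- and third-order binomial coefficients match the sharp constants in the statement. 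Two remarks worth making explicit: first, to pass from $|v-w|<\delta$ to $|s|$ small uniformly you are implicitly assuming $v,w$ lie in a fixed compact subset of $(0,\infty)$ (so that $p(w)$ is bounded away from $0$ and $\infty$); the paper uses the lemma only in that regime, so this is harmless, but the hypothesis ``$v,w>0$'' as written is slightly loose. Second, for the lower bound on $Q(v|w)$ in the case $s>0$, absorbing the $O(s^4)$ remainder genuinely requires the quartic coefficient to be nonnegative; a short computation gives it as $\tfrac{(\gamma+1)(2\gamma+1)}{8\gamma^3}>0$, so the inequality does close. For $s<0$ the slack $2\tfrac{\gamma+1}{3\gamma^2}|s|^3$ coming from replacing $-s^3$ by $-|s|^3$ already dominates the remainder. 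You correctly flag this sign check as the main bookkeeping obstacle.
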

	
	\subsection{Poincar\'e type inequality} One of the main ingredients for attaining the zeroth-order estimate in Lemma \ref{lem:main} is the Poincar\'e type inequality. Here, the optimal constant $\frac{1}{2}$ is independent of the size of the domain.
	\begin{lemma}\label{lem:Poincare}
		\cite{HKKL_pre2} For any $c<d$ and function $f:[c,d] \to \mathbb{R}$ satisfying $\int_c^d (y-c)(d-y)|f'(y)|^2\,dy < \infty$,
		\[
		\int_c^d \left|f(y) - \frac{1}{d-c}\int_c^d f(y)\,dy \right|^2\,dy \leq \frac{1}{2}\int_c^d (y-c)(d-y)|f'(y)|^2\,dy.
		\]
	\end{lemma}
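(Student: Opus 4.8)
\emph{Proof strategy.} The plan is to avoid any spectral machinery and instead derive the bound --- with the sharp constant $\tfrac12$ appearing automatically --- from a ``variance identity'' followed by a single application of the Cauchy--Schwarz inequality. Write $\bar f:=\frac{1}{d-c}\int_c^d f$. First I would record the elementary identity
\[
\int_c^d \Bigl|f(y)-\bar f\Bigr|^2\,dy=\frac{1}{2(d-c)}\int_c^d\!\!\int_c^d\bigl(f(t)-f(s)\bigr)^2\,ds\,dt,
\]
which holds for any $f\in L^2(c,d)$ and follows by expanding the square and using $\int_c^d f=(d-c)\bar f$. Next, writing $f(t)-f(s)=\mathrm{sgn}(t-s)\int_c^d f'(r)\,\mathbf 1_{\{r\text{ lies between }s\text{ and }t\}}\,dr$, squaring, and exchanging the order of integration (Fubini), the $(s,t)$-integration produces only the kernel $\int_{[c,d]^2}\mathbf 1_{\{r,\,r'\text{ between }s,t\}}\,ds\,dt=2\bigl(\min(r,r')-c\bigr)\bigl(d-\max(r,r')\bigr)=:2G(r,r')$, where $G$ is a scalar multiple of the Dirichlet Green's function of $-\partial_r^2$ on $[c,d]$. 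This gives
\[
\int_c^d\Bigl|f(y)-\bar f\Bigr|^2\,dy=\frac{1}{d-c}\int_c^d\!\!\int_c^d G(r,r')\,f'(r)\,f'(r')\,dr\,dr'.
\]

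The key step is then Cauchy--Schwarz applied to the bilinear form on the right with respect to the \emph{nonnegative} measure $G(r,r')\,dr\,dr'$: since $G\ge 0$ on $[c,d]^2$ and $G(r,r')=G(r',r)$,
\[
\int_c^d\!\!\int_c^d G\,f'(r)f'(r')\,dr\,dr'\le\int_c^d\!\!\int_c^d G(r,r')\,f'(r)^2\,dr\,dr'=\int_c^d f'(r)^2\Bigl(\int_c^d G(r,r')\,dr'\Bigr)dr.
\]
Since a direct computation gives $\int_c^d G(r,r')\,dr'=\tfrac12(r-c)(d-r)(d-c)$, the factor $(d-c)$ cancels and one obtains exactly $\tfrac12\int_c^d (r-c)(d-r)|f'(r)|^2\,dr$, which is the asserted inequality. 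The optimality of $\tfrac12$ is visible from the argument: equality in Cauchy--Schwarz forces $|f'|$ to be constant, i.e.\ $f$ affine, and $f(y)=y$ realizes equality (both sides equal $(d-c)^3/12$). Moreover the whole chain is manifestly invariant under affine changes of variables $y\mapsto\alpha y+\beta$, which is precisely why the constant is independent of the interval length, as the statement emphasizes.

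The only delicate point --- which I would single out as the main (if modest) obstacle --- is regularity at the endpoints $y=c,d$, where the weight $(y-c)(d-y)$ degenerates and $f$ need not be bounded. One must verify that the hypothesis $\int_c^d (y-c)(d-y)|f'|^2<\infty$ already forces $f\in L^2(c,d)$, so that the variance identity is meaningful, and that Fubini is legitimate in the second step. Both follow from the same crude estimate: on any compact subinterval of $(c,d)$ one has $f\in H^1$, hence $f$ is absolutely continuous there with $f(t)-f(s)=\int_s^t f'$, and Cauchy--Schwarz gives $\bigl|\int_s^t f'\bigr|\le\bigl(\int_s^t\tfrac{dr}{(r-c)(d-r)}\bigr)^{1/2}\bigl(\int_s^t (r-c)(d-r)|f'|^2\bigr)^{1/2}$, from which $f(y)=O\bigl(\sqrt{\lvert\log\mathrm{dist}(y,\{c,d\})\rvert}\,\bigr)$ near the endpoints --- in particular $f\in L^2(c,d)$ --- and a similar estimate (together with $G(r,r')\le\sqrt{G(r,r)G(r',r')}$) controls the absolute integrability needed for Fubini. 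Alternatively one can prove the inequality first for $f\in C^\infty([c,d])$, where it is immediate from the shifted Legendre expansion $f=\sum_n a_n P_n\bigl(\tfrac{2y-c-d}{d-c}\bigr)$ --- since $\int_c^d |f-\bar f|^2=\sum_{n\ge1}\|P_n\|_{L^2}^2 a_n^2$ while $\int_c^d (y-c)(d-y)|f'|^2=\sum_{n\ge1} n(n+1)\|P_n\|_{L^2}^2 a_n^2\ge 2\sum_{n\ge1}\|P_n\|_{L^2}^2 a_n^2$ by the Legendre differential equation --- and then pass to general $f$ by a routine truncation-and-mollification density argument.
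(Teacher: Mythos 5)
Your proof is correct, and its central mechanism—the variance identity $\int_c^d|f-\bar f|^2=\frac{1}{2(d-c)}\iint|f(t)-f(s)|^2\,ds\,dt$, followed by Fubini to produce the Green's kernel $G(r,r')=(\min(r,r')-c)(d-\max(r,r'))$ and then Cauchy--Schwarz against the nonnegative symmetric measure $G\,dr\,dr'$, with $\int_c^d G(r,r')\,dr'=\tfrac12(r-c)(d-r)(d-c)$—is exactly the standard proof of this weighted Poincar\'e inequality in the $a$-contraction literature (e.g.\ Kang--Vasseur and the cited preprint~[HKKL\_pre2]), so you have not deviated from the source. The paper itself offers no proof, only the citation, so there is nothing internal to compare against; your handling of the endpoint regularity (the $\sqrt{\log}$ bound forcing $f\in L^2$, and integrability of $G\,|f'(r)||f'(r')|$ to legitimize Fubini) is careful and fills in the details that the cited reference also has to supply.
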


	\section{A priori estimate and proof of Theorem \ref{thm:main}}\label{sec:3}
	\setcounter{equation}{0} 
    In this section, we state a priori estimates for the $H^1$-perturbation between the solution and the superposition wave, which constitute the key step in proving Theorem  \ref{thm:main}. The superposition wave $\overline{U}=(\overline{v},\overline{u})$ is constructed as a superposition of the boundary layer solution, the approximate rarefaction wave, and the viscous shock wave shifted by dynamical shift function $X(t)$, which will be defined in \eqref{ODE_X}, and constant shift $\beta$:
    \begin{equation} \label{eq: composite wave}
    \overline{U}(t,\xi)=\begin{pmatrix}
	\overline{v}(t,\xi)\\
	\overline{u}(t,\xi)
	\end{pmatrix}:=\begin{pmatrix}
		v^{BL}(\xi)+v^R(t,\xi)+v^S(\xi-(\sigma-\sigma_-)t-X(t)-\beta)-v_*-v^*\\
	 	u^{BL}(\xi)+u^R(t,\xi)+u^S(\xi-(\sigma-\sigma_-)t-X(t)-\beta)-u_*-u^*
	 \end{pmatrix}.
     \end{equation}
	 For simplicity, we omit the arguments if there is no confusion:
	 \[
	 v^{BL} = v^{BL}(\xi), \quad v^R = v^R(t,\xi), \quad v^S = v^S(\xi - (\sigma - \sigma_-)t - X(t) - \beta).
	 \]
	\subsection{Local existence} We begin by stating the local existence of solutions. 
	
	\begin{proposition}\label{prop:local}
		For any constant $\beta>0$, let $(\hat{v},\hat{u})$ be smooth monotone functions such that
		\[(\hat{v}(x),\hat{u}(x) ) = (v_{+},u_{+}) ,\quad \mbox{for}\quad  x\ge \beta, \quad \hat{v}(0)>0.\]
		Then, for any constants $M_0$, $M_1$, $\underline{\kappa}_0$, $\overline{\kappa}_0$, $\underline{\kappa}_{1}$, and $\overline{\kappa}_1$ with $0<M_0<M_1$ and $0<\underline{\kappa}_1<\underline{\kappa}_0<\overline{\kappa}_0<\overline{\kappa}_1$, 
		there exists a finite time $T_0>0$ such that if the initial data ($v_0,u_0)$ satisfy
        \[\|(v_0-\underline{v},u_0-\underline{u})\|_{H^1(\R_+)} \le M_0 \quad\mbox{and}\quad 0<\underline{\kappa}_0\le v_0(x)\le\overline{\kappa}_0,\quad\forall x\in\R_+,\]
	where $(\underline{v},\underline{u})$ are smooth functions defined as
    \begin{equation} \label{eq: underline v, u}
        (\underline{v},\underline{u})=(v^{BL},u^{BL})+(\hat{v},\hat{u})-(v_*,u_*).
    \end{equation}
	Then the inflow problem \eqref{eq:NS_inflow}-\eqref{boundary_inflow} has a unique solution $(v,u)$ on $[0,T_0]$ satisfying
    \begin{align*}
		&v- \underline{v} \in C([0,T_0];H^1(\mathbb{R_+})),\\
		& u- \underline{u} \in C([0,T_0];H^1(\mathbb{R_+})) \cap L^2([0,T_0];H^2(\R_+)),
		\end{align*}
	    and
        \[\|(v,u)-(\underline{v},\underline{u})\|_{L^\infty([0,T_0];H^1(\R_+))} \le M_1.\]
		Moreover, it satisfies
		\[\underline{\kappa}_1\le v(t,x)\le \overline{\kappa}_1, \quad (v,u)(t,0)=(v_-,u_-), \quad \forall(t,x)\in [0,T_0]\times \R_+.\]
	\end{proposition}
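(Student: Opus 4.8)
The plan is to prove the local existence by a standard iteration/fixed-point argument in the unknown $(\phi,\psi) := (v,u)-(\underline v,\underline u)$, converting the inflow problem \eqref{eq:NS_inflow}--\eqref{boundary_inflow} into a problem with \emph{homogeneous} boundary data at $\xi=0$ and studying it on the weighted-in-time space
\[
\mathcal{X}(T_0) := \left\{(\phi,\psi) \;\middle|\; \phi \in C([0,T_0];H^1), \;\; \psi \in C([0,T_0];H^1)\cap L^2([0,T_0];H^2),\;\; \|(\phi,\psi)\|_{\mathcal X(T_0)} \le M_1 \right\}.
\]
First I would observe that $(\underline v,\underline u)$ defined in \eqref{eq: underline v, u} satisfies $(\underline v,\underline u)(0) = (v^{BL}(0),u^{BL}(0)) + (\hat v(0),\hat u(0)) - (v_*,u_*)$ and matches $(v_-,u_-)$ at $\xi=0$ only up to the shift built into $\hat v,\hat u$; in fact the precise arrangement is that $U^{BL}(0)=U_-$ so that at $\xi=0$ we need $(\hat v,\hat u)(0)=(v_*,u_*)$ — I would either include this normalization in the hypotheses on $(\hat v,\hat u)$ or absorb the mismatch into the initial perturbation bound $M_0$. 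The point of subtracting $(\underline v,\underline u)$ is that it already encodes the correct boundary values and the correct far-field state $(v_+,u_+)$ (since $(\hat v,\hat u)\equiv(v_+,u_+)$ for $x\ge\beta$ and $U^{BL}\to U_*$), so the perturbation $(\phi,\psi)$ solves a system with zero boundary data at $\xi=0$ and $\phi,\psi\to 0$ as $\xi\to\infty$, plus forcing terms coming from the fact that $(\underline v,\underline u)$ is \emph{not} an exact solution of \eqref{eq:NS_inflow}.

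The key steps, in order: (i) Write the equation for $(\phi,\psi)$: from \eqref{eq:NS_inflow} one gets $\phi_t - \sigma_-\phi_\xi - \psi_\xi = f_1$ and $\psi_t - \sigma_-\psi_\xi + (p(\underline v+\phi)-p(\underline v))_\xi = \big(\tfrac{(\underline u+\psi)_\xi}{\underline v+\phi}\big)_\xi - \big(\tfrac{\underline u_\xi}{\underline v}\big)_\xi + f_2$, where $f_1,f_2$ depend only on $(\underline v,\underline u)$ and its derivatives and are smooth, bounded, and compactly-localized near $[0,\beta]$ plus the BL tail; linearize the second equation around a frozen coefficient to expose the parabolic structure $\psi_t = \tfrac{1}{\underline v}\psi_{\xi\xi} + \text{lower order}$. (ii) Set up the linear iteration: given $(\phi^{(n)},\psi^{(n)})\in\mathcal X(T_0)$, solve the linear transport equation for $\phi^{(n+1)}$ along the characteristics $\tfrac{d\xi}{dt}=-\sigma_-$ (note $\sigma_-<0$, so characteristics enter the domain from $\xi=0$ where $\phi^{(n+1)}=0$, which is exactly the well-posed configuration for the inflow boundary), and solve the linear uniformly-parabolic equation for $\psi^{(n+1)}$ with homogeneous Dirichlet data at $\xi=0$, using that $\underline v + \phi^{(n)}$ stays in $[\underline\kappa_1,\overline\kappa_1]$ for $T_0$ small by the $M_1$-bound and the $M_0,\overline\kappa_0$ bounds on the data (this is where the gap $\underline\kappa_1<\underline\kappa_0\le v_0\le\overline\kappa_0<\overline\kappa_1$ is consumed). (iii) Energy estimates: multiply the $\phi$-equation by $\phi$ and $\phi_\xi$, the $\psi$-equation by $\psi$, $-\psi_{\xi\xi}$ (and use the $\psi$-equation itself to bound $\|\psi_{\xi\xi}\|_{L^2_tL^2}$), integrate by parts, control the boundary terms at $\xi=0$ (they vanish or are harmless because $\phi=\psi=0$ and $\psi_\xi$ appears only in coercive position), and use Gronwall plus $\int_0^{T_0}$ smallness to close the bound $\|(\phi^{(n+1)},\psi^{(n+1)})\|_{\mathcal X(T_0)}\le M_1$ for $T_0=T_0(M_0,M_1,\underline\kappa_0,\overline\kappa_0,\underline\kappa_1,\overline\kappa_1,\beta)$ small. (iv) Contraction: run the same estimates on differences $(\phi^{(n+1)}-\phi^{(n)},\psi^{(n+1)}-\psi^{(n)})$ in the lower-order norm $C([0,T_0];L^2)\times (C([0,T_0];L^2)\cap L^2([0,T_0];H^1))$ to get a factor $<1$ after shrinking $T_0$ further, yielding a fixed point; then bootstrap its regularity to $\mathcal X(T_0)$. (v) Finally, recover $v = \underline v + \phi \in [\underline\kappa_1,\overline\kappa_1]$, $(v,u)(t,0)=(v_-,u_-)$, and the claimed regularity classes, completing the proof.

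The main obstacle I expect is the \textbf{boundary term bookkeeping at $\xi=0$} in the $H^1$ energy estimate for $\psi$: when one tests the parabolic equation against $-\psi_{\xi\xi}$, integration by parts produces a term like $\big[\psi_t\psi_\xi\big]_{\xi=0}$ and $\big[\tfrac{\psi_\xi}{\underline v}\psi_{\xi\xi}\big]$-type contributions; since $\psi(t,0)=0$ we have $\psi_t(t,0)=0$, which kills the worst term, but one must be careful that $\psi_\xi(t,0)$ is \emph{not} prescribed, so any term still containing an uncontrolled trace of $\psi_{\xi}$ or $\psi_{\xi\xi}$ at $\xi=0$ must be shown to cancel or to be of favorable sign. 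The clean way around this, which I would adopt, is to first derive the $H^1$ bound by differentiating the \emph{system} rather than just the parabolic equation: from $\phi_t - \sigma_-\phi_\xi = \psi_\xi + f_1$ one controls $\psi_\xi$ in $L^2$ via $\phi_t,\phi_\xi$, and the genuine second-derivative control $\|\psi_{\xi\xi}\|_{L^2_{t,\xi}}$ comes from reading it off the parabolic equation $\psi_{\xi\xi} = \underline v(\psi_t - \sigma_-\psi_\xi + (\cdots)_\xi + \cdots)$ after the $L^2$-in-time bounds on the right side are in hand — this avoids ever integrating by parts against $\psi_{\xi\xi}$ and hence sidesteps the delicate boundary traces entirely. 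A secondary nuisance is verifying that the forcing $(f_1,f_2)$ lies in the right spaces uniformly in time — but since $(\underline v,\underline u)$ is time-independent and smooth with the BL decay of Lemma \ref{lem:boundary_layer} and $(\hat v,\hat u)$ compactly localized, $f_1,f_2\in H^1(\R_+)\cap L^\infty$ with norms depending only on $\beta$ and the fixed data, so this is routine.
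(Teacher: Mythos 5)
Your strategy---subtract the time-independent profile $(\underline{v},\underline{u})$ so that the perturbation has constant boundary data at $\xi=0$, iterate a linear transport equation for the volume perturbation against a uniformly parabolic equation for the velocity perturbation with Dirichlet data, close $H^1$ energy estimates on a short time interval, and contract in a weaker norm---is precisely the ``standard iteration argument'' that the paper invokes and omits, so the overall plan matches. The mismatch you flag ($\hat v(0)>0$ alone does not give $\underline v(0)=v_-$) is cosmetic: it merely makes the perturbation's boundary value a fixed constant rather than zero, which is equally harmless.

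The one piece of your outline that does not survive scrutiny is the device you introduce to avoid testing the momentum equation against $-\psi_{\xi\xi}$. Reading $\psi_\xi$ off $\phi_t-\sigma_-\phi_\xi=\psi_\xi+f_1$ in order to bound $\|\psi_\xi\|_{L^\infty_t L^2_\xi}$ requires a bound on $\|\phi_t\|_{L^\infty_t L^2_\xi}$, which is not part of the iteration norm; retrieving $\phi_t$ from the same equation reintroduces $\|\psi_\xi\|$, so the argument is circular as written. Fortunately the avoidance is unnecessary. Since $\underline u$ is time-independent and the boundary value $u(t,0)=u_-$ is fixed, $\psi(t,0)$ is constant in time, hence $\psi_t(t,0)=0$ and the trace $[\psi_t\psi_\xi]_{\xi=0}$ vanishes identically. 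The remaining transport trace from $\int_0^\infty\sigma_-\psi_\xi\psi_{\xi\xi}\,d\xi$ equals $-\tfrac{\sigma_-}{2}|\psi_\xi(t,0)|^2$, which is \emph{nonnegative} because $\sigma_-<0$ (the inflow sign): a dissipative boundary term that can simply be dropped. The direct $-\psi_{\xi\xi}$ test therefore closes on the half-line with no extra bookkeeping; this is the same test the paper performs in the global estimate \eqref{psi_est}--\eqref{est:bdu1} of Lemma~\ref{lem:uhigh}, where the $\psi_t$-trace must be retained only because there $\psi$ is measured against the time-dependent superposition $\overline u$, for which $\psi_t(t,0)\neq 0$.
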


	\begin{proof} Since the proof follows from the standard iteration argument, we omit it.
	\end{proof}
    
The superposition $(\overline{v}, \overline{u})$ can be represented as the following decomposition:  
\[
(\overline{v}, \overline{u}) = (v^{BL}, u^{BL}) + (v^R + v^S - v^*, u^R + u^S - u^*) - (v_*, u_*).
\]  
Later, we will choose $(\hat{v}(x), \hat{u}(x))$ which approximates the superposition $(v^R + v^S - v^*, u^R + u^S - u^*)$ at $t = 0$. Consequently, the smooth functions $(\underline{v},\underline{u})$, defined in \eqref{eq: underline v, u}, are used to approximate the superposition $(\overline{v},\overline{u})$. We refer to \eqref{ubarvbar}--\eqref{est-init} of Appendix \ref{app:A} for the details.

    \subsection{Construction of shift} As previously mentioned, the viscous shock must be shifted to obtain stability. Here, we explicitly introduce the shift function $X:\bbr_+\to\bbr$ as a solution to the following ODE:
	\begin{align}
	\begin{aligned}\label{ODE_X}
	\dot{X}(t)&=-\frac{M}{\delta_S}\Bigg(\int_{\R_+} a u^S_\xi(\xi-(\sigma-\sigma_-)t-X(t)-\beta) (u-\overline{u}) \,d \xi\\
	&\hspace{1.5cm}+\frac{1}{\sigma}\int_{\R_+} a p'(v^S) v^S_\xi(\xi-(\sigma-\sigma_-)t-X(t)-\beta)  (u-\overline{u}) \,d \xi\Bigg),\quad X(0)=0,
	\end{aligned}
	\end{align}
	where $M$ is a constant which will be determined in the proof of Lemma \ref{lem:leading}. Here, $a$ is the weight function explicitly defined as
	\begin{equation}\label{weight}
	a(t,\xi) = 1+\frac{u^*-u^S(\xi-(\sigma-\sigma_-)t-X(t)-\beta)}{\sqrt{\delta_S}}.
	\end{equation}
	From this definition, the following properties can be easily obtained:
	\begin{equation}\label{est:a}
	1\le a\le 1+\sqrt{\delta_S},\quad a_\xi = -\frac{u^S_\xi}{\sqrt{\delta_S}}>0.
	\end{equation}
    The existence of the shift $X$ as a Lipschitz continuous solution to \eqref{ODE_X} is guaranteed by the standard Cauchy-Lipschitz theory, see also \cite[Lemma 3.3]{KVW23}.
	\begin{proposition}
		For any $c_1,c_2,c_3>0$, there exists a constant $C>0$ such that the following is true. For any $T>0$, and any functions $v,u\in L^\infty((0,T)\times\R)$ with
		\[c_1\le v(t,x)\le c_2,\quad |u(t,x)|\le c_3,\quad(t,x)\in[0,T]\times\R,\]
		the ODE \eqref{ODE_X} has a unique Lipschitz continuous solution $X$ on $[0,T]$. Moreover, we have
		\[|X(t)|\le Ct,\quad t\in[0,T].\]
	\end{proposition}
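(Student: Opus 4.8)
The plan is to treat the right-hand side of \eqref{ODE_X} as a vector field in $X$, show it is globally Lipschitz in $X$ uniformly in $t$ (using the bounds on $v,u$), and invoke Cauchy-Lipschitz. Concretely, write $\dot X(t) = F(t,X(t))$ where
\[
F(t,X) = -\frac{M}{\delta_S}\int_{\R_+}\Big( a\, u^S_\xi + \tfrac{1}{\sigma}a\, p'(\overline v)\, v^S_\xi\Big)(\xi-(\sigma-\sigma_-)t-X-\beta)\,(u-\overline u)\,d\xi,
\]
and note that every factor depends on $X$ only through the shifted argument $\zeta = \xi-(\sigma-\sigma_-)t-X-\beta$. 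First I would record that $F$ is well-defined and bounded: by Lemma \ref{lem:viscous_shock} the profiles $u^S_\xi, v^S_\xi$ are bounded by $C\delta_S^2 e^{-C\delta_S|\zeta|}$, hence integrable in $\xi$ with $L^1$-norm $\le C\delta_S$; the weight $a$ is bounded by $1+\sqrt{\delta_S}$ via \eqref{est:a}; $p'(\overline v)$ is bounded because $\overline v$ lies in a fixed compact interval away from $0$ (here one uses $c_1\le v\le c_2$ together with the uniform bounds on the elementary waves, so that $\overline v$ and the solution stay in a range on which $p'$ is Lipschitz); and $u-\overline u$ is bounded by $2c_3 + \|\overline u\|_{L^\infty}$. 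This gives $|F(t,X)|\le C M/\sqrt{\delta_S}$ uniformly, which is where the growth bound $|X(t)|\le Ct$ comes from directly by integration in $t$ from $X(0)=0$.

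Next I would establish the Lipschitz estimate in $X$. Fix $t$ and $X_1, X_2$; the difference $F(t,X_1)-F(t,X_2)$ is an integral of the differences of $a\,u^S_\xi$, $a\,p'(\overline v)v^S_\xi$ evaluated at arguments differing by $X_1-X_2$, times $(u-\overline u)$, plus the contribution from $\overline u$ and $\overline v$ themselves depending on $X$ through $v^S, u^S$. Each profile $u^S_\xi, v^S_\xi$ is $C^1$ with $|(v^S_{\zeta\zeta},u^S_{\zeta\zeta})|\le C\delta_S|(v^S_\zeta,u^S_\zeta)|$ by Lemma \ref{lem:viscous_shock}, so $\|\partial_\zeta(u^S_\xi)\|_{L^1_\xi}\le C\delta_S^2$; similarly $a$ and $p'(\overline v)$ are Lipschitz in $\zeta$ with integrable derivatives. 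Combining via the mean value theorem and the triangle inequality yields $|F(t,X_1)-F(t,X_2)|\le L|X_1-X_2|$ with $L$ depending on $M,\delta_S,c_1,c_2,c_3$ but not on $t$ or $T$. Since $F$ is also (at least) measurable in $t$ and bounded, the Carathéodory/Cauchy-Lipschitz theorem gives a unique absolutely continuous — in fact Lipschitz, since $\dot X = F$ is bounded — solution $X$ on $[0,T]$, and the a priori bound $|X(t)|\le Ct$ follows as above.

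The main obstacle, though it is mild, is bookkeeping the dependence of $\overline u$ and $\overline v$ on $X$ inside $F$: the composite wave $\overline U$ in \eqref{eq: composite wave} itself contains the shifted shock profile $v^S(\cdot - X(t)-\beta)$, so $F(t,X)$ is not simply ``a fixed function convolved with a shift'' but has $X$ appearing in several places, including inside $p'(\overline v)$ and inside $\overline u$ in the factor $(u-\overline u)$. One must check that all these dependencies are still Lipschitz in $X$ with integrable-in-$\xi$ bounds — this is true because $v^{BL}$ and $v^R$ do not depend on $X$ at all, and the $X$-dependent piece $v^S(\xi-(\sigma-\sigma_-)t-X-\beta)-v^*$ together with its $\xi$-derivatives is controlled exactly by Lemma \ref{lem:viscous_shock}. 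Once this is organized, the argument is routine; alternatively one can cite \cite[Lemma 3.3]{KVW23}, which treats the analogous situation, and only indicate the adaptation to the half-line and to the extra boundary-layer term in $\overline U$ (which, being $X$-independent, causes no new difficulty). I would present the proof at this level of detail, emphasizing the uniform-in-$t$ Lipschitz constant and deferring the elementary estimates to the cited references.
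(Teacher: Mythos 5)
Your proposal is correct and takes essentially the same approach as the paper: the paper does not spell out a proof but states only that existence of a Lipschitz solution follows from ``standard Cauchy-Lipschitz theory'' and cites \cite[Lemma 3.3]{KVW23}, and your argument fills in exactly those standard details (uniform boundedness of $F(t,X)$ giving $|X(t)|\le Ct$, a uniform-in-$t$ Lipschitz estimate in $X$ via the decay and $C^1$ bounds on the shock profile, then Carath\'eodory existence). The only small slip is the bound $|F(t,X)|\le CM/\sqrt{\delta_S}$, which should read $|F(t,X)|\le CM$ since $a=O(1)$, $\|u^S_\xi\|_{L^1}+\|v^S_\xi\|_{L^1}=O(\delta_S)$, and the prefactor is $M/\delta_S$; this has no effect on the conclusion.
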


   The reason why we choose the shift $X(t)$ as in \eqref{ODE_X} becomes clear in Section \ref{sec:4}, where we will exploit the definition of the shift function to estimate the weighted relative entropy.

	\subsection{A priori estimate}
	We now state a priori estimate, which is the key estimate for obtaining the time-asymptotic behavior. 
	
	\begin{proposition}\label{apriori-estimate}
		For any $U_-\in \Omega_{\textup{sub}}$, let $U_*$ be the constant state which satisfies $U_*\in BL(U_-)\cap \Gamma_{\textup{trans}}$. Then, there exist positive constants $C_0,\delta_0$, and $\e_1$ such that for any two constant states $U^*$ and $U_+$ with $U^*\in R_1(U_*)$, and $U_+\in S_2(U^*)$ satisfying $\delta_R,\delta_S < \delta_0$, there exists $\beta>0$ large enough (depending only on the shock strength $\delta_S$) such that the following holds. Suppose that $(v,u)$ is the solution to \eqref{eq:NS_inflow} on $[0,T]$ for some $T>0$, and $(\overline{v},\overline{u})$ is the superposition defined in \eqref{eq: composite wave}.  Also, let $X(t)$ be the shift given in \eqref{ODE_X} with the weight function $a$ defined in \eqref{weight}.  Assume that 
		\begin{align*}
		v-\overline{v} \in C([0,T];H^1(\bbr_+)),\quad  u-\overline{u} \in C([0,T];H^1(\bbr_+))\cap L^2(0,T;H^2(\bbr_+)) ,
		\end{align*}
		and
		\begin{equation}\label{smallness}
		\|(v-\overline{v},u-\overline{u})\|_{L^\infty(0,T;H^1(\bbr_+))} \le \e_1.
		\end{equation}
		Then, for all $0\le t\le T$,
		\begin{equation*}
		\begin{aligned}
		 &\sup_{0\le t\le T}\norm{(v-\overline{v},u-\overline{u})(t,\cdot)}_{H^1(\R_+)}^2  + \delta_S \int_0^t |\dot{X}|^2\,ds + \int_0^t (\mathcal{G}_1 + \mathcal{G}_2 + \mathcal{D}_v + \mathcal{D}_{u_1} + \mathcal{D}_{u_2})\,ds\\
    &\quad \leq C_0 \big( \norm{(v-\overline{v})(0,\cdot)}_{H^1(\R_+)}^2 +  \norm{(u-\overline{u})(0,\cdot)}_{H^1(\R_+)}^2 \big) + C_0\delta_0^{1/6} + C_0  e^{-C_0\delta_S \beta},
		\end{aligned}
		\end{equation*}
		where $C_0$ is a positive constant independent of $T$ and
		\begin{equation*}
		\begin{aligned}
		&\mathcal{G}_1:=\int_{\R_+}|u^S_\xi||u-\overline{u}|^2\,d\xi, \quad \mathcal{G}_2 =\int_{\R_+}|\overline{u}_\xi||v-\overline{v}|^2\,d\xi, \\
        &\mathcal{D}_v:= \int_{\R_+} |(p(v) - p(\overline{v}))_\xi|^2\,d\xi, \quad \mathcal{D}_{u_1}:=\int_{\R_+}|(u-\overline{u})_\xi|^2\,d\xi,   
        \quad \mathcal{D}_{u_2}:= \int_{\R_+} |(u - \overline{u})_{\xi \xi}|^2\,d\xi.
		\end{aligned}
		\end{equation*}
	\end{proposition}

	The proof of Proposition \ref{apriori-estimate} will be postponed to Section \ref{sec:4} and Section \ref{sec:high-order}, where we use the method of $a$-contraction with shift and the high-order estimates, respectively.

    \subsection{Proof of Theorem \ref{thm:main}} 
    Now, we are in the position to present the proof of Theorem \ref{thm:main}. Since the proof itself is very lengthy and technical, we put the detailed proofs of the global existence and the time-asymptotic behavior in Appendix \ref{app:A} and Appendix \ref{app:B} separately. Here, we only present the main idea of the proof.\\
    
    \noindent $\bullet$ (Global existence) First of all, from Proposition $\ref{prop:local}$, we deduce that the inflow problem \eqref{eq:NS_inflow}-\eqref{boundary_inflow} admits a local-in-time solution $(v,u)$ which satisfies
    \[(v-\underline{v},u-\underline{u}) \in C([0,T];H^1(\mathbb{R}_+) )\quad  \text{and} \quad \|(v-\underline{v},u-\underline{u})\|_{H^1(\mathbb{R}_+)} <\frac{\e_1}{2}\]
    for some $T>0$, where $(\underline{v},\underline{u})$ is a pair of functions that is defined in \eqref{eq: underline v, u}. Moreover, one can prove that the $H^1$-perturbation between $(\underline{v},\underline{u})$ and the superposition $(\overline{v},\overline{u})$ is less than $\e_1/2$, that is,
    \[\|(\underline{v}-\overline{v},\underline{u}-\overline{u})\|_{L^\infty(0,T;H^1(\mathbb{R}_+))} <\frac{\e_1}{2}.\]
    Then, we employ the standard continuation argument based on Proposition \ref{apriori-estimate} to obtain the global existence of the inflow problem. \\
    
    \noindent $\bullet$ (Asymptotic behavior) To show the time-asymptotic behavior of the solution, we will follow a similar strategy used in \cite{HKKL_pre2, KVW23}. Precisely, we will show that the function
    \[g(t):=\|(v(t,\cdot)-\overline{v}(t,\cdot))_\xi\|_{L^2(\R_+)}^2+\|(u(t,\cdot)-\overline{u}(t,\cdot))_\xi\|_{L^2(\R_+)}^2\] 
    satisfies $g\in W^{1,1}(\R_+)$ by using Proposition \ref{apriori-estimate}. Then, the Gagliardo-Nirenberg-Sobolev inequality implies the desired convergence results, and completes the proof of Theorem \ref{thm:main}.
    
   \subsection{Notations} In the following, we focus on verifying Proposition \ref{apriori-estimate}. For simplicity, we will use the concise notation $L^p:=L^p(\R_+)$ and $H^1:=H^1(\R_+)$.
 
	\section{$L^2$-perturbation estimate}\label{sec:4}
	\setcounter{equation}{0}
	
	In this section, we first derive the $L^2$-estimate of the perturbation as a first step to prove Proposition \ref{apriori-estimate}. Precisely, we prove the following estimate on the $L^2$-perturbation between the solution $U=(v,u)$ to \eqref{eq:NS_inflow} and the superposition $\overline{U}=(\overline{v},\overline{u})$ defined in  \eqref{eq: composite wave}.
	\begin{lemma}\label{lem:main}
		Under the assumption of Proposition \ref{apriori-estimate}, we have
		\begin{align}
		\begin{aligned}\label{est_vu}
		&\sup_{t\in[0,T]}\|U(t,\cdot)-\overline{U}(t,\cdot)\|_{L^2}^2+\int_0^t (\delta_S|\dot{X}|^2+G_S+G_1+G_{BL}+G_R+D_{u_1})\,ds\\
		&\le C\|U_0-\overline{U}(0,\cdot)\|_{L^2}^2 +C\delta_0^{1/6} +Ce^{-C\delta_S\beta}+C\e_1^2 \left(\int_0^t \|(p(v)-p(\bar{v}))_{\xi}\|_{L^2}^2 ds +\int_0^t \|(u-\overline{u})_{\xi\xi}\|_{L^2}^2 ds\right),
		\end{aligned}
		\end{align}
		 for any $t\in[0,T]$, where
		\begin{align*}
		&G_S:=\int_{\R_+}|u^S_\xi||u-\overline{u}|^2\,d\xi,\; G_1 = C_*\int_{\R_+}a_\xi\left|p-\overline{p}-\frac{u-\overline{u}}{2C_*}\right|^2\,d\xi,\\
		&G_{BL}:=\int_{\R_+}a u^{BL}_\xi p(v|\overline{v})\,d\xi,\; G_R:=\int_{\R_+}au^R_\xi p(v|\overline{v})\,d\xi, \; D_{u_1}:=\int_{\R_+}\frac{a}{v}|(u-\overline{u})_\xi|^2\,d\xi,
		\end{align*}
    and 
    \begin{equation*} 
        C_* := \frac{1}{2\sqrt{-p'(v^*)}} - \sqrt{\delta_S}{\frac{\gamma + 1}{2 \gamma p(v^*)}} > 0.
    \end{equation*}
 
	\end{lemma}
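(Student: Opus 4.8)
The plan is to derive the $L^2$-estimate by the method of $a$-contraction with shifts, following the framework of \cite{KV21,KVW23,HKKL_pre2}. First I would introduce the effective perturbation: write the system \eqref{eq:NS_inflow} for $U=(v,u)$ and the equation satisfied by the superposition $\overline{U}$, which will pick up error terms coming from (i) the fact that each elementary wave solves its own equation rather than the coupled one, and (ii) the wave interaction terms due to the nonlinearity of $p$ and of the viscous term $(u_\xi/v)_\xi$. The next step is to compute the time derivative of the weighted relative entropy functional $\int_{\R_+} a(t,\xi)\,\eta(U\,|\,\overline U)\,d\xi$, where $\eta$ is the relative entropy associated with the convex entropy $\tfrac12 u^2 + Q(v)$ of the hyperbolic part; here $a$ is the weight \eqref{weight} built from the viscous shock. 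Expanding this derivative and integrating by parts produces: a ``good'' part containing $\delta_S|\dot X|^2$, the shock localization term $G_S$, the Poincaré-type term $G_1$, the boundary-layer and rarefaction good terms $G_{BL},G_R$, and the diffusion $D_{u_1}$; a ``bad'' part collecting boundary terms at $\xi=0$, the leading-order term localized by $a_\xi$ and $u^S_\xi$, and the wave-interaction term (the last term of $J^{\textup{bad}}$ as referenced after the statement). The choice of the shift ODE \eqref{ODE_X} is precisely what cancels the dangerous linear-in-$(u-\overline u)$ part of the $u^S_\xi$- and $v^S_\xi$-localized terms, leaving a quadratic remainder controllable by $\delta_S|\dot X|^2 + G_S$.

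Then I would estimate each piece of the bad part in turn. For the leading-order term I would invoke Lemma \ref{lem:leading} (to be proved later), which applies the Poincaré inequality Lemma \ref{lem:Poincare} after the change of variables $x\mapsto y$ defined by the viscous shock; the optimal constant $\tfrac12$ there, together with the relative-quantity bounds of Lemma \ref{lem:relative_quantity}, is what forces the constant $C_*>0$ in the statement and keeps $G_1$ nonnegative. One subtlety, flagged in the introduction, is that the image of the $y$-variable is $[y_0,1]$ with $y_0>0$ rather than $[0,1]$, since the shock lives on all of $\R$ while the domain is $\R_+$; I would absorb this using $0<y_0\le C e^{-C\delta_S\beta}$, which holds because $|X(t)|$ is controlled and $\beta$ is large, producing the $e^{-C\delta_S\beta}$ term on the right-hand side. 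For the boundary terms at $\xi=0$ I would use that the approximate rarefaction $U^R$ takes the exact value $U_*$ there and that $U^{BL}(0)=U_-$, so the boundary terms reduce to the discrepancy between $U^*$ and the value of the viscous shock at $\xi=0$, which is $O(\delta_S e^{-C\delta_S\beta})$ by Lemma \ref{lem:viscous_shock}; combined with smallness of $\dot X$ (itself bounded by $\|U-\overline U\|_{L^\infty}$, hence by $\e_1$ via \eqref{smallness} and Sobolev), these are made small by choosing $\beta$ large, contributing $Ce^{-C\delta_S\beta}$ and terms absorbable into the good part.

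For the wave-interaction term I would decompose it as $R_1+R_2+R_3$ as in \eqref{SI1 decomposition} according to which pair of waves interacts, and invoke the time-decay estimates of Lemma \ref{lem:wave-interaction}. The interactions involving the (large, degenerate) boundary layer are not time-integrable on their own, but the corresponding contributions $R_1,R_2$ appear multiplied by the perturbation $(u-\overline u)$, so by Young's inequality they split into a small multiple of the diffusion $D_{u_1}$ plus a genuinely time-integrable remainder, exactly as indicated in \eqref{r1}-\eqref{r2}; the interactions not involving the boundary layer ($R_3$-type, and the rarefaction–shock interaction) are handled by their exponential/polynomial decay in $t$ as in \cite{KVW23}. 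The $\delta_0^{1/6}$ term arises here (and from the rarefaction terms $G_R$ versus $D_v$-type quantities) when converting cubic relative-quantity errors using the smallness $\delta_R,\delta_S<\delta_0$. Finally, the higher-order diffusive terms $D_v$ and $D_{u_2}$ enter only through the nonlinear viscous interaction $(u_\xi/v)_\xi - (\overline u_\xi/\overline v)_\xi$, and can be bounded by $C\e_1^2(\int_0^t\|(p(v)-p(\overline v))_\xi\|_{L^2}^2 + \int_0^t\|(u-\overline u)_{\xi\xi}\|_{L^2}^2)$ using \eqref{smallness}, which is exactly the last term on the right-hand side of \eqref{est_vu} — these will later be absorbed by the first-order estimate in Section \ref{sec:high-order}. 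Collecting everything and integrating in time yields \eqref{est_vu}. I expect the main obstacle to be the leading-order Poincaré step on the time-dependent interval $[y_0,1]$ together with the bookkeeping of the shift cancellation: getting the constant $C_*$ genuinely positive requires the sharp $\tfrac12$ in Lemma \ref{lem:Poincare} and a careful pairing of the $a_\xi$-weighted quadratic term against the $u^S_\xi$-localized leading term, with all $\sqrt{\delta_S}$ and $\delta_S$ powers tracked exactly.
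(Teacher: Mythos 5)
Your plan tracks the paper's proof very closely: weighted relative entropy $\int_{\R_+} a\,\eta(U|\overline{U})\,d\xi$, the shift ODE manufacturing the good term $-\frac{\delta_S}{M}|\dot{X}|^2$, the change of variable $y$ with $y_0 \le C e^{-C\delta_S\beta}$ feeding into the Poincar\'e inequality, boundary terms reduced via $U^{BL}(0)=U_-$ and $U^R(t,0)=U_*$ to the shock discrepancy $|U^S(-(\sigma-\sigma_-)t-X(t)-\beta)-U^*|$, the interaction decomposition $\mathcal{R}=R_1+R_2+R_3$ handled by Young against $D_{u_1}$, and the $C\e_1^2$-weighted higher-order diffusions left for absorption in Section~\ref{sec:high-order}. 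All of that is faithful to what the paper actually does.

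There is, however, one genuine gap that your plan does not see and that would sink a naive execution of ``estimate each bad term and collect.'' The term
\[
B_7=\int_{\R_+} a\,(u-\overline{u})_\xi\,\frac{v-\overline{v}}{v\overline{v}}\,\overline{u}_\xi\,d\xi
\]
contains $u^{BL}_\xi$, and the boundary layer is \emph{not} weak: $\delta_{BL}$ is not controlled by $\delta_0$, so you cannot Young this away as a small multiple of $D_{u_1}$ plus $C\delta_0 G_{BL}$ the way you would for the $u^R_\xi$- or $u^S_\xi$-localized pieces. After Young's inequality you are left with $\int a\,(u^{BL}_\xi)^2\,\frac{|v-\overline{v}|^2}{v\overline{v}^2}\,d\xi$, and you must show that this is bounded by a constant \emph{strictly less than one} times $G_{BL}=\int a\,u^{BL}_\xi\,p(v|\overline{v})\,d\xi$ plus $O(\delta_0)$ corrections. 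The paper achieves this by exploiting the structure of the boundary layer equation \eqref{eq:BL}: integrating $\eqref{eq:BL}_2$ and using $\sigma_-^2=-p'(v_*)$ gives $\frac{u^{BL}_\xi}{v^{BL}}\le (v^{BL})^{-\gamma}$, while the algebraic inequality $Z^{\gamma+1}-(\gamma+1)Z+\gamma\ge\gamma(Z-1)^2$ shows $\frac{|v-\overline{v}|^2}{v\overline{v}\,p(v|\overline{v})}\le \frac{1}{\gamma}\overline{v}^{\gamma}$; the product of the resulting factors is $\le \frac{1}{\gamma}+C\delta_0<1$. Combined with a carefully chosen Young split (the $5/16$ weight on $D_{u_1}$), this yields $B_7\le \frac45 G_{BL}+(\frac{1}{100}+\frac{5}{16})D_{u_1}+C\delta_0(\cdots)$, which is absorbable. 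You flag the largeness of the boundary layer in the context of the wave interactions, but the place where it genuinely bites in the zeroth-order estimate is here, and without this sharp-constant argument the $G_{BL}$ good term would be swamped and the estimate would not close.
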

	Here, we set $\overline{p}:=p(\bar{v})$.
	\subsection{Wave interaction estimates}
	We first provide several estimates on the interaction between different elementary waves. Recall that under a priori assumption \eqref{smallness}, the shift $X(t)$ satisfies
	\begin{align*}
	|\dot{X}(t)|\le C\e_1,\quad \mbox{and therefore},\quad |X(t)|\le C\e_1t.
	\end{align*}
	\begin{lemma}\label{lem:wave-interaction}
		Under the assumption of Proposition \ref{apriori-estimate}, there exist positive constants $c$ and $C$ such that the following estimates hold: 
		\begin{align*}
		(1)~&\int_{\R_+}\left(v^{BL}_\xi(v^R-v_*)+v^R_\xi(v_*-v^{BL})\right)d\xi\le C\left(\frac{\delta^{1/8}_R\log(1+\delta_{BL}t)}{(1+t)^{7/8}}+\frac{\delta_{BL}\delta_R}{1+\delta_{BL}t}\right),\\
		(2)~&\int_{\R_+}\left(v^{BL}_\xi(v^S-v^*)+v^S_\xi(v_*-v^{BL})\right)d\xi\le C\left(\delta_{BL}\delta_S e^{-c\delta_St}+\frac{\delta_{BL}\delta_S}{1+\delta_{BL}t}\right),\\
		(3)~&\int_{\R_+}\left(v^{R}_\xi(v^S-v^*)+v^S_\xi(v^*-v^{R})\right)d\xi\le C \left(\delta_R\delta_Se^{-c\delta_St}+\delta_R\delta_Se^{-ct}\right).
		\end{align*}
	\end{lemma}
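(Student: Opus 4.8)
The plan is to exploit the exponential-in-space localization of the boundary layer, the rarefaction tail, and the viscous shock, together with their mutually disjoint regions of concentration, to bound each product of the form ``(slow-decaying factor) $\times$ (derivative of another wave)''. Throughout I will use the three ingredients already at our disposal: the boundary-layer bounds $|(v^{BL}_\xi,u^{BL}_\xi)|\le C\delta_{BL}^2/(1+\delta_{BL}\xi)^2$ and $|v^{BL}-v_*|\le C\delta_{BL}/(1+\delta_{BL}\xi)$ from Lemma \ref{lem:boundary_layer}; the rarefaction bounds $\|(v^R_\xi,u^R_\xi)\|_{L^p}\le C\delta_R^{1/p}(1+t)^{-(1-1/p)}$ together with the pointwise exponential estimate $|(v^R-v^*,v^R_\xi)|\le C\delta_R e^{-C(|\xi+\sigma_-t|+t)}$ for $\xi\ge-\sigma_- t$ from Lemma \ref{lem:rarefaction}; and the shock bounds $|v^S(\zeta)-v^*|\le C\delta_S e^{-C\delta_S|\zeta|}$ for $\zeta<0$, $|v^S_\zeta|\le C\delta_S^2 e^{-C\delta_S|\zeta|}$ from Lemma \ref{lem:viscous_shock}, where $\zeta=\xi-(\sigma-\sigma_-)t-X(t)-\beta$ with $|X(t)|\le C\e_1 t$. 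The key geometric observation is that the boundary layer lives near $\xi=0$ (more precisely, decays on the scale $\xi\sim 1/\delta_{BL}$), the rarefaction is centered at $\xi\approx -\sigma_- t>0$ and decays exponentially away from it, and the shock is centered at $\xi\approx(\sigma-\sigma_-)t+\beta>0$; for $\beta$ large and $t$ moderate these centers are separated, and for $t$ large the rarefaction and shock centers drift apart linearly (their speeds $-\sigma_-$ and $\sigma-\sigma_-$ differ by $\sigma>0$, and recall $\sigma<0<-\sigma_-$ so actually the separation is even cleaner), while the boundary layer stays put.

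For (1): split $\R_+$ at $\xi= -\sigma_- t/2$. On $[0,-\sigma_- t/2]$ the rarefaction factor $|v^R-v_*|$ and $|v^R_\xi|$ are exponentially small in $t$ by the last estimate in \eqref{rarefaction_properties} (since there $|\xi+\sigma_- t|\gtrsim t$), so that part contributes $Ce^{-ct}$, harmless. On $[-\sigma_- t/2,\infty)$ we estimate the two summands separately. For $\int v^{BL}_\xi (v^R-v_*)\,d\xi$ we bound $|v^R-v_*|\le C\delta_R$ and integrate $v^{BL}_\xi$ over $[-\sigma_- t/2,\infty)$, which by Lemma \ref{lem:boundary_layer} gives $\int_{-\sigma_- t/2}^\infty v^{BL}_\xi\,d\xi \le C\delta_{BL}/(1+\delta_{BL}t)$, producing the term $\delta_{BL}\delta_R/(1+\delta_{BL}t)$; a slightly finer split using H\"older with the $L^{8/7}$-type bound on $v^R_\xi$ against an $L^8$ bound on $(v_*-v^{BL})$-weighted tail yields the $\delta_R^{1/8}\log(1+\delta_{BL}t)(1+t)^{-7/8}$ term (the logarithm comes from $\int_{0}^{-\sigma_- t/2}\delta_{BL}/(1+\delta_{BL}\xi)\,d\xi\sim \log(1+\delta_{BL}t)$, and the power $7/8$ from $(1+t)^{-(1-1/8)}$). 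For $\int v^R_\xi (v_*-v^{BL})\,d\xi$ we use $|v_*-v^{BL}|\le C\delta_{BL}$ globally but gain by noting that on the rarefaction's support $\xi\gtrsim -\sigma_- t$, so $|v_*-v^{BL}(\xi)|\le C\delta_{BL}/(1+\delta_{BL}\xi)\le C\delta_{BL}/(1+c\,\delta_{BL}t)$, and then $\int v^R_\xi\,d\xi\le C\delta_R$, again yielding $\delta_{BL}\delta_R/(1+\delta_{BL}t)$; the exponential tail of $v^R_\xi$ handles the region where $\xi$ is not yet comparable to $-\sigma_- t$.

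For (2) and (3): these are easier because the shock is exponentially localized in $\zeta$ and its center $(\sigma-\sigma_-)t+X(t)+\beta$ satisfies, for $\e_1$ small, $(\sigma-\sigma_-)t+X(t)+\beta\ge \tfrac12(\sigma-\sigma_-)t+\beta$ (using $\sigma-\sigma_->0$ and $|X|\le C\e_1 t$). For (2), $\int v^{BL}_\xi (v^S-v^*)\,d\xi$: on $\xi\le \tfrac14(\sigma-\sigma_-)t+\beta$ we have $|v^S-v^*|\le C\delta_S e^{-c\delta_S(t+\beta)}$, and $\int_0^\infty v^{BL}_\xi\,d\xi\le C\delta_{BL}$, giving $\delta_{BL}\delta_S e^{-c\delta_S t}$ (absorbing $e^{-c\delta_S\beta}\le 1$); on the complement, $v^{BL}_\xi\le C\delta_{BL}^2/(1+\delta_{BL}\xi)^2$ with $\xi\gtrsim t$ gives $\le C\delta_{BL}/(1+\delta_{BL}t)$ times $\|v^S-v^*\|_{L^\infty}\le C\delta_S$, i.e.\ the term $\delta_{BL}\delta_S/(1+\delta_{BL}t)$. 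The $\int v^S_\xi(v_*-v^{BL})\,d\xi$ piece is handled the same way, using $|v_*-v^{BL}|\le C\delta_{BL}$ and $\int|v^S_\xi|\,d\xi\le C\delta_S$ restricted to the shock's support which lies at $\xi\gtrsim t$ for the non-exponential part. For (3), both factors are now exponentially localized, at centers separated by $\gtrsim t$ (rarefaction at $-\sigma_- t$, shock at $(\sigma-\sigma_-)t+\beta$, difference $\sigma t+\beta$; since $\sigma<0$ one must be a little careful, but $|\sigma|<-\sigma_-$ ensures $(\sigma-\sigma_-)t>0$ and $-\sigma_- t>0$ with the shock strictly to the right once $t$ is large, while for small $t$ the constant $\beta$ provides the separation), so the integral is bounded by $C\delta_R\delta_S(e^{-c\delta_S t}+e^{-ct})$ by splitting at the midpoint between the two centers and using the exponential decay of whichever factor is off-center on each side.

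\medskip
\noindent\textbf{Main obstacle.} The delicate point is item (1): unlike (2) and (3), the boundary layer decays only \emph{algebraically} in $\xi$ (scale $1/\delta_{BL}$), not exponentially, and $\delta_{BL}$ is not assumed small, so one cannot simply absorb it. The right bookkeeping is to avoid putting $L^\infty$ bounds on both factors simultaneously: one must pair the $L^{8/7}$ (or more precisely, the interpolated $L^p$) bound on $v^R_\xi$ from \eqref{rarefaction_properties} against the $L^8$-norm of the boundary-layer tail $\mathbf 1_{\{\xi\ge -\sigma_- t/2\}}\,C\delta_{BL}/(1+\delta_{BL}\xi)$, whose logarithmic growth is exactly the source of the $\log(1+\delta_{BL}t)$ factor; getting the exponents $7/8$ and the logarithm to come out consistently, and checking that the resulting bound is still (barely) time-integrable after being multiplied by the perturbation and paired with the diffusion $D_{u_1}$ in the subsequent application (as advertised in the introduction), is the technical heart of the lemma. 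The remaining estimates are routine once the support-separation geometry is set up.
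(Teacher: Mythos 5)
There is a genuine gap in your treatment of item (1). The decay estimate you invoke,
\[
|(v^R-v^*,v^R_\xi)(t,\xi)|\le C\delta_R e^{-C(|\xi+\sigma_-t|+t)},
\]
is stated in \eqref{rarefaction_properties} only for $\xi\ge -\sigma_- t$, that is, to the \emph{right} of the rarefaction fan. It cannot be applied on $[0,-\sigma_-t/2]$, which sits to the \emph{left} of (and partly inside) the fan. Indeed, by construction $v^R(t,0)=v_*$ and $v^R_\xi>0$, so near $\xi=0$ one only has $v^R(t,\xi)-v_*\sim \xi\,v^R_\xi$ — a linear, not exponential, departure from $v_*$; inside the fan $v^R_\xi\sim (1+t)^{-1}$, again not exponentially small. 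Your assertion that both $|v^R-v_*|$ and $|v^R_\xi|$ are exponentially small on $[0,-\sigma_-t/2]$ is therefore false, and the parenthetical ``since there $|\xi+\sigma_-t|\gtrsim t$'' does not help because the cited bound simply does not hold for $\xi<-\sigma_-t$.

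This error propagates into your explanation of where the logarithm comes from: you locate it in $\int_0^{-\sigma_-t/2}\delta_{BL}/(1+\delta_{BL}\xi)\,d\xi$, yet on that very interval you also claim the other factor $v^R_\xi$ is exponentially small, which would wipe out the log. The paper instead splits at $\xi=t$, performs an integration by parts that converts the symmetric integrand into $2v^R_\xi(v_*-v^{BL})$ on $[0,t]$ and $2v^{BL}_\xi(v^R-v_*)$ on $[t,\infty)$ (the boundary terms are non-positive because $(v^{BL}-v_*)(v^R-v_*)<0$), and then on $[0,t]$ uses the interpolated $L^\infty$ bound $\|v^R_\xi\|_{L^\infty}\le C\delta_R^{1/8}(1+t)^{-7/8}$ — no exponential smallness claimed — together with $\int_0^t\delta_{BL}(1+\delta_{BL}\xi)^{-1}\,d\xi=\log(1+\delta_{BL}t)$. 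That is the correct source of the logarithm. For items (2) and (3) your direct-splitting approach (without integration by parts) can be made to work, though the paper's integration by parts keeps the bookkeeping cleaner there too; also note a small sign slip: the viscous $2$-shock has $\sigma>0$, not $\sigma<0$, so $(\sigma-\sigma_-)t$ is automatically positive.
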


	\begin{proof}
				
		\noindent (1) 
		The proof for (1) is motivated by \cite{MN01}. First, it follows from \eqref{BL_properties} and \eqref{rarefaction_properties} that
		\[v^{BL}_{\xi},v^R_\xi>0,\quad v^R-v_*>0,\quad v_*-v^{BL}>0.\]
		 Therefore, the integrand is always positive. On the other hand, using $v^{R}(t,0)=v_*=v^{BL}(\infty)$, we have
		\begin{align}
		\begin{aligned}\label{wave_interaction_BL_R}
		\int_{\R_+}&\left(v^{BL}_\xi(v^R-v_*)+v^R_\xi(v_*-v^{BL})\right)d\xi=\left(\int_0^{{t}}+\int_{{t}}^\infty\right)\left(v^{BL}_\xi(v^R-v_*)+v^R_\xi(v_*-v^{BL})\right)d\xi\\
		&= \Bigg(\left[(v^{BL}-v_*)(v^R-v_*)\right]^{\xi=t}_{\xi=0}-2\int_0^{{t}}v^R_{\xi}(v^{BL}-v_*)\,d\xi\\
		&\hspace{1cm}+\left[(v^R-v_*)(v_*-v^{BL})\right]_{\xi=t}^{\xi=\infty}+2\int_{{t}}^\infty v^{BL}_{\xi}(v^R-v_*)\,d\xi\Bigg)\\
		&\le 2\int_0^{{t}}v^R_\xi(v_*-v^{BL})\,d\xi +2\int_{{t}}^\infty v^{BL}_\xi (v^R-v_*)\,d\xi,
		\end{aligned}
		\end{align}
		where we used $(v^{BL}(\xi)-v_*)(v^R(t,\xi)-v_*)<0$ for all $t,\xi>0$. 
        
        We again use \eqref{BL_properties} and \eqref{rarefaction_properties} to observe that for all $\xi\in \R_+$,
		\begin{equation}\label{est:rarefaction}
		\begin{aligned}
		&|v^R-v_*|\le C\delta_R,\quad |v^R_\xi|\le \|v^R_\xi\|_{L^\infty}^{1/8}\|v^R_\xi\|^{7/8}_{L^\infty}\le \frac{C\delta_R^{1/8}}{(1+t)^{7/8}},\\
		&|v^{BL}-v_*|\le \frac{C\delta_{BL}}{1+\delta_{BL}\xi},\quad |v^{BL}_\xi|\le \frac{C\delta^2_{BL}}{(1+\delta_{BL}\xi)^2},
		\end{aligned} 
		\end{equation}
		which imply
		\begin{align*}	
		\int_0^{{t}}&v^R_\xi(v_*-v^{BL})\,d\xi + \int_{{t}}^\infty v^{BL}_\xi(v^R-v_*)\,d\xi\\
		&\le \frac{C\delta_R^{1/8}\delta_{BL}}{(1+t)^{7/8}}\int_0^{{t}}\frac{1}{1+\delta_{BL}\xi}\,d\xi+C\delta^2_{BL}\delta_R\int_{{t}}^\infty\frac{1}{(1+\delta_{BL}\xi)^2}\,d\xi\\
		&= \frac{C\delta_R^{1/8}}{(1+t)^{7/8}}\log(1+\delta_{BL}t)+\frac{C\delta_{BL}\delta_R}{1+\delta_{BL}t}.
		\end{align*}
		This together with \eqref{wave_interaction_BL_R} yields the desired estimate.\\
		
		\noindent (2) For notational convenience, we set $b = (\sigma - \sigma_-)/2$. From \eqref{BL_properties} and \eqref{shock-properties}, we have
		\[v^{BL}_\xi, v^S_\xi>0,\quad v^S-v^*>0,\quad v_*-v^{BL}>0,\]
		and therefore, the integrand is again positive. We use $v^{BL}(\infty)=v_*$ and $v^{BL}(0)=v_-$ to obtain
		\begin{align}
		\begin{aligned}\label{wave_interaction_BL_S}
		&\int_{\R_+}\left(v^{BL}_\xi(v^S-v^*)+v^S_\xi(v_*-v^{BL})\right)d\xi=\left(\int_0^{bt}+\int_{bt}^\infty\right)\left(v^{BL}_\xi(v^S-v^*)+v^S_\xi(v_*-v^{BL})\right)d\xi\\
		&= \Bigg(\left[(v^{BL}-v_*)(v^S-v^*)\right]^{\xi=bt}_{\xi=0}-2\int_0^{bt}v^S_{\xi}(v^{BL}-v_*)\,d\xi\\
		&\hspace{1cm}+\left[(v^S-v^*)(v_*-v^{BL})\right]_{\xi=bt}^{\xi=\infty}+2\int_{bt}^\infty v^{BL}_{\xi}(v^S-v^*)\,d\xi\Bigg)\\
		&\le\delta_{BL}|v^S(-(\sigma-\sigma_-)t-X(t)-\beta)-v^*| +2\int_0^{bt}v^S_\xi(v_*-v^{BL})\,d\xi +2\int_{bt}^\infty v^{BL}_\xi (v^S-v^*)\,d\xi,
		\end{aligned}
		\end{align}
		where we used $(v^{BL}-v_*)(v^S-v^*)<0$. However, since $X(t)\le C\e_1t$ with sufficiently small $\e_1$, we have
		\[-(\sigma-\sigma_-)t-X(t)-\beta\le -\frac{\sigma-\sigma_-}{2}t-\beta=-bt-\beta \leq -bt <0,\]
		and therefore we use \eqref{shock-properties} to obtain
		\begin{equation}\label{est:shock_at_0}
		|v^S(-(\sigma-\sigma_-)t-X(t)-\beta)-v^*|
		 \le C\delta_S e^{-C\delta_S t}.
		\end{equation}
		On the other hand, we again use \eqref{BL_properties} and \eqref{shock-properties} to obtain
		\begin{equation*}
			\begin{aligned}
				&|v^S-v^*|\le \delta_S,\quad |v^S_\zeta(\zeta)|\le C\delta^2_Se^{-C\delta_S|\zeta|},\quad  \zeta\in\R,\\
		&|v^{BL}-v_*|\le \frac{C\delta_{BL}}{1+\delta_{BL}\xi}\le C\delta_{BL},\quad |v^{BL}_\xi|\le \frac{C\delta^2_{BL}}{(1+\delta_{BL}\xi)^2},\quad \xi\in\R_+.
			\end{aligned}
		\end{equation*}
		These estimates imply
		\begin{align}
		\begin{aligned}\label{est:vSvBL}
		\int_0^{bt}&v^S_\xi(v_*-v^{BL})\,d\xi+\int_{bt}^\infty v^{BL}_\xi(v^S-v^*)\,d\xi\\
		&\le C\delta_{BL}\delta_S^2\int_0^{bt}e^{-C\delta_S|\xi-(\sigma-\sigma_-)t-X(t)-\beta|}\,d\xi+C\delta^2_{BL}\delta_S\int_{bt}^{\infty}\frac{1}{(1+\delta_{BL}\xi)^2}\,d\xi\\
		&\le C\delta_{BL}\delta_S e^{-C\delta_St}+C\frac{\delta_{BL}\delta_S}{1+\delta_{BL}t}.
		\end{aligned}
		\end{align}
		
		Here, we use $|X(t)|\le C\e_1t$ so that for $\xi\in[0,bt]=[0,\frac{\sigma-\sigma_-}{2}t]$,
		\[|\xi-(\sigma-\sigma_-)t-X(t)-\beta|\ge \left|\xi-\frac{3(\sigma-\sigma_-)t}{4}-\beta\right|=-\xi+\frac{3bt}{2}+\beta \ge -\xi+\frac{3bt}{2},\]
		which yields
		\[\int_0^{bt}e^{-C\delta_S|\xi-(\sigma-\sigma_-)t-X(t)-\beta|}\,d\xi\le e^{-\frac{3Cb}{2}\delta_St}\int_0^{bt}e^{C\delta_S\xi}\,d\xi
		\le \frac{1}{C\delta_S}e^{-C\delta_St}.\]
		Combining \eqref{est:shock_at_0} and \eqref{est:vSvBL} with \eqref{wave_interaction_BL_S}, we obtain the desired bound.\\
		
		\noindent (3) Finally, we use $v^R(t,0)=v_*$ and $v^R(t,\infty)=v^*$ and $(v^R-v^*)(v^S-v^*)<0$ to obtain
		\begin{align}
		\begin{aligned}\label{wave_interaction_R_S}
		&\int_{\R_+}\left(v^{R}_\xi(v^S-v^*)+v^S_\xi(v^*-v^{R})\right)d\xi=\left(\int_0^{-\sigma_-t}+\int_{-\sigma_-t}^\infty\right)\left(v^{R}_\xi(v^S-v^*)+v^S_\xi(v^*-v^{R})\right)d\xi\\
		&= \Bigg(\left[(v^{R}-v^*)(v^S-v^*)\right]^{\xi=-\sigma_-t}_{\xi=0}-2\int_0^{-\sigma_-t}v^S_{\xi}(v^{R}-v^*)\,d\xi\\
		&\hspace{1cm}+\left[(v^S-v^*)(v^*-v^{R})\right]_{\xi=-\sigma_-t}^{\xi=\infty}+2\int_{-\sigma_- t}^\infty v^{R}_{\xi}(v^S-v^*)\,d\xi\Bigg)\\
		&\le \delta_{R}|v^S(-(\sigma-\sigma_-)t-X(t)-\beta)-v^*|+2\int_0^{-\sigma_-t}v^S_\xi(v^*-v^R)\,d\xi +2\int_{-\sigma_-t}^\infty v^{R}_\xi (v^S-v^*)\,d\xi.
		\end{aligned}
		\end{align}
		We bound the first term of the right-hand side of \eqref{wave_interaction_R_S} by using \eqref{est:shock_at_0} as
		\begin{equation}\label{est:deltaRv^S}
		\delta_{R}|v^S(-(\sigma-\sigma_-)t-X(t)-\beta)-v^*|
		\le C\delta_R\delta_Se^{-C\delta_S t}.
		\end{equation}
		
		On the other hand, we use \eqref{rarefaction_properties} and the same argument as before to obtain
		\begin{align}
		\begin{aligned}\label{est:vSvR}
		&\int_0^{-\sigma_-t}v^S_\xi(v^*-v^R)\,d\xi+\int_{-\sigma_-t}^\infty v^R_\xi(v^S-v^*)\,d\xi\\
		&\le  C\left(\delta_R\delta_S^2\int_0^{-\sigma_-t}e^{-C\delta_S|\xi-(\sigma-\sigma_-)t-X(t)-\beta|}\,d\xi+\delta_{R}\delta_S\int_{-\sigma_-t}^{\infty}e^{-C(\xi+\sigma_-t+t)}\,d\xi\right)\\
		&\le C \left(\delta_R\delta_Se^{-C\delta_St}+\delta_R\delta_Se^{-Ct}\right).
		\end{aligned}
		\end{align}
		
		Finally, combining \eqref{est:deltaRv^S} and \eqref{est:vSvR} with \eqref{wave_interaction_R_S}, we prove the desired inequality.
	\end{proof}

	\subsection{Relative entropy estimate}
	
	 To prove Lemma \ref{lem:main}, we exploit the relative entropy method with the weight. To this end, we define the flux, diffusion matrix, and the entropy as
	 \[A(U):=\begin{pmatrix}
	 	-\sigma_-v-u\\ -\sigma_-u+p(v)
	 \end{pmatrix},\quad M(U):=\begin{pmatrix}
		0 & 0 \\ 0 & v^{-1}
	 \end{pmatrix},\quad \eta(U):=Q(v)+\frac{u^2}{2}\]
	 to rewrite the NS system \eqref{eq:NS_inflow} in the following general form
	 \begin{equation}\label{eq:general_form}
	 	U_t +A(U)_\xi =(M(U)\pa_\xi\nabla \eta(U))_\xi.
	 \end{equation}
	 On the other hand, recall the definition \eqref{eq: composite wave} of the superposition:
	 \[\overline{U}(t,\xi):=U^{BL}(\xi)+U^R (t,\xi)+U^S(\xi-(\sigma-\sigma_-)t-X(t)-\beta)-U_*-U^*.\]
     From \eqref{eq:BL}, \eqref{eq:shock}, and  \eqref{eq:rarefaction}, the superposition satisfies 
	 \begin{align}
	 	\begin{aligned}\label{eq:composite_wave}
	 		&\overline{v}_t -\sigma_-\overline{v}_\xi - \overline{u}_\xi = -\dot{X}v^S_\xi,\\
	 		&\overline{u}_t -\sigma_-\overline{u}_\xi +p(\overline{v})_\xi=-\dot{X}u^S_\xi+ \left(\frac{\overline{u}_\xi}{\overline{v}}\right)_\xi + S.
	 	\end{aligned}
	 \end{align}
	 Here, $S = S_{I1}+S_{I2} + S_R$ with
	 \begin{align}
	 	\begin{aligned}\label{SI1SI2}
	 		&S_{I1} := p(\overline{v})_\xi - p(v^{BL})_\xi -p(v^R)_{\xi}-p(v^S)_{\xi},\\
	 		&S_{I2} := -\left(\left(\frac{\overline{u}_\xi}{\overline{v}}\right)_\xi-\left( \frac{u^{BL}_\xi}{v^{BL}}\right)_\xi-\left( \frac{u^{R}_\xi}{v^{R}}\right)_\xi-\left(\frac{u^S_\xi}{v^S}\right)_\xi\right),\quad S_R:=-\left(\frac{u^R_\xi}{v^R}\right)_\xi.
	 	\end{aligned}
	 \end{align}
  Therefore, the equation \eqref{eq:composite_wave} for the superposition can also be expressed as  
	\begin{equation}\label{eq:general_form_2}
		\overline{U}_t +A(\overline{U})_\xi =(M(\overline{U})\pa_\xi \nabla \eta(\overline{U}))_\xi-\dot{X}\overline{U}_\xi+\begin{pmatrix}
		0 \\ S
	\end{pmatrix}.
	\end{equation}
    
	In order to measure the difference between the solution $U$ and the superposition $\overline{U}$, we define the relative entropy $\eta(U|\overline{U})$ and relative flux $A(U|\overline{U})$:
	\begin{align*}
	&\eta(U|\overline{U}):=\eta(U)-\eta(\overline{U})-\nabla\eta(\overline{U})(U-\overline{U}),\\
	&A(U|\overline{U}):=A(U)-A(\overline{U})-\nabla A(\overline{U})(U-\overline{U}),
	\end{align*}	
	as well as the relative entropy flux $G(U;\overline{U})$:
	\[G(U;\overline{U}):=G(U)-G(\overline{U})-\nabla \eta(\overline{U})(A(U)-A(\overline{U})).\]
	For the case of the NS system \eqref{eq:general_form}, the relative quantities are explicitly computed as
	\begin{align*}
	&\eta(U|\overline{U})=Q(v|\overline{v})+\frac{(u-\overline{u})^2}{2},\quad A(U|\overline{U})=\begin{pmatrix}
	0\\
	p(v|\overline{v})
	\end{pmatrix},\\
	&G(U;\overline{U})=(p(v)-p(\overline{v}))(u-\overline{u})-\sigma_-\eta(U|\overline{U}).
	\end{align*}
	Then, thanks to Lemma \ref{lem:relative_quantity}, the relative entropy $\eta(U|\overline{U})$ is equivalent to the $L^2$-perturbation $\|U-\overline{U}\|_{L^2}^2$, as long as the specific volume is bounded below and above. Therefore, we estimate the following weighted relative entropy to prove Lemma \ref{lem:main}
	\[\int_{\R_+}a(t,\xi)\eta(U(t,\xi)|\overline{U}(t,\xi))\,d\xi.\]
	\begin{lemma}\label{lem:rel}
		Let $a$ be the weight function defined in \eqref{weight}. Suppose $U$ be the solution to \eqref{eq:general_form} and $\overline{U}$ be the superposition satisfying \eqref{eq:general_form_2}. Then, 
	\begin{equation}\label{eq:rel_ent_1}
		\frac{d}{dt}\int_{\R_+} a(t,\xi)\eta(U(t,\xi)|\overline{U}(t,\xi))\,d\xi=\dot{X}Y+J^{\textup{bad}}-J^{\textup{good}}+J^{\textup{bd}},
	\end{equation}
	where
	\begin{align*}
		Y&:=-\int_{\R_+}a_\xi\eta(U|\overline{U})\,d\xi+\int_{\R_+}a\nabla^2\eta(\overline{U})U^S_\xi(U-\overline{U})\,d\xi,\\
		J^{\textup{bad}}&:=\int_{\R_+}a_\xi (p(v)-p(\overline{v}))(u-\overline{u})\,d\xi-\int_{\R_+}a\overline{u}_\xi p(v|\overline{v})\,d\xi\\
		&\quad -\int_{\R_+}a_\xi \frac{u-\overline{u}}{v}(u-\overline{u})_\xi\,d\xi+\int_{\R_+}a_\xi(u-\overline{u})(v-\overline{v})\frac{\overline{u}_\xi}{v\overline{v}}\,d\xi\\
		&\quad +\int_{\R_+}a(u-\overline{u})_\xi\frac{v-\overline{v}}{v\overline{v}}\overline{u}_\xi\,d\xi-\int_{\R_+}a(u-\overline{u})S\,d\xi,\\
		J^{\textup{good}}&:=\frac{\sigma}{2}\int_{\R_+}a_\xi|u-\overline{u}|^2\,d\xi+\sigma\int_{\R_+}a_\xi Q(v|\overline{v})\,d\xi+\int_{\R_+}\frac{a}{v}|(u-\overline{u})_\xi|^2\,d\xi,\\
		J^{\textup{bd}}&:=\Bigg[a(u-\overline{u})(p(v)-p(\overline{v}))-\frac{\sigma_-a}{2}(u-\overline{u})^2-\sigma_-aQ(v|\overline{v})\\
		&\hspace{2cm} -\frac{a}{v}(u-\overline{u})(u-\overline{u})_\xi+\frac{a}{v\overline{v}}(u-\overline{u})(v-\overline{v})\overline{u}_\xi\Bigg]_{\xi=0}.
	\end{align*}
	\end{lemma}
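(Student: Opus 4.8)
The plan is to derive the identity \eqref{eq:rel_ent_1} by differentiating the weighted relative entropy in time and carefully organizing the resulting terms into the four groups $\dot X Y$, $J^{\textup{bad}}$, $J^{\textup{good}}$, and $J^{\textup{bd}}$. First I would compute $\frac{d}{dt}\int_{\R_+} a\,\eta(U|\overline U)\,d\xi$ by expanding the integrand. Since $a$ depends on $t$ through the viscous shock evaluated at $\xi-(\sigma-\sigma_-)t-X(t)-\beta$, one has $a_t = -(\sigma-\sigma_-+\dot X)\,a_\xi$ (using $a_\xi = -u^S_\xi/\sqrt{\delta_S}$); this produces the $\dot X$-contribution $-\int a_\xi \eta(U|\overline U)\,d\xi$ that appears in $Y$, plus a transport term $-(\sigma-\sigma_-)\int a_\xi \eta(U|\overline U)\,d\xi$ that will be absorbed after integration by parts. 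For the $\eta(U|\overline U)$ factor, I would use the standard relative-entropy calculus: $\partial_t\eta(U|\overline U) = \nabla\eta(U)\cdot U_t - \nabla\eta(\overline U)\cdot \overline U_t - \partial_t\big(\nabla\eta(\overline U)\big)(U-\overline U) - \nabla^2\eta(\overline U)\,\overline U_t\,(U-\overline U)$, and then substitute the equations \eqref{eq:general_form} for $U_t$ and \eqref{eq:general_form_2} for $\overline U_t$.

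The second step is the bookkeeping. Substituting the two evolution equations, the hyperbolic (flux) part contributes $-\partial_\xi G(U;\overline U) - A(U|\overline U):\nabla\overline U_\xi$ type terms in the usual way; multiplying by $a$ and integrating by parts in $\xi$ moves a derivative onto $a$, generating $-\int a_\xi G(U;\overline U)\,d\xi$ and the boundary term $[aG(U;\overline U)]_{\xi=0}$, which (together with the parabolic boundary contributions) assembles into $J^{\textup{bd}}$. Writing $G(U;\overline U) = (p(v)-p(\overline v))(u-\overline u) - \sigma_-\eta(U|\overline U)$ explicitly, the $-\int a_\xi G$ term splits into the first term of $J^{\textup{bad}}$, namely $\int a_\xi(p(v)-p(\overline v))(u-\overline u)\,d\xi$, plus $\sigma_-\int a_\xi\eta(U|\overline U)\,d\xi$; the latter combines with the transport term from $a_t$ to give a net $\sigma\int a_\xi\eta(U|\overline U)\,d\xi$ (since $\sigma - \sigma_- = \sigma_- - \sigma_-$... here one uses that the shock travels with speed $\sigma-\sigma_-$ in the $\xi$ frame), which is exactly $\frac{\sigma}{2}\int a_\xi|u-\overline u|^2 + \sigma\int a_\xi Q(v|\overline v)$, the first two terms of $J^{\textup{good}}$. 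The relative-flux term $-\int a\, A(U|\overline U):\nabla\overline U_\xi$ reduces, via $A(U|\overline U) = (0, p(v|\overline v))^\top$, to $-\int a\,\overline u_\xi\, p(v|\overline v)\,d\xi$, the second term of $J^{\textup{bad}}$. The parabolic part $(M(U)\partial_\xi\nabla\eta(U))_\xi$ versus $(M(\overline U)\partial_\xi\nabla\eta(\overline U))_\xi$, after the relative computation and one integration by parts, yields the dissipation $-\int \frac{a}{v}|(u-\overline u)_\xi|^2\,d\xi$ (the last term of $J^{\textup{good}}$), the cross terms $-\int a_\xi\frac{u-\overline u}{v}(u-\overline u)_\xi\,d\xi$, $\int a_\xi(u-\overline u)(v-\overline v)\frac{\overline u_\xi}{v\overline v}\,d\xi$, and $\int a(u-\overline u)_\xi\frac{v-\overline v}{v\overline v}\overline u_\xi\,d\xi$ (the third, fourth, and fifth terms of $J^{\textup{bad}}$), together with the parabolic boundary terms $-\frac{a}{v}(u-\overline u)(u-\overline u)_\xi + \frac{a}{v\overline v}(u-\overline u)(v-\overline v)\overline u_\xi$ at $\xi=0$ in $J^{\textup{bd}}$. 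Finally, the inhomogeneous terms $-\dot X\,\overline U_\xi$ and $(0,S)^\top$ in \eqref{eq:general_form_2} contribute, through the factor $-\nabla^2\eta(\overline U)\,\overline U_t\,(U-\overline U)$ and the direct pairing, exactly $\dot X\int a\,\nabla^2\eta(\overline U)\,U^S_\xi\,(U-\overline U)\,d\xi$ (the second term of $Y$, after discarding lower-order $\dot X$-times-$\overline U_\xi$ pieces that are moved into $Y$) and $-\int a(u-\overline u)S\,d\xi$ (the last term of $J^{\textup{bad}}$).

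The main obstacle I expect is the precise treatment of the time derivative of the weight together with the transport terms: keeping track of the exact coefficient $\sigma$ (as opposed to $\sigma_-$ or $\sigma-\sigma_-$) in front of $\int a_\xi\eta(U|\overline U)\,d\xi$ requires combining $a_t = -(\sigma-\sigma_-+\dot X)a_\xi$ with the $-\sigma_-$ coming from $G(U;\overline U)$ and matching signs correctly, and any sign error there propagates into whether $J^{\textup{good}}$ is genuinely positive. A secondary technical point is ensuring that all the $\dot X$-proportional contributions — there are several, coming both from $a_t$ and from $\overline U_t = \cdots - \dot X\,\overline U_\xi + \cdots$ — are collected into the single coefficient $Y$; this uses that $\overline U_\xi = U^{BL}_\xi + U^R_\xi + U^S_\xi$ and that the $U^{BL}_\xi$ and $U^R_\xi$ pieces can be folded into $Y$ as written (or, as the paper seems to do, $Y$ is defined with only $U^S_\xi$ and the remaining $\dot X(U^{BL}_\xi+U^R_\xi)$ terms are small and handled elsewhere — I would check the paper's exact convention). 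No analytic estimates are needed here; the lemma is a pure identity, so the entire proof is an exercise in careful integration by parts and term-matching against the stated expressions for $Y$, $J^{\textup{bad}}$, $J^{\textup{good}}$, and $J^{\textup{bd}}$.
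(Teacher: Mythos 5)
Your proof strategy is correct and is the same one used in the references the paper cites (\cite{HKKL_pre2}); the paper's own ``proof'' is essentially a citation, so a direct verification is a worthwhile exercise. The overall bookkeeping you describe — differentiate $\int a\,\eta(U|\overline U)\,d\xi$, use $a_t=-(\sigma-\sigma_-+\dot X)a_\xi$, substitute \eqref{eq:general_form} and \eqref{eq:general_form_2}, integrate the flux and parabolic contributions by parts on $\R_+$, and sort into $\dot X Y$, $J^{\textup{bad}}$, $-J^{\textup{good}}$, $J^{\textup{bd}}$ — is exactly right. However, there are two places in the writeup that need tightening, one of which you flagged as the main subtlety and then got slightly wrong.

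First, the sign bookkeeping for the first two terms of $J^{\textup{good}}$ is garbled. Integrating $-\int_{\R_+} a\,\partial_\xi G(U;\overline U)\,d\xi$ by parts gives $+\int_{\R_+}a_\xi G(U;\overline U)\,d\xi + [aG]_{\xi=0}$, \emph{not} $-\int a_\xi G$; and since $G(U;\overline U)=(p(v)-p(\overline v))(u-\overline u)-\sigma_-\eta(U|\overline U)$, the bulk piece splits into $\int a_\xi(p-\overline p)(u-\overline u)\,d\xi$ (the first term of $J^{\textup{bad}}$, with the correct sign) and $-\sigma_-\int a_\xi\eta(U|\overline U)\,d\xi$. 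Combining with the non-$\dot X$ part of $\int a_t\eta(U|\overline U)\,d\xi=-(\sigma-\sigma_-)\int a_\xi\eta\,d\xi-\dot X\int a_\xi\eta\,d\xi$ gives $-(\sigma-\sigma_-)\int a_\xi\eta-\sigma_-\int a_\xi\eta=-\sigma\int a_\xi\eta(U|\overline U)\,d\xi$, which is exactly $-\big(\tfrac{\sigma}{2}\int a_\xi|u-\overline u|^2+\sigma\int a_\xi Q(v|\overline v)\big)=-J^{\textup{good}}_{1,2}$. The key algebra is simply $(\sigma-\sigma_-)+\sigma_-=\sigma$; the parenthetical ``$\sigma-\sigma_-=\sigma_--\sigma_-$'' in your draft is a non sequitur and should be replaced by this identity. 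As you yourself warned, getting this sign right is what guarantees $J^{\textup{good}}\geq 0$, so it is worth being pedantic here.

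Second, the caveat you attach to the $Y$ term (``the remaining $\dot X(U^{BL}_\xi+U^R_\xi)$ terms are small and handled elsewhere'') reflects a genuine misreading. Only the viscous shock carries the shift $X(t)$ in its argument, so $\partial_t\overline U$ contains the term $-\dot X U^S_\xi$ exactly, with nothing left over: there are no $\dot X U^{BL}_\xi$ or $\dot X U^R_\xi$ pieces to discard or estimate. This is precisely how the paper writes $\eqref{eq:composite_wave}$ (with $-\dot X v^S_\xi$ and $-\dot X u^S_\xi$ on the right), and why $Y$ is defined with $U^S_\xi$ and is an exact identity. The symbol $-\dot X\overline U_\xi$ in \eqref{eq:general_form_2} appears to be a typographical slip in the paper and should read $-\dot X U^S_\xi$; once you use that corrected form, the second term of $Y$ drops out of the computation exactly, via $-a\,\nabla^2\eta(\overline U)\big(-\dot X U^S_\xi\big)(U-\overline U)=\dot X\,a\,\nabla^2\eta(\overline U)U^S_\xi(U-\overline U)$, and nothing needs to be ``handled elsewhere.''
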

	\begin{proof}
		Since the proof is almost the same as \cite[Lemma 6.2]{HKKL_pre2} or \cite[Lemma 4.2]{HKKL_pre2}, we refer to these literature for the detailed proof. Compared to the previous literature, the additional term $\int_{\R_+} a(u-\overline{u})S\,d\xi$ in $J^{\textup{bad}}$ is due to the interaction between the elementary waves. Moreover, since $a_\xi>0$, the terms in $J^{\textup{good}}$ have positive sign.
	\end{proof}
	We first estimate the boundary terms $J^{\textup{bd}}$.
	\begin{lemma} \label{lem: boundary est-1}
		There exists a positive constant $C$ such that
		\[\left|\int_0^t J^{\textup{bd}}\,ds \right|\le C(\e_1+\delta_S^{1/3})e^{-C\delta_S\beta}+C\e_1^2\int_0^t \|(u-\overline{u})_{\xi\xi}\|_{L^2}^2\,ds.\]
	\end{lemma}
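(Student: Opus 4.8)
The plan is to evaluate the bracket $[\,\cdots\,]_{\xi=0}$ in $J^{\textup{bd}}$ using the exact boundary condition $(v,u)(t,0)=(v_-,u_-)$, which means the perturbation at $\xi=0$ is governed entirely by the discrepancy between the superposition $\overline U(t,0)$ and $U_-$. Since $U^{BL}(0)=U_-$, $U^R(t,0)=U_*$, and the shift satisfies $|X(t)|\le C\e_1 t$, the shock contribution at the origin is $U^S(-(\sigma-\sigma_-)t-X(t)-\beta)$, which by Lemma 2.4 (the shock decay estimates \eqref{shock-properties}) is within $C\delta_S e^{-C\delta_S\beta}$ of $U^*$ uniformly in $t$, because the argument is $\le -bt-\beta\le-\beta<0$. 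Hence $|U-\overline U|(t,0)\le C\delta_S e^{-C\delta_S\beta}$ and likewise $|(v-\overline v)(t,0)|,|(u-\overline u)(t,0)|\le C\delta_S e^{-C\delta_S\beta}$. Using the bounds from Lemma 2.5 on $p$ and $Q$ (item (2) of Lemma 2.5 gives $|p(v)-p(\overline v)|\le C|v-\overline v|$ at $\xi=0$, and $Q(v|\overline v)\le C|v-\overline v|^2$), together with $1\le a\le 1+\sqrt{\delta_S}$ and $|\overline u_\xi(t,0)|\le C$, every term of $J^{\textup{bd}}$ except the one involving $(u-\overline u)_\xi(t,0)$ is bounded pointwise in time by $C\delta_S e^{-C\delta_S\beta}$. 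Integrating in $t$ would then cost a factor $t$, which is not acceptable, so the real point is that the shock contribution at $\xi=0$ is not merely bounded but \emph{exponentially decaying in $t$}: $|U^S(-(\sigma-\sigma_-)t-X(t)-\beta)-U^*|\le C\delta_S e^{-C\delta_S(bt+\beta)}$. This gives time-integrability, and the $t$-integral of these terms is bounded by $\frac{C}{\delta_S}\,\delta_S e^{-C\delta_S\beta}=Ce^{-C\delta_S\beta}$; absorbing the $\delta_S^{1/3}$-type constants that arise from the relative-quantity estimates yields the stated $C\delta_S^{1/3}e^{-C\delta_S\beta}$ (and similarly $C\e_1 e^{-C\delta_S\beta}$ from terms carrying one perturbation-$L^\infty$ factor).

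The only genuinely problematic term is $\bigl[-\tfrac{a}{v}(u-\overline u)(u-\overline u)_\xi\bigr]_{\xi=0}$, because $(u-\overline u)_\xi(t,0)$ is not controlled pointwise by the a priori assumption — it is only controlled in $L^2_t L^2_\xi$ via $\mathcal D_{u_2}$ and interpolation. The plan here is to avoid taking a pointwise trace of the derivative: write $\int_0^t[\tfrac{a}{v}(u-\overline u)(u-\overline u)_\xi]_{\xi=0}\,ds$ and use that $(u-\overline u)(t,0)$ is small (size $\le C\delta_S e^{-C\delta_S(bt+\beta)}$), then bound $|(u-\overline u)_\xi(t,0)|$ by the one-dimensional trace inequality $|g(0)|^2\le C\|g\|_{L^2(\R_+)}\|g_\xi\|_{L^2(\R_+)}$ applied to $g=(u-\overline u)_\xi$, i.e. $|(u-\overline u)_\xi(t,0)|^2\le C\|(u-\overline u)_\xi\|_{L^2}\|(u-\overline u)_{\xi\xi}\|_{L^2}\le C\e_1\|(u-\overline u)_{\xi\xi}\|_{L^2}$ using \eqref{smallness}. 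Then
\[
\left|\int_0^t \Bigl[\tfrac{a}{v}(u-\overline u)(u-\overline u)_\xi\Bigr]_{\xi=0}\,ds\right|
\le C\int_0^t \delta_S e^{-C\delta_S(bs+\beta)}\,|(u-\overline u)_\xi(s,0)|\,ds,
\]
and splitting via Young's inequality $XY\le \eta X^2+\tfrac{1}{4\eta}Y^2$ with $X=|(u-\overline u)_\xi(s,0)|$ and $Y=\delta_S e^{-C\delta_S(bs+\beta)}$, the $X^2$-part is $\le C\e_1^2\int_0^t\|(u-\overline u)_{\xi\xi}\|_{L^2}^2\,ds$ (this is where the $C\e_1^2\int_0^t\|(u-\overline u)_{\xi\xi}\|_{L^2}^2\,ds$ term in the statement comes from), while the $Y^2$-part integrates to $\tfrac{C}{\delta_S}\delta_S^2 e^{-C\delta_S\beta}=C\delta_S e^{-C\delta_S\beta}$, which is absorbed into $C(\e_1+\delta_S^{1/3})e^{-C\delta_S\beta}$.

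Assembling: the $-\sigma_- aQ(v|\overline v)$ and $-\tfrac{\sigma_-a}{2}(u-\overline u)^2$ and $a(u-\overline u)(p(v)-p(\overline v))$ and $\tfrac{a}{v\overline v}(u-\overline u)(v-\overline v)\overline u_\xi$ traces are each $\le C\delta_S e^{-C\delta_S(bs+\beta)}$ pointwise in $s$ (using $|\overline u_\xi(s,0)|\le C$, which itself follows from the decay estimates \eqref{BL_properties}, \eqref{rarefaction_properties}, \eqref{shock-properties} since the shock derivative at the origin also decays), so their time-integrals are $\le Ce^{-C\delta_S\beta}$; the derivative-trace term is handled as above. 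Summing gives the claimed bound $\bigl|\int_0^t J^{\textup{bd}}\,ds\bigr|\le C(\e_1+\delta_S^{1/3})e^{-C\delta_S\beta}+C\e_1^2\int_0^t\|(u-\overline u)_{\xi\xi}\|_{L^2}^2\,ds$. The main obstacle, as noted, is the lack of pointwise control on $(u-\overline u)_\xi(t,0)$; the resolution is the trace inequality combined with the exponential-in-$t$ smallness of the perturbation at the origin, which is precisely why the shock was placed at distance $\beta$ from the boundary in the first place.
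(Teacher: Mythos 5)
Your overall strategy is the same as the paper's: decompose $J^{\textup{bd}}$ into its five trace terms, observe that the boundary condition forces $|U(t,0)-\overline U(t,0)|=|U^S(-(\sigma-\sigma_-)t-X(t)-\beta)-U^*|\le C\delta_S e^{-C\delta_S t}e^{-C\delta_S\beta}$ (exponential decay in both $t$ and $\beta$, so time-integrable), dispose of the four terms with no derivative trace directly, and treat $\bigl[\tfrac av(u-\overline u)(u-\overline u)_\xi\bigr]_{\xi=0}$ by relating the derivative trace to the dissipation $\mathcal D_{u_2}$ through a one-dimensional trace/interpolation inequality. That is exactly what the paper does.

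However, your Young step for the derivative-trace term contains a genuine error. You split $\delta_S e^{-C\delta_S(bs+\beta)}\cdot|(u-\overline u)_\xi(s,0)|$ via $XY\le\eta X^2+\tfrac{1}{4\eta}Y^2$ and then assert $\int_0^t X^2\,ds \le C\e_1^2\int_0^t\|(u-\overline u)_{\xi\xi}\|_{L^2}^2\,ds$. But your trace inequality gives
\begin{equation*}
X^2=|(u-\overline u)_\xi(s,0)|^2 \le C\|(u-\overline u)_\xi\|_{L^2}\|(u-\overline u)_{\xi\xi}\|_{L^2}\le C\e_1\|(u-\overline u)_{\xi\xi}\|_{L^2},
\end{equation*}
which is \emph{linear} in $\|(u-\overline u)_{\xi\xi}\|_{L^2}$. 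Time integration yields $C\e_1\int_0^t\|(u-\overline u)_{\xi\xi}\|_{L^2}\,ds$, and this is not controlled by the a priori estimate, which only furnishes $\int_0^t\|(u-\overline u)_{\xi\xi}\|_{L^2}^2\,ds<\infty$; applying Cauchy--Schwarz introduces a factor $\sqrt t$. A further pointwise Young to square the linear term would leave a constant whose time integral grows linearly. The fix is to use Young with exponents $(4/3,4)$ instead of $(2,2)$: from $|(u-\overline u)_\xi(s,0)|\le C\e_1^{1/2}\|(u-\overline u)_{\xi\xi}\|_{L^2}^{1/2}$ one has
\begin{equation*}
\delta_S e^{-C\delta_S(bs+\beta)}\cdot C\e_1^{1/2}\|(u-\overline u)_{\xi\xi}\|_{L^2}^{1/2}\le C\delta_S^{4/3}e^{-\frac43 C\delta_S(bs+\beta)}+C\e_1^2\|(u-\overline u)_{\xi\xi}\|_{L^2}^2,
\end{equation*}
whose time integral is $\le C\delta_S^{1/3}e^{-C\delta_S\beta}+C\e_1^2\int_0^t\|(u-\overline u)_{\xi\xi}\|_{L^2}^2\,ds$. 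This is precisely the paper's step (the paper bounds by $\|(u-\overline u)_\xi\|_{L^\infty}$ via Sobolev interpolation, which is equivalent up to constants), and it is where the exponent $\delta_S^{1/3}$ actually arises — a feature you correctly anticipated but which the $(2,2)$ split cannot produce.
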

	\begin{proof}
	We split the terms in $J^{\textup{bd}}$ into the following five terms:
	\begin{align*}
	&J^{\textup{bd}}_1 := \left[a(u-\overline{u})(p(v)-p(\overline{v}))\right]_{\xi=0},\quad J^{\textup{bd}}_2 := -\left[\frac{\sigma_- a}{2}(u-\overline{u})^2\right]_{\xi=0},\\
	&J^{\textup{bd}}_3 := -[\sigma_- aQ(v|\overline{v})]_{\xi=0},\quad J^{\textup{bd}}_4 := -\left[\frac{a}{v}(u-\overline{u})(u-\overline{u})_{\xi}\right]_{\xi=0},\quad J^{\textup{bd}}_5 := \left[\frac{a}{v\overline{v}}(u-\overline{u})(v-\overline{v})\overline{u}_{\xi}\right]_{\xi=0}.
	\end{align*}
	We first estimate $J^{\textup{bd}}_1$ by using Lemma \ref{lem:relative_quantity} and a priori assumption as
	\begin{align*}
	|J^{\textup{bd}}_1|\le\left|\left[a(u-\overline{u})(p(v)-p(\overline{v}))\right]_{\xi=0}\right|\le C|u_--\overline{u}(t,0)|\|p(v)-p(\overline{v})\|_{L^\infty}\le C\e_1|u_--\overline{u}(t,0)|.
	\end{align*}
	However, since $U^{BL}(0)=U_-$ and $U^{R}(t,0)=U_*$, we have
	\[\overline{U}(t,0) =U_-+ U^S(-(\sigma-\sigma_-)t-X(t)-\beta)-U^*,\]
	and therefore \eqref{shock-properties} implies
	\begin{equation}\label{est_boundary}
	|u_--\overline{u}(t,0)|\le |u^S(-(\sigma-\sigma_-)t-X(t)-\beta)-u^*|\le C\delta_Se^{-C\delta_S t}e^{-C\delta_S\beta},
	\end{equation}
	where we used $X(t)\le C\e_1 t$ to obtain
	\[-(\sigma-\sigma_-)t-X(t)-\beta \le -\frac{(\sigma-\sigma_-)t}{2}-\beta<0.\]
	Therefore, we have
	\[\left|\int_0^t J^{\textup{bd}}_1\,ds\right|\le C\e_1\delta_S e^{-C\delta_S\beta}\int_0^t e^{-C\delta_Ss}\,ds\le C\e_1 e^{-C\delta_S\beta}.\]
	Similarly, we use the estimate
	\[|v_--\overline{v}(t,0)|=|v^S(-(\sigma-\sigma_-)t-X(t)-\beta)-v^*|\le C\delta_Se^{-C\delta_S t}e^{-C\delta_S\beta}\]
	and the same argument as above with the estimate \eqref{relative-est} to derive
	\[\left|\int_0^t J^{\textup{bd}}_2\,ds\right|,\,\left|\int_0^t J^{\textup{bd}}_3\,ds\right|\le C\e_1e^{-C\delta_S\beta}.\]
	For $J^{\textup{bd}}_4$, we use Sobolev interpolation inequality, \eqref{est_boundary}, and a priori assumption to obtain
	\begin{align*}
	\left|\int_0^t J^{\textup{bd}}_4\,ds \right|&=\left|\int_0^t \left[\frac{a}{v}(u-\overline{u})(u-\overline{u})_\xi\right]_{\xi=0}\,ds\right|\le C\int_0^t |u_--\overline{u}(s,0)|\|(u-\overline{u})_\xi\|_{L^\infty}\,ds\\
	&\le C\int_0^t |u_--\overline{u}(s,0)|^{4/3}\,ds+C\int_0^t \|(u-\overline{u})_{\xi}\|^2_{L^2}\|(u-\overline{u})_{\xi\xi}\|_{L^2}^2\,ds\\
	&\le C\delta_S^{1/3}e^{-C\delta_S\beta}+C\e_1^2\int_0^t \|(u-\overline{u})_{\xi\xi}\|_{L^2}^2\,ds.
	\end{align*}
	Finally, to estimate $J^{\textup{bd}}_5$, we observe that
	\[\overline{u}_{\xi}(t,\xi) = u^{BL}_{\xi}(\xi)+u^R_{\xi}(t,\xi) +u^S_{\xi}(\xi-(\sigma-\sigma_-)t-X(t)-\beta).\]
	Therefore, using \eqref{BL_properties}, \eqref{rarefaction_properties}, and \eqref{shock-properties}, we have
	\[|\overline{u}_{\xi}(t,0)|\le |u^{BL}_{\xi}(0)+u^R_{\xi}(t,0)+u^S_{\xi}(-(\sigma-\sigma_-)t-X(t)-\beta)|\le \delta_{BL}^2+\delta_R+\delta_S^2.\]
	Thus, after the same argument, we have
	\[\left|\int_0^t J^{\textup{bd}}_5\,ds\right|\le C\delta_S^2(\delta_{BL}^2+\delta_R+\delta_S^2)\int_0^te^{-C\delta_S s}e^{-C\delta_S\beta}\,ds\le C\delta_S(\delta_{BL}^2+\delta_R+\delta_S^2)e^{-C\delta_S\beta}.\]
	Combining all the estimates on the $J^{\textup{bd}}_i$ for $i=1,2,\ldots,5$, and using the smallness of $\delta_R$, and $\delta_S$, we obtain the desired estimate.
	\end{proof}
	
	\subsection{Decomposition of the good and bad terms}
	Now, we manipulate the terms $J^{\textup{bad}}$ and $J^{\textup{good}}$ in \eqref{eq:rel_ent_1} and decompose them into main bad terms and remaining terms. Notice that one of the most problematic terms in $J^{\textup{bad}}$ is
	\begin{equation}\label{bad_term-1}
	\int_{\R_+}a_\xi(p(v)-p(\overline{v}))(u-\overline{u})\,d\xi,
	\end{equation}
	as it is a cross-term between $v-\overline{v}$ and $u-\overline{u}$ with a leading order. To control it, we derive an appropriate quadratic term $|p(v)-p(\overline{v})|^2$ from the terms in $J^{\textup{good}}$ and control \eqref{bad_term-1} via maximization process. The following lemma shows how we extract the quadratic term for $v$-perturbation.
	
	\begin{lemma}\label{lem:quadratic}
		There exists a positive constant $C_*$ such that
		\begin{align}
		\begin{aligned}\label{est:quadratic}
			-&\int_{\R_+}a\overline{u}_{\xi}p(v|\overline{v})\,d\xi -\sigma\int_{\R_+}a_\xi Q(v|\overline{v})\,d\xi\\
			&\le -C_*\int_{\R_+}a_\xi |p(v)-p(\overline{v})|^2\,d\xi -G_{BL}-G_{R} +C(\delta_0+\e_1)\int_{\R_+}a|u^S_{\xi}||p(v)-p(\overline{v})|^2\,d\xi\\
			&\quad +C\delta_S\int_{\R_+}a_\xi |p(v)-p(\overline{v})|^2\,d\xi +C\int_{\R_+}a_\xi |p(v)-p(\overline{v})|^3\,d\xi\\
			&\quad + C \int_{\R_+}a_{\xi}(|v^{BL}-v_*|+|v^R-v^*|)|p(v)-p(\overline{v})|^2\,d\xi,
		\end{aligned}
		\end{align}
		where
		\[G_{BL}=\int_{\R_+}au^{BL}_\xi p(v|\overline{v})\,d\xi,\quad G_R=\int_{\R_+}au^R_\xi p(v|\overline{v})\,d\xi.\]
		Note that, since $u^{BL}_\xi>0$ and $u^R_\xi>0$, we have $G_{BL}>0$ and $G_{R}>0$.
	\end{lemma}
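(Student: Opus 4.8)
The plan is to separate the three constituent waves inside $\overline{u}_\xi = u^{BL}_\xi + u^R_\xi + u^S_\xi$. The boundary-layer and rarefaction parts of $-\int_{\R_+} a\overline{u}_\xi p(v|\overline{v})\,d\xi$ are exactly $-G_{BL} - G_R$ (and $G_{BL}, G_R \ge 0$ since $u^{BL}_\xi, u^R_\xi > 0$ and $p(v|\overline{v}) \ge 0$ by convexity), so only the shock part $-\int_{\R_+} a u^S_\xi p(v|\overline{v})\,d\xi$ remains. Using $u^S_\xi = -\sqrt{\delta_S}\,a_\xi$ from \eqref{est:a}, this equals $\sqrt{\delta_S}\int_{\R_+} a\,a_\xi\, p(v|\overline{v})\,d\xi$, so the entire left-hand side of \eqref{est:quadratic} reduces, after extracting $-G_{BL}-G_R$, to bounding
\[
\sqrt{\delta_S}\int_{\R_+} a\,a_\xi\, p(v|\overline{v})\,d\xi - \sigma\int_{\R_+} a_\xi\, Q(v|\overline{v})\,d\xi
\]
from above by $-C_*\int_{\R_+} a_\xi |p(v)-p(\overline{v})|^2\,d\xi$ plus admissible errors.

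Next I would invoke the relative-quantity estimates of Lemma \ref{lem:relative_quantity}(3) — valid because \eqref{smallness} and Sobolev embedding make $\|v-\overline{v}\|_{L^\infty}$ small while $v,\overline{v}$ stay bounded above and below — to bound $p(v|\overline{v}) \le \big(\tfrac{\gamma+1}{2\gamma p(\overline{v})} + C|p(v)-p(\overline{v})|\big)|p(v)-p(\overline{v})|^2$ and $Q(v|\overline{v}) \ge \tfrac{p(\overline{v})^{-1/\gamma-1}}{2\gamma}|p(v)-p(\overline{v})|^2 - C|p(v)-p(\overline{v})|^3$. Substituting these, the cubic remainder of $Q$, the cross piece produced by the $C|p(v)-p(\overline{v})|$ correction, and the surplus factor in $1 \le a \le 1+\sqrt{\delta_S}$ all land in the error integrals $C\int a_\xi |p(v)-p(\overline{v})|^3$, $C\delta_S\int a_\xi |p(v)-p(\overline{v})|^2$ and $C(\delta_0+\e_1)\int a|u^S_\xi||p(v)-p(\overline{v})|^2$; here one freely converts between the weights $a_\xi$ and $|u^S_\xi| = \sqrt{\delta_S}\,a_\xi$ and uses $\|p(v)-p(\overline{v})\|_{L^\infty} \le C\e_1$ and $\delta_S \le \delta_0$. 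What remains is the leading quadratic term with coefficient $\sqrt{\delta_S}\,\tfrac{\gamma+1}{2\gamma p(\overline{v})} - \sigma\,\tfrac{p(\overline{v})^{-1/\gamma-1}}{2\gamma}$.

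To finish I would freeze this coefficient at the reference state $v^*$. Since $a_\xi$ concentrates near $\zeta = 0$, i.e.\ near $\xi = (\sigma-\sigma_-)t + X(t) + \beta$, where $v^{BL} \to v_*$ and $v^R \to v^*$, one has $|\overline{v}-v^*| \le |v^{BL}-v_*| + |v^R-v^*| + |v^S-v^*| \le |v^{BL}-v_*| + |v^R-v^*| + C\delta_S$; replacing $p(\overline{v})$ by $p(v^*)$ and $\sigma$ by $\sqrt{-p'(v^*)}$ (recall $|\sigma - \sqrt{-p'(v^*)}| \le C\delta_S$, since $\sigma^2 = -(p(v_+)-p(v^*))/(v_+-v^*) = -p'(v^*) + O(\delta_S)$) costs only $C\big(\delta_S + |v^{BL}-v_*| + |v^R-v^*|\big)\int a_\xi|p(v)-p(\overline{v})|^2\,d\xi$, which is precisely the last two error lines of \eqref{est:quadratic}. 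The reduced coefficient is $\sqrt{\delta_S}\,\tfrac{\gamma+1}{2\gamma p(v^*)} - \sqrt{-p'(v^*)}\,\tfrac{p(v^*)^{-1/\gamma-1}}{2\gamma}$, and a direct computation from $p(v) = v^{-\gamma}$ gives $\sqrt{-p'(v^*)}\,p(v^*)^{-1/\gamma-1} = \gamma/\sqrt{-p'(v^*)}$, which turns its second term into $\tfrac{1}{2\sqrt{-p'(v^*)}}$. Hence the coefficient equals $-C_*$ with $C_* = \tfrac{1}{2\sqrt{-p'(v^*)}} - \sqrt{\delta_S}\,\tfrac{\gamma+1}{2\gamma p(v^*)}$, and $C_* > 0$ provided $\delta_0$ (hence $\delta_S$) is taken small enough.

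The step I expect to be the main obstacle is the bookkeeping in the last two paragraphs: performing the Taylor expansions of $1/p(\overline{v})$, $p(\overline{v})^{-1/\gamma-1}$, and of $\sigma$ about $v^*$ with exactly the right dependence on $\delta_S$, $|v^{BL}-v_*|$ and $|v^R-v^*|$ so that every remainder falls into one of the three prescribed error integrals, while simultaneously recovering the \emph{sharp} constant $C_*$ (not merely some $-c < 0$) and verifying that it stays strictly positive.
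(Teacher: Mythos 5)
Your proposal is correct and follows essentially the same route as the paper's proof: split $\overline{u}_\xi$ into its three constituents to extract $-G_{BL}-G_R$, rewrite the remaining shock integral via $|u^S_\xi| = \sqrt{\delta_S}\,a_\xi$, apply the upper bound on $p(v|\overline{v})$ and the lower bound on $Q(v|\overline{v})$ from Lemma~\ref{lem:relative_quantity}(3), and freeze the coefficients at $v^*$ (and $\sigma$ at $\sigma^*$) with the errors landing in the $\delta_S$, $|v^{BL}-v_*|$, $|v^R-v^*|$, and cubic terms, recovering $C_* = \frac{1}{2\sigma^*} - \sqrt{\delta_S}\frac{\gamma+1}{2\gamma p^*}$ from the identity $\sigma^*(p^*)^{-1/\gamma-1}=\gamma/\sigma^*$. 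The bookkeeping you flagged as the main obstacle is indeed where the paper's proof spends its effort, and your account of how each remainder is absorbed matches the paper's.
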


	\begin{proof}
		Since $\overline{u}_\xi = u^{BL}_\xi+u^R_\xi+u^S_\xi$, we have
		\begin{align*}
			-\int_{\R_+}a\overline{u}_{\xi}p(v|\overline{v})\,d\xi
			=-G_{BL}-G_R-\int_{\R_+}au^S_\xi p(v|\overline{v})\,d\xi.
		\end{align*}
		We now use \eqref{relative-est} to obtain
		\begin{align*}
		&p(v|\overline{v})\le \frac{\gamma+1}{2\gamma p(\overline{v})}|p(v)-p(\overline{v})|^2+C\e_1|p(v)-p(\overline{v})|^2,\\
		&|p(\overline{v})-p^*|\le C(|v^{BL}-v_*|+|v^R-v^*|+|v^S-v^*|)\le C|v^{BL} - v_*|+C\delta_0,
		\end{align*}
		where $p^*:=p(v^*)$. Together with \eqref{est:a}, these estimates yield
		\begin{align*}
		\begin{aligned}
			&-\int_{\R_+}au^S_\xi p(v|\overline{v})\,d\xi \\
			&\quad \le \frac{\gamma+1}{2\gamma p^*}\int_{\R_+}a|u^S_\xi||p(v)-p(\overline{v})|^2\,d\xi+C(\delta_0+\e_1)\int_{\R_+}a|u^S_{\xi}||p(v)-p(\overline{v})|^2\,d\xi\\
			&\qquad + \int_{\R_+} a |u_\xi^S||v^{BL}-v_*||p(v) - p(\bar{v})|^2\,d\xi\\
			&\quad \le \frac{\gamma+1}{2\gamma p^*}\int_{\R_+}(1+\sqrt{\delta_S})\sqrt{\delta_S}a_\xi|p(v)-p(\overline{v})|^2\,d\xi +C(\delta_0+\e_1)\int_{\R_+}a|u^S_{\xi}||p(v)-p(\overline{v})|^2\,d\xi\\
			&\qquad + \int_{\R_+} a |u_\xi^S||v^{BL}-v_*||p(v) - p(\bar{v})|^2\,d\xi.
		\end{aligned}
		\end{align*}
		Therefore, we have
		\begin{equation}\label{upper_p}
		\begin{aligned}
		-\int_{\R_+}a\overline{u}_{\xi}p(v|\overline{v})\,d\xi&\le -G_{BL}-G_R+\frac{\gamma+1}{2\gamma p^*}\int_{\R_+}(1+\sqrt{\delta_S})\sqrt{\delta_S}a_\xi|p(v)-p(\overline{v})|^2\,d\xi\\
		&\quad +  C(\delta_0+\e_1)\int_{\R_+}a|u^S_\xi||p(v)-p(\overline{v})|^2 \, d \xi+\int_{\R_+} a |u_\xi^S||v^{BL}-v_*||p(v) - p(\bar{v})|^2\,d\xi\\
		&\le -G_{BL}-G_R+\frac{\gamma+1}{2\gamma p^*}\int_{\R_+}(1+\sqrt{\delta_S})\sqrt{\delta_S}a_\xi|p(v)-p(\overline{v})|^2\,d\xi\\
		&\quad +C(\delta_0+\e_1)\int_{\R_+}a|u^S_\xi||p(v)-p(\overline{v})|^2 \, d \xi+C\int_{\R_+} a_\xi|v^{BL}-v_*||p(v) - p(\bar{v})|^2\,d\xi.\\
		\end{aligned}
		\end{equation}
		On the other hand, since
		\begin{equation}\label{est:s}
			|\sigma-\sigma^*|\le C\delta_S,\quad \mbox{where}\quad \sigma^*:=\sqrt{-p'(v^*)},
		\end{equation}
		we use \eqref{relative-est} to obtain
		\begin{align}
		\begin{aligned}\label{lower_Q}
			&\sigma\int_{\R_+}a_{\xi} Q(v|\overline{v})\,d\xi \ge \sigma \int_{\R_+}a_\xi\left(\frac{p(\overline{v})^{-\frac{1}{\gamma}-1}}{2\gamma}-\frac{1+\gamma}{3\gamma^2}p(\overline{v})^{-\frac{1}{\gamma}-2}|p(v)-p(\overline{v})|\right)|p(v)-p(\overline{v})|^2\,d\xi\\
			&\quad \ge\frac{\sigma^*(p^*)^{-\frac{1}{\gamma}-1}}{2\gamma} \int_{\R_+}a_\xi|p(v)-p(\overline{v})|^2\,d\xi -C\int_{\R_+}a_\xi|\overline{v}-v^*||p(v)-p(\overline{v})|^2\,d\xi\\
			&\qquad-C\int_{\R_+}a_\xi|p(v)-p(\overline{v})|^3\,d\xi\\
			&\quad \ge\frac{\sigma^*(p^*)^{-\frac{1}{\gamma}-1}}{2\gamma} \int_{\R_+}a_\xi|p(v)-p(\overline{v})|^2\,d\xi -C\delta_S\int_{\R_+}a_\xi|p(v)-p(\overline{v})|^2\,d\xi\\
			&\qquad- C\int_{\R_+} a_\xi (|v^{BL}-v_*|+|v^{R}-v^*|)|p(v) - p(\bar{v})|^2\,d\xi -C\int_{\R_+}a_\xi|p(v)-p(\overline{v})|^3\,d\xi.
		\end{aligned}
		\end{align}
		Here, we used
		\begin{align*}
			\left|\sigma\frac{p(\overline{v})^{-\frac{1}{\gamma}-1}}{2\gamma}-\sigma^*\frac{(p^*)^{-\frac{1}{\gamma}-1}}{2\gamma} \right| \le C\delta_S +C|v^{BL}-v_*|+C|v^R-v^*|.
		\end{align*}
		Now, as $\frac{\sigma^*(p^*)^{-\frac{1}{\gamma}-1}}{2\gamma}=\frac{1}{2\sigma^*}$ by definition, we introduce a constant
		\[C_*:= \frac{1}{2\sigma^*}-\sqrt{\delta_S}\frac{\gamma+1}{2\gamma p^*}>0\]
		and combine \eqref{upper_p} and \eqref{lower_Q} to derive \eqref{est:quadratic}.
	\end{proof}

	Therefore, we use Lemma \ref{lem:quadratic} to control the first two terms in $J^\textup{bad}$ and the second term in $J^\textup{good}$ as
	\begin{align*}
		&\int_{\R_+}a_\xi (p(v)-p(\overline{v}))(u-\overline{u})\,d\xi -\int_{\R_+}a \overline{u}_\xi p(v|\overline{v})\,d\xi -\sigma\int_{\R_+}a_\xi Q(v|\overline{v})\,d\xi\\
		&\le \int_{\R_+}a_\xi\left[(p(v)-p(\overline{v}))(u-\overline{u})-C_*|p(v)-p(\overline{v})|^2\right]\,d\xi -G_{BL}-G_R \\
		&\quad +C(\delta_0+\e_1)\int_{\R_+}a|u^S_{\xi}||p(v)-p(\overline{v})|^2d\xi +C \delta_S\int_{\R_+}a_\xi |p(v)-p(\overline{v})|^2d\xi +C\int_{\R_+}a_\xi |p(v)-p(\overline{v})|^3d\xi\\
		&\quad + C\int_{\R_+} a_\xi (|v^{BL}-v_*|+|v^{R}-v^*|)|p(v) - p(\bar{v})|^2\,d\xi.
	\end{align*}
	Since
	\[(p-\overline{p})(u-\overline{u})-C_*|p-\overline{p}|^2=-C_*\left|(p-\overline{p})-\frac{u-\overline{u}}{2C_*}\right|^2+\frac{1}{4C_*}|u-\overline{u}|^2,\]
	we finally derive
	\begin{align}
	\begin{aligned}\label{est:two_bad}
		&\int_{\R_+}a_\xi (p(v)-p(\overline{v}))(u-\overline{u})\,d\xi -\int_{\R_+}a \overline{u}_\xi p(v|\overline{v})\,d\xi -\sigma\int_{\R_+}a_\xi Q(v|\overline{v})\,d\xi\\
		&\le -C_*\int_{\R_+}a_\xi \left|(p-\overline{p})-\frac{u-\overline{u}}{2C_*}\right|^2\,d\xi + \frac{1}{4C_*}\int_{\R_+}a_\xi |u-\overline{u}|^2\,d\xi -G_{BL}-G_R\\
		&\quad +C(\delta_0+\e_1)\int_{\R_+}a|u^S_{\xi}||p(v)-p(\overline{v})|^2d\xi +C \delta_S\int_{\R_+}a_\xi |p(v)-p(\overline{v})|^2d\xi\\
		&\quad+C\int_{\R_+}a_\xi |p(v)-p(\overline{v})|^3d\xi +C\int_{\R_+} a_\xi (|v^{BL}-v_*|+|v^{R}-v^*|)|p(v) - p(\bar{v})|^2\,d\xi.
	\end{aligned}
	\end{align}
	Therefore, we use \eqref{est:two_bad} to further estimate the right-hand side of \eqref{eq:rel_ent_1} as
	\begin{align*}
	\frac{d}{dt}\int_{\R_+}a\eta(U|\overline{U})\,d\xi \le \dot{X}Y+B-G+J^\textup{bd},
	\end{align*}
	where
	\begin{align*}
	 	B&:=\frac{1}{4C_*}\int_{\R_+}a_\xi |u-\overline{u}|^2\,d\xi  +C(\delta_0+\e_1)\int_{\R_+}a|u^S_{\xi}||p(v)-p(\overline{v})|^2\,d\xi \\
	 	&\quad +C \delta_S\int_{\R_+}a_\xi |p(v)-p(\overline{v})|^2\,d\xi +C\int_{\R_+}a_\xi |p(v)-p(\overline{v})|^3\,d\xi-\int_{\R_+}a_\xi \frac{u-\overline{u}}{v}(u-\overline{u})_\xi\,d\xi\\
	 	&\quad+\int_{\R_+}a_\xi(u-\overline{u})(v-\overline{v})\frac{\overline{u}_\xi}{v\overline{v}}\,d\xi +\int_{\R_+}a(u-\overline{u})_\xi\frac{v-\overline{v}}{v\overline{v}}\overline{u}_\xi\,d\xi\\
		&\quad + C \int_{\R_+}a_{\xi}(|v^{BL} - v_*|+|v^R-v^*|)|p(v)-p(\overline{v})|^2\,d\xi-\int_{\R_+}a(u-\overline{u})S\,d\xi,
	 \end{align*}
 	and
 	\begin{align*}
 		G&:=C_*\int_{\R_+}a_\xi \left|(p-\overline{p})-\frac{u-\overline{u}}{2C_*}\right|^2\,d\xi +\frac{\sigma}{2}\int_{\R_+}a_\xi|u-\overline{u}|^2\,d\xi +G_{BL}+G_R+\int_{\R_+}\frac{a}{v}|(u-\overline{u})_\xi|^2\,d\xi.
 	\end{align*}
	We now decompose the terms in $B$ and $G$ as
	\begin{align*}
		B = \sum_{i=1}^9 B_{i},\quad G = G_1+G_2+G_{BL}+G_R+D_{u_1}.
	\end{align*}
	Precisely, we define
	\begin{align*}
		\begin{split}
		&\begin{aligned}
			B_1 &= \frac{1}{4C_*}\int_{\R_+}a_\xi |u-\overline{u}|^2\,d\xi,
			&&B_2 =C(\delta_0+\e_1)\int_{\R_+}a|u^S_{\xi}||p(v)-p(\overline{v})|^2\,d\xi,\\
			B_3&=C \delta_S\int_{\R_+}a_\xi |p(v)-p(\overline{v})|^2\,d\xi,
			&&B_4= C\int_{\R_+}a_\xi |p(v)-p(\overline{v})|^3\,d\xi,\\
			B_5&= -\int_{\R_+}a_\xi \frac{u-\overline{u}}{v}(u-\overline{u})_\xi\,d\xi,
			&& B_6=\int_{\R_+}a_\xi(u-\overline{u})(v-\overline{v})\frac{\overline{u}_\xi}{v\overline{v}}\,d\xi,\\
			B_7&=\int_{\R_+}a(u-\overline{u})_\xi\frac{v-\overline{v}}{v\overline{v}}\overline{u}_\xi\,d\xi, && B_8=  C \int_{\R_+}a_{\xi}(|v^{BL} - v_*|+|v^R-v^*|)|p(v)-p(\overline{v})|^2\,d\xi,\\
			B_9&=-\int_{\R_+}a(u-\overline{u})S\,d\xi,
		\end{aligned}
		\end{split}
	\end{align*}
	and
	\begin{align*}
		\begin{split}
			&\begin{aligned}
				&G_1 =C_*\int_{\R_+}a_\xi \left|(p-\overline{p})-\frac{u-\overline{u}}{2C_*}\right|^2\,d\xi,
				\quad G_2 =\frac{\sigma}{2}\int_{\R_+}a_\xi|u-\overline{u}|^2\,d\xi,\\
				&G_{BL}=\int_{\R_+}au^{BL}_\xi p(v|\overline{v})\,d\xi,
				\quad G_{R}=\int_{\R_+}a u^{R}_{\xi}p(v|\overline{v})\,d\xi,\quad D_{u_1} =\int_{\R_+}\frac{a}{v}|(u-\overline{u})_\xi|^2\,d\xi.
			\end{aligned}
		\end{split}
	\end{align*}
	Finally, we split $Y$, which is,
	\begin{align*}
		Y=-\int_{\R_+}a_\xi \left(\frac{|u-\overline{u}|^2}{2}+Q(v|\overline{v})\right)\,d\xi+\int_{\R_+}au^S_\xi(u-\overline{u})\,d\xi-\int_{\R_+}ap'(\overline{v})v^S_\xi(v-\overline{v})\,d\xi,
	\end{align*}
	as
	\[Y=\sum_{i=1}^6Y_i,\]
	where
	\begin{align*}
		&Y_1:=\int_{\R_+}au^S_\xi(u-\overline{u})\,d\xi,\quad Y_2:=\frac{1}{\sigma}\int_{\R_+}ap'(v^S)v^S_\xi(u-\overline{u})\,d\xi,\\
		&Y_3:=\frac{1}{\sigma}\int_{\R_+}a\big(p'(\overline{v}) - p'(v^S)\big)v^S_\xi(u-\overline{u})d\xi, \quad Y_4:=-\int_{\R_+}ap'(\overline{v})v^S_\xi\left(v-\overline{v}+\frac{2C_*}{\sigma}(p(v)-p(\overline{v}))\right)d\xi,\\
        &Y_5:=\frac{2C_*}{\sigma}\int_{\R_+}ap'(\overline{v})v^S_\xi\left(p(v)-p(\overline{v})-\frac{u-\overline{u}}{2C_*}\right)\,d\xi, \quad Y_6:= -\int_{\R_+}a_\xi \left(\frac{|u-\overline{u}|^2}{2}+Q(v|\overline{v})\right)\,d\xi.\\
	\end{align*}
	In particular, the equation \eqref{ODE_X} for the shift $X$ can be written as $\dot{X}:=-\frac{M}{\delta_S}(Y_1+Y_2)$, which yields
	\[\dot{X}Y =-\frac{\delta_S}{M}|\dot{X}|^2+\dot{X}\sum_{i=3}^6Y_i.\]
	In summary, we obtain the following estimate on the weighted relative entropy:
	\begin{align}
	\begin{aligned}\label{eq:rel_ent_2}
		\frac{d}{dt}\int_{\R_+}a\eta(U|\overline{U})\,d\xi&\le-\frac{\delta_S}{2M}|\dot{X}|^2+B_1-G_2-\frac{5}{8}D_{u_1}\\
		&\quad -\frac{\delta_S}{2M}|\dot{X}|^2+\dot{X}\sum_{i=3}^6 Y_i+\sum_{i=2}^9 B_i-G_1-G_{BL}-G_R-\frac{3}{8}D_{u_1}+J^\textup{bd},
	\end{aligned}
	\end{align}
where the terms in the first line of the right-hand side are of leading order, and therefore, we need to estimate it carefully as in the following subsection. The terms in the second line are of a high order, and therefore, they can be treated in a more rough way. However, the $B_7$ term requires a more detailed analysis because the strength of the boundary layer solution  is not weak.

	\subsection{Estimate on the leading order terms}
	We first estimate the leading order terms in \eqref{eq:rel_ent_2}:
	\begin{equation}\label{leading_order}
	-\frac{\delta_S}{2M}|\dot{X}|^2+B_1-G_2-\frac{5}{8}D_{u_1}.\end{equation}
	
	\begin{lemma}\label{lem:leading}
		For sufficiently large $\beta>0$, there exists a constant $C_1>0$ independent of $\delta_S$ such that
		\begin{equation}\label{est:leading}
		-\frac{\delta_S}{2M}|\dot{X}|^2+B_1-G_2-\frac{5}{8}D_{u_1}\le -C_1G_S,
		\end{equation}
		where
		\[G_S =\int_{\R_+}|u^S_\xi||u-\overline{u}|^2\,d\xi.\]
	\end{lemma}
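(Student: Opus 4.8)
The plan is to prove \eqref{est:leading} by showing that the diffusion term $-\tfrac58 D_{u_1}$, through a Poincar\'e-type inequality applied after a shock-adapted change of variables, dominates $B_1-G_2$ \emph{with a definite margin}, leaving a residual that involves only a shock-weighted average of $u-\overline u$; the term $-\frac{\delta_S}{2M}|\dot X|^2$, by the very construction \eqref{ODE_X} of the shift, then absorbs that residual once $M$ is chosen large.

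I would first rewrite $B_1-G_2$ in closed form. Since $a_\xi=-u^S_\xi/\sqrt{\delta_S}$ and $u^S_\xi=-\sigma v^S_\xi$, we have $|u^S_\xi|=\sigma v^S_\xi$ and $\int_{\R_+}a_\xi|u-\overline u|^2\,d\xi=\frac{1}{\sqrt{\delta_S}}G_S$, so $B_1-G_2=\frac{1}{\sqrt{\delta_S}}\big(\frac{1}{4C_*}-\frac{\sigma}{2}\big)G_S$. A short expansion of $C_*=\frac{1}{2\sigma^*}-\sqrt{\delta_S}\frac{\gamma+1}{2\gamma p^*}$ together with $|\sigma-\sigma^*|\le C\delta_S$ gives $\frac{1}{4C_*}-\frac{\sigma}{2}=\sqrt{\delta_S}\,C_3+O(\delta_S)$ with $C_3:=\frac{(\gamma+1)(\sigma^*)^2}{2\gamma p^*}=\frac{\gamma+1}{2v^*}$, hence $B_1-G_2\le(C_3+C\sqrt{\delta_S})G_S$. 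Next I would change variables $\xi\mapsto y$ by the viscous shock as in \eqref{ydef}; this maps $\R_+$ monotonically onto an interval $[y_0,1)$, and the a priori bound $|X(t)|\le C\e_1 t$ forces $\xi=0$ to sit at distance $\ge\beta$ from $U^*$, so by Lemma~\ref{lem:viscous_shock} one gets $0<y_0\le Ce^{-C\delta_S\beta}$, whence $1-y_0\ge\tfrac12$ for $\beta$ large. Writing $W(y):=(u-\overline u)(\xi(y))$ and using the profile identity $v^S_\xi=\frac{v^S h(v^S)}{\sigma}(v^S-v^*)(v_+-v^S)$ with $h>0$, $h(v^*)=\tfrac12 p''(v^*)+O(\delta_S)$, both $G_S$ and $D_{u_1}$ become weighted integrals on $[y_0,1]$; the $D_{u_1}$-weight dominates $\underline c\,y(1-y)\ge\underline c\,(y-y_0)(1-y)$ with $\underline c$ a positive multiple of $c_S\delta_S$, where $c_S:=(v_+-v^*)/\delta_S$. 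Applying Lemma~\ref{lem:Poincare} on $[y_0,1]$ then yields $G_S\le\frac{v^*}{\gamma+1}(1+C\eta_0)D_{u_1}+\sigma c_S\delta_S|\langle W\rangle|^2$, where $\langle W\rangle$ is the average of $W$ on $[y_0,1]$ and $\eta_0$ collects the small parameters $\delta_0,\e_1,\sqrt{\delta_S},1/\beta$. Since $\tfrac58\cdot\frac{\gamma+1}{v^*}>\frac{\gamma+1}{2v^*}=C_3$, combining the last two bounds and shrinking $\delta_0,\e_1$ produces
\[
B_1-G_2-\tfrac58 D_{u_1}\le-2C_1 G_S+C'\delta_S|\langle W\rangle|^2
\]
for constants $C_1,C'>0$ depending only on $\gamma$ and $v^*$; this strict inequality is exactly why the fraction $\tfrac58$ (rather than $\tfrac12$) of $D_{u_1}$ is split off in \eqref{eq:rel_ent_2}.

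It then remains to absorb $C'\delta_S|\langle W\rangle|^2$ into $-\frac{\delta_S}{2M}|\dot X|^2$. Using $u^S_\xi=-\sigma v^S_\xi$ one finds $Y_1+Y_2=-\int_{\R_+}a\,\Theta\,v^S_\xi(u-\overline u)\,d\xi$ with $\Theta:=\frac{\sigma^2-p'(v^S)}{\sigma}$, and the key point is that $\Theta$ lies between positive constants and varies by only $O(\delta_S)$ along the profile while $a$ varies by $O(\sqrt{\delta_S})$, so $a\Theta=2\sigma^*+r$ with $\|r\|_{L^\infty}\le C\sqrt{\delta_S}$. In the $y$-variable, $\dot X=-\frac{M}{\delta_S}(Y_1+Y_2)=Mc_S\int_{y_0}^1 a\Theta W\,dy$, and since $\big|\int_{y_0}^1 rW\,dy\big|\le C\sqrt{\delta_S}\,\|W\|_{L^2(y_0,1)}\le C\sqrt{G_S}$, one obtains $|\dot X|\ge cM\big(|\langle W\rangle|-C\sqrt{G_S}\big)$ and hence $\frac{\delta_S}{2M}|\dot X|^2\ge cM\delta_S|\langle W\rangle|^2-CM\delta_S G_S$. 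Choosing $M$ large so that $cM\ge C'$, and then $\delta_0$ small so that $CM\delta_S\le C_1$, gives $-\frac{\delta_S}{2M}|\dot X|^2+C'\delta_S|\langle W\rangle|^2-C_1 G_S\le0$; added to the previous display this yields
\[
-\frac{\delta_S}{2M}|\dot X|^2+B_1-G_2-\tfrac58 D_{u_1}\le-C_1 G_S,
\]
which is \eqref{est:leading}. This step also fixes the constant $M$ in \eqref{ODE_X}.

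The main obstacle is precisely the constant bookkeeping in the last two steps: one must check that the diffusion beats $B_1-G_2$ with a margin, i.e.\ that the comparison $\tfrac58\cdot\frac{\gamma+1}{v^*}>C_3$ survives the $O(\eta_0)$ corrections, and, more delicately, that the residual $\delta_S|\langle W\rangle|^2$—which is comparable to $G_S$ itself and hence \emph{not} small—is genuinely controlled by the shift. The latter hinges on the fact that the weight $a\Theta$ hidden in $Y_1+Y_2$ differs from a constant only by $O(\sqrt{\delta_S})$, so that the mismatch between the shock-weighted average appearing in $\dot X$ and the plain average produced by Lemma~\ref{lem:Poincare} costs $O(\sqrt{G_S})$ rather than $O(\sqrt{D_{u_1}})$; a further subtlety, absent in the whole-line setting, is that the Poincar\'e inequality must be run on the time-dependent interval $[y_0,1]$, which is why the bound $y_0\le Ce^{-C\delta_S\beta}$—and thus the largeness of $\beta$—is needed.
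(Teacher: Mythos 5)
Your proposal is correct and follows essentially the same strategy as the paper's proof: the shock-adapted change of variable $y=(u^*-u^S)/\delta_S$ mapping $\R_+$ onto $[y_0,1]$ with $y_0\le Ce^{-C\delta_S\beta}$, the expansion of $B_1-G_2$ to leading order $C_3\,G_S$ with $C_3=(\sigma^*)^3\alpha^*=\tfrac{\gamma+1}{2v^*}$, the lower bound on $D_{u_1}$ via the viscous-shock profile ODE together with the Poincar\'e inequality of Lemma \ref{lem:Poincare} on the truncated interval, and the absorption of the residual square-of-average by $-\tfrac{\delta_S}{2M}|\dot X|^2$ after comparing $\dot X$ with $2M\int f\,dy$ up to an $O(\sqrt{\delta_S})$ error. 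The only differences are presentational: you name the constant $C_3$ and derive the profile identity $v^S_\xi=\tfrac{v^Sh(v^S)}{\sigma}(v^S-v^*)(v_+-v^S)$ from scratch, whereas the paper writes $(\sigma^*)^3\alpha^*$ and cites \cite{HKKL_pre2} for the corresponding estimate; and you phrase the last step as ``choose $M$ large'' rather than fixing $M=\tfrac32(\sigma^*)^3\alpha^*$ explicitly, which is harmless since $M$ depends only on $\gamma$ and $v^*$ and must be fixed before $\delta_0$ is shrunk.
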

	\begin{proof}
	The desired estimate uses the localization by derivatives of weight and shocks.  For that,  we define the change of variable $y=y(\xi)$ for each fixed time $t$ as
	\beq\label{ydef}
	y:=\frac{u^*-u^S(\xi-(\sigma-\sigma_-)t-X(t)-\beta)}{\delta_S},
	\eeq
	which satisfies
	\[\frac{dy}{d\xi} = -\frac{u^S_\xi(\xi-(\sigma-\sigma_-)t-X(t)-\beta)}{\delta_S}>0,\]
	and
	\[y(0)=\frac{u^*-u^S(-(\sigma-\sigma_-)t-X(t)-\beta)}{\delta_S}=:y_0>0,\quad \lim_{\xi\to\infty}y(\xi)=1.\]
	We note that the estimate \eqref{est_boundary} implies that for any $t>0$,
	\[y_0\le Ce^{-C\delta_S\beta},\]
	which can be arbitrarily close to 0 by choosing large enough $\beta$. Moreover, we introduce 
	\[f:=(u(t,\cdot)-\overline{u}(t,\cdot))\circ y^{-1},\]
	which is a perturbation $u-\overline{u}$ in terms of the $y$ variable. We estimate each term in \eqref{leading_order} separately.\\

	\noindent $\bullet$ (Estimate of $-\frac{\delta_S}{2M}|\dot{X}|^2$): First, observe that the weight function can be written as $a=1+\sqrt{\delta_S}y$. Then, using the auxiliary variable $y$ and $f$, we represent the terms $Y_1$ and $Y_2$ as
	\begin{align*}
		&Y_1 = \int_{\R_+}a u^S_{\xi}(u-\overline{u})\,d\xi=-\delta_S\int_{y_0}^1(1+\sqrt{\delta_S}y)f\,dy,\\
		&Y_2 = -\frac{1}{\sigma^2}\int_{\R_+}ap'(v^S)u^S_\xi(u-\overline{u})\,d\xi= \frac{\delta_S}{\sigma^2}\int_{y_0}^1(1+\sqrt{\delta_S}y)p'(v^S)f\,dy.
	\end{align*}
	Then, using $|\sigma^2+p'(v^S)|\le C\delta_S$, one can obtain
	\begin{align*}
	\left|\dot{X}-2M\int_{y_0}^1 f\,dy\right|\le \frac{M}{\delta_S}\sum_{i=1}^2\left|Y_i+\delta_S\int_{y_0}^1f\,dy\right|\le C\sqrt{\delta_S}\int_{y_0}^1 |f|\,dy.
	\end{align*}
	This and the algebraic inequality $\frac{a^2}{2} - b^2 \leq (b-a)^2$ for all $a, b \geq 0$ imply
	\[-\frac{\delta_S}{2M}|\dot{X}|^2\le -M\delta_S \left(\int_{y_0}^1 f\,dy\right)^2+C\delta_S^2\int_{y_0}^1|f|^2\,dy.\]
	
	\noindent $\bullet$ (Estimate of $B_1-G_2$): Recalling the definition of $B_1$ and $G_2$, we have
	\[B_1-G_2 = \left(\frac{1}{4C_*}-\frac{\sigma}{2}\right)\int_{\R_+}a_\xi|u-\overline{u}|^2\,d\xi,\]
	where
	\[
		C_*= \frac{1}{2\sigma^*}-\sqrt{\delta_S}\frac{\gamma+1}{2\gamma p^*}=\frac{1}{2\sigma^*}-\sqrt{\delta_S}\alpha^*\sigma^*,\quad \alpha^*:=\frac{\gamma+1}{2\gamma\sigma^*p^*}.
	\]
	Using \eqref{est:s} and the smallness of $\delta_S$, we obtain the following estimate
	\begin{equation}\label{est:C}
	\frac{1}{4C_*}-\frac{\sigma}{2} = \frac{\sigma^*}{2}\left(\frac{1}{1 - 2\sqrt{\delta_S} \alpha^* (\sigma^*)^2}\right) - \frac{\sigma}{2} \le (\sigma^*)^3\alpha^*\sqrt{\delta_S}+C\delta_S.
	\end{equation}
	Therefore, we have
	\[B_1-G_2=\sqrt{\delta_S}\left(\frac{1}{4C_*}-\frac{\sigma}{2}\right)\int_{y_0}^1 |f|^2\,dy\le \left((\sigma^*)^3\alpha^*\delta_S+C\delta_S^{3/2}\right)\int_{y_0}^1|f|^2\,dy.\]
	
	\noindent $\bullet$ (Estimate of $D_{u_1}$): We use $a\ge 1$ and the change of variables to derive
	\begin{align*}
		D_{u_1}&\ge\int_{\R_+}\left(\frac{1}{v^S} + \left(\frac{1}{v} - \frac{1}{\bar{v}}\right)\right)|(u-\overline{u})_\xi|^2d\xi + \int_{\R_+} \left(\frac{1}{\bar{v}} - \frac{1}{v^S}\right)|(u-\overline{u})_\xi|^2d\xi\\
		&\ge \int_{y_0}^1 |\pa_y f|^2\left(\frac{1}{v^S} + \left(\frac{1}{v} - \frac{1}{\bar{v}}\right)\right)\frac{dy}{d\xi}dy.
	\end{align*}
	Here, we used $\bar{v} - v^S = v^{BL} + v^R - v^* - v_* \leq 0$.
	Then, following the estimates in \cite{HKKL_pre2}, we have 
	\[
	\left|\frac{1}{y(1-y)}\frac{1}{v^S}\left(\frac{dy}{d\xi}\right) - \delta_S (\sigma^*)^3 \alpha^*  \right| \le C\delta_S^2.
	\]
	This implies that
	\[D_{u_1}\ge (\sigma^*)^3\alpha^* \delta_S(1-C(\delta_S+\e_1))\int_{y_0}^1 (y-y_0)(1-y)|\pa_yf|^2\,dy .\]
	Now, we use Poincar\'e type inequality in Lemma \ref{lem:Poincare} with $c=y_0, d=1$ to have
	\[
		D_{u_1} \ge  2(\sigma^*)^3\alpha^* \delta_S(1-C(\delta_S+\e_1))\left(\int_{y_0}^1 |f|^2\,dy - \frac{1}{1-y_0}\left(\int_{y_0}^1 f\,dy\right)^2 \right).
	\]
	Here, we used the identity:
	\[
	\int_{y_0}^1 \left|f - \frac{1}{1-y_0}\int_{y_0}^1 f\,dy\right|^2\,dy = \int_{y_0}^1 |f|^2\,dy - \frac{1}{1-y_0}\left(\int_{y_0}^1 f\,dy\right)^2.
	\]
	Then, gathering all the estimates and using the smallness of $\delta_S$ and $\e_1$, we obtain
	\begin{align*}
	-\frac{\delta_S}{2M}&|\dot{X}|^2+B_1-G_2-\frac{5}{8}D_{u_1}\\
	&\le -M\delta_S\left(\int_{y_0}^1f\,dy\right)^2+((\sigma^*)^3\alpha^*\delta_S+C\delta_S^{3/2}+C\delta_S^2)\int_{y_0}^1|f|^2\,dy\\
	&\quad -\frac{9}{8}(\sigma^*)^3\alpha^*\delta_S\left(\int_{y_0}^1|f|^2\,dy-\frac{1}{1-y_0}\left(\int_{y_0}^1f\,dy\right)^2\right)\\
	&\le -M\delta_S\left(\int_{y_0}^1f\,dy\right)^2-\frac{1}{16}(\sigma^*)^3\alpha^*\delta_S\int_{y_0}^1|f|^2\,dy+\frac{9(\sigma^*)^3\alpha^*\delta_S}{8(1-y_0)}\left(\int_{y_0}^1f\,dy\right)^2.
	\end{align*}
	 
	Now, by choosing $M=\frac{3}{2}(\sigma^*)^3\alpha^*$, and $\beta$ sufficiently large so that $y_0<\frac{1}{4}$ to derive
	\[-\frac{\delta_S}{2M}|\dot{X}|^2+B_1-G_2-\frac{5}{8}D_{u_1}\le -\frac{(\sigma^*)^3\alpha^*}{16}\delta_S\int_{y_0}^1|f|^2\,dy=-\frac{(\sigma^*)^3\alpha^*}{16}\int_{\R_+}|u^S_\xi||u-\overline{u}|^2\,d\xi.\]
	Therefore, \eqref{est:leading} holds for $C_1 = \frac{(\sigma^*)^3\alpha^*}{16}$.
	\end{proof}
	
	\subsection{Estimate on the remaining terms}
	We now present the estimate on the remaining terms of \eqref{eq:rel_ent_2}. First, we use Cauchy-Schwarz inequality to bound the remaining term as
	\begin{align}
	\begin{aligned}\label{remaining}
	-\frac{\delta_S}{2M}&|\dot{X}|^2+\dot{X}\sum_{i=3}^6 Y_i+\sum_{i=2}^9 B_i -G_1-G_{BL}-G_R-\frac{3}{8}D_{u_1}+J^{\textup{bd}}\\
	&\le -\frac{\delta_S}{4M}|\dot{X}|^2+\frac{C}{\delta_S}\sum_{i=3}^6|Y_i|^2+\sum_{i=2}^9 B_i-G_1-G_{BL}-G_R-\frac{3}{8}D_{u_1}+J^{\textup{bd}}.
	\end{aligned}
	\end{align}
	
In the next two lemmas, we estimate the terms on the right-hand side of \eqref{remaining}.

\begin{lemma}\label{lem:rem1}
	For sufficiently small $\delta_0$ and $\e_1$, we have
	\begin{align*}
	&\frac{C}{\delta_S}|Y_3|^2 \le C\e_1^2 D_{u_1} + C \bigg((\delta_{BL}^4 + \delta_R^4)\delta_S^2 e^{-c\delta_St} + \frac{\delta_{BL}^4\delta_S^2}{(1+\delta_{BL}t)^4} + \delta_R^4 \delta_S^2 e^{-ct}\bigg),\\
	&\frac{C}{\delta_S}|Y_4|^2 \le C\delta_S (G_1 + C_1G_S) + C \bigg((\delta_{BL}^4 + \delta_R^4)\delta_S^2 e^{-c\delta_St} + \frac{\delta_{BL}^4\delta_S^2}{(1+\delta_{BL}t)^4} + \delta_R^4 \delta_S^2 e^{-ct}\bigg)\\
	&\phantom{\frac{C}{\delta_S}|Y_4|^2 \le C\delta_S} +C\e_1^2 \int_{\R_+} |(p(v)-p(\bar{v}))_\xi|^2\,d\xi, \\
	&\frac{C}{\delta_S} \sum_{i=5}^6 |Y_i|^2 \le C(\delta_S^{1/2}+\e_1^2)(G_1 + C_1 G_S).
	\end{align*}
\end{lemma}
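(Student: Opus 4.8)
The plan is to bound $Y_3$, $Y_4$, and the pair $Y_5,Y_6$ separately, the recurring tools being: the shock relation $u^S_\xi=-\sigma v^S_\xi$, so that by \eqref{est:a} one has $v^S_\xi=\frac{\sqrt{\delta_S}}{\sigma}a_\xi$, $|u^S_\xi|=\sqrt{\delta_S}\,a_\xi$, and $\int_{\R_+}v^S_\xi\,d\xi,\ \int_{\R_+}|u^S_\xi|\,d\xi\le C\delta_S$, $\int_{\R_+}a_\xi\,d\xi\le C\sqrt{\delta_S}$; Cauchy--Schwarz in the weight $v^S_\xi$ (or $a_\xi$); Lemma~\ref{lem:relative_quantity}, which gives $Q(v|\overline{v}),p(v|\overline{v})\le C|p(v)-p(\overline{v})|^2$, together with the split $|p(v)-p(\overline{v})|^2\le 2\big|(p-\overline{p})-\frac{u-\overline{u}}{2C_*}\big|^2+C|u-\overline{u}|^2$, so that $\int_{\R_+}v^S_\xi|p(v)-p(\overline{v})|^2\,d\xi\le C(G_1+G_S)$ and $\int_{\R_+}a_\xi|u-\overline{u}|^2\,d\xi=\frac{1}{\sqrt{\delta_S}}G_S$; the one-dimensional Sobolev bound $\|f\|_{L^\infty(\R_+)}^2\le 2\|f\|_{L^2}\|f_\xi\|_{L^2}$ combined with the a priori assumption \eqref{smallness}, yielding $\|u-\overline{u}\|_{L^\infty}^2\le C\e_1 D_{u_1}^{1/2}$ and $\|p(v)-p(\overline{v})\|_{L^\infty}^2\le C\e_1\|(p(v)-p(\overline{v}))_\xi\|_{L^2}$; the wave-interaction bounds of Lemma~\ref{lem:wave-interaction}; and Young's inequality.

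For $Y_3$ the point is that $p'(\overline{v})-p'(v^S)=O(|\overline{v}-v^S|)=O(|v^{BL}-v_*|+|v^R-v^*|)$ since $\overline{v}-v^S=v^{BL}+v^R-v_*-v^*$, so that $|Y_3|\le C\|u-\overline{u}\|_{L^\infty}W$ with $W:=\int_{\R_+}v^S_\xi\big(|v^{BL}-v_*|+|v^R-v^*|\big)\,d\xi$. Since $v^S_\xi(v_*-v^{BL})$ and $v^S_\xi(v^*-v^R)$ are each dominated by the positive symmetric combinations appearing in parts (2) and (3) of Lemma~\ref{lem:wave-interaction}, one gets $W\le C\big(\delta_{BL}\delta_S e^{-c\delta_S t}+\frac{\delta_{BL}\delta_S}{1+\delta_{BL}t}+\delta_R\delta_S e^{-ct}\big)$. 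Squaring, inserting $\|u-\overline{u}\|_{L^\infty}^2\le C\e_1 D_{u_1}^{1/2}$, dividing by $\delta_S$, and applying Young's inequality to peel off $C\e_1^2D_{u_1}$ leaves $\frac{C}{\delta_S^2}W^4$, whose expansion is exactly the claimed remainder (raising each factor in $W$ to the fourth power keeps $e^{-c\delta_S t}$, $e^{-ct}$ and $(1+\delta_{BL}t)^{-4}$ all integrable in $t$).

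The term $Y_4$ is the delicate one: it exploits the near-cancellation built into the choice of $C_*$. Writing $v-\overline{v}=\frac{1}{p'(\overline{v})}\big((p(v)-p(\overline{v}))-p(v|\overline{v})\big)$ gives
\[
v-\overline{v}+\frac{2C_*}{\sigma}\big(p(v)-p(\overline{v})\big)=\Big(\frac{1}{p'(\overline{v})}+\frac{2C_*}{\sigma}\Big)\big(p(v)-p(\overline{v})\big)-\frac{p(v|\overline{v})}{p'(\overline{v})},
\]
and the coefficient $\frac{1}{p'(\overline{v})}+\frac{2C_*}{\sigma}$ vanishes at the reference state $v^*$: with $C_*=\frac{1}{2\sigma^*}-O(\sqrt{\delta_S})$, $\sigma=\sigma^*+O(\delta_S)$ and $p'(\overline{v})=-(\sigma^*)^2+O(\delta_S+|v^{BL}-v_*|+|v^R-v^*|)$ it is $O(\sqrt{\delta_S}+|v^{BL}-v_*|+|v^R-v^*|)$, while $p(v|\overline{v})=O(|p(v)-p(\overline{v})|^2)$ by Lemma~\ref{lem:relative_quantity}. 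Hence $|Y_4|$ splits into three pieces, $C\sqrt{\delta_S}\int_{\R_+}v^S_\xi|p(v)-p(\overline{v})|\,d\xi$, $C\int_{\R_+}v^S_\xi(|v^{BL}-v_*|+|v^R-v^*|)|p(v)-p(\overline{v})|\,d\xi$, and $C\int_{\R_+}v^S_\xi|p(v)-p(\overline{v})|^2\,d\xi$. The first and third are controlled by Cauchy--Schwarz in the weight $v^S_\xi$, using $\int v^S_\xi\,d\xi\le C\delta_S$, $\|p(v)-p(\overline{v})\|_{L^\infty}\le C\e_1$ and $\int v^S_\xi|p(v)-p(\overline{v})|^2\,d\xi\le C(G_1+G_S)$; after squaring and dividing by $\delta_S$ they fall under $C(\delta_S+\e_1^2)(G_1+C_1G_S)$. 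The middle piece is treated exactly as $Y_3$: extract $\|p(v)-p(\overline{v})\|_{L^\infty}\le C\e_1^{1/2}\|(p(v)-p(\overline{v}))_\xi\|_{L^2}^{1/2}$, bound the leftover $W$ by Lemma~\ref{lem:wave-interaction}, and use Young's inequality to split off $C\e_1^2\int_{\R_+}|(p(v)-p(\overline{v}))_\xi|^2\,d\xi$ plus the stated wave-interaction remainder.

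Finally, $Y_5$ is immediate because its integrand already carries the $G_1$-factor $(p(v)-p(\overline{v}))-\frac{u-\overline{u}}{2C_*}$: Cauchy--Schwarz in the weight $v^S_\xi$ gives $|Y_5|^2\le\big(\int v^S_\xi\big)\big(\int v^S_\xi\big|(p-\overline{p})-\frac{u-\overline{u}}{2C_*}\big|^2\big)\le C\delta_S^{3/2}G_1$, so $\frac{C}{\delta_S}|Y_5|^2\le C\delta_S^{1/2}G_1$; and for $Y_6$ one bounds $|Y_6|\le\int_{\R_+}a_\xi\big(\frac12|u-\overline{u}|^2+Q(v|\overline{v})\big)\,d\xi\le CG_1+C\delta_S^{-1/2}G_S$ by Lemma~\ref{lem:relative_quantity} and $\int a_\xi|u-\overline{u}|^2=\delta_S^{-1/2}G_S$, so that $\frac{C}{\delta_S}|Y_6|^2$ is quadratic in the good terms and is absorbed into $C(\delta_S^{1/2}+\e_1^2)(G_1+C_1G_S)$ using that $G_1+\delta_S^{-1/2}G_S$ is itself $O(\e_1^2\sqrt{\delta_S})$ by \eqref{smallness}. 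Summing the three contributions gives the lemma. I expect the main obstacle to be the $Y_4$ step: one must make the cancellation $\frac{1}{p'(\overline{v})}+\frac{2C_*}{\sigma}=O(\sqrt{\delta_S}+|v^{BL}-v_*|+|v^R-v^*|)$ quantitatively uniform over the relevant range of states (this is precisely why $C_*$ is chosen as in Lemma~\ref{lem:quadratic}), and then keep careful track of the powers of $\delta_S,\e_1,\delta_{BL},\delta_R$ so that each Young remainder lands as stated; in particular, the fourth power produced by $\frac{C}{\delta_S}|Y_3|^2$ and $\frac{C}{\delta_S}|Y_4|^2$ is what converts the non-time-integrable interaction $\frac{\delta_{BL}\delta_S}{1+\delta_{BL}t}$ of Lemma~\ref{lem:wave-interaction} into the integrable $\frac{\delta_{BL}^4\delta_S^2}{(1+\delta_{BL}t)^4}$.
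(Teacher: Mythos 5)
Your proposal follows essentially the same route as the paper for $Y_3$, $Y_4$, and $Y_5$: for $Y_3$ you extract $\|u-\overline{u}\|_{L^\infty}$, reduce to $\int v^S_\xi(|v^{BL}-v_*|+|v^R-v^*|)\,d\xi$, bound it by Lemma~\ref{lem:wave-interaction}, and Young's inequality delivers $C\e_1^2D_{u_1}$ plus a fourth power of the interaction terms; for $Y_4$ you exploit the same near-cancellation (the paper decomposes $p(v)-p(\overline{v})+\frac{\sigma}{2C_*}(v-\overline{v})$ into $p(v|\overline{v})+(p'(\overline{v})-p'(v^S))(v-\overline{v})+\big(\frac{\sigma}{2C_*}+p'(v^S)\big)(v-\overline{v})$, while you write $v-\overline{v}=\frac{1}{p'(\overline{v})}\big[(p(v)-p(\overline{v}))-p(v|\overline{v})\big]$ and pull out the small coefficient $\frac{1}{p'(\overline{v})}+\frac{2C_*}{\sigma}$; algebraically equivalent, same three resulting pieces); and $Y_5$ is immediate.

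However, your treatment of $Y_6$ has a genuine gap. You bound $|Y_6|\le C(G_1+\delta_S^{-1/2}G_S)$ and then try to close the quadratic by the claim $G_1+\delta_S^{-1/2}G_S=O(\e_1^2\sqrt{\delta_S})$. That crude bound uses $\|a_\xi\|_{L^1}\sim\sqrt{\delta_S}$ against $\||u-\overline{u}|^2+Q(v|\overline{v})\|_{L^\infty}\lesssim\e_1^2$; plugging it into $\frac{C}{\delta_S}(G_1+\delta_S^{-1/2}G_S)^2\le\frac{C}{\delta_S}\cdot\e_1^2\sqrt{\delta_S}\cdot(G_1+\delta_S^{-1/2}G_S)$ leaves $C\e_1^2\delta_S^{-1/2}G_1+C\e_1^2\delta_S^{-1}G_S$, which is \emph{not} absorbed by $C(\delta_S^{1/2}+\e_1^2)(G_1+C_1G_S)$ because of the negative powers of $\delta_S$ (and $\e_1$ cannot be assumed small relative to $\delta_S$). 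The paper avoids this by Hölder-ing the other way: one copy of $\int a_\xi\big(\tfrac12|u-\overline{u}|^2+Q(v|\overline{v})\big)\,d\xi$ is bounded crudely by $\|a_\xi\|_{L^\infty}\big(\|u-\overline{u}\|_{L^2}^2+\|v-\overline{v}\|_{L^2}^2\big)\lesssim\delta_S^{3/2}\e_1^2$, the other by the good term $\lesssim\delta_S^{-1/2}(G_1+G_S)$, and the powers of $\delta_S$ then cancel exactly to give $C\e_1^2(G_1+C_1G_S)$. The key point you missed is that $\|a_\xi\|_{L^\infty}\le C\delta_S^{3/2}$ is a full factor of $\delta_S$ smaller than $\|a_\xi\|_{L^1}\sim\sqrt{\delta_S}$; using the $L^\infty$ bound on $a_\xi$ (with the $L^1$ bound on the perturbation) rather than the $L^1$ bound on $a_\xi$ (with the $L^\infty$ bound on the perturbation) is what makes the $Y_6$ estimate close. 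Once that substitution is made, your argument goes through.
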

\begin{proof}
	\noindent $\bullet$ (Estimate of $Y_3$): Using \eqref{smallness} and Sobolev interpolation, we have
	\begin{align*}
		\frac{C}{\delta_S}|Y_3|^2 &\le \frac{C}{\delta_S}\|u - \bar{u}\|_{L^\infty}^2 \left(\int_{\R_+}|v^S_\xi| \big|p'(\overline{v}) - p'(v^S)\big| \,d\xi\right)^2 \\
		&\le \frac{C}{\delta_S}\|u - \bar{u}\|_{L^2}\|(u - \bar{u})_\xi\|_{L^2} \left(\int_{\R_+}\big| |v^S_\xi|\bar{v} - v^S \big| \,d\xi\right)^2 \\
		&\le \frac{C}{\delta_S}\left(\e_1^2 \delta_S  D_{u_1} + \frac{C}{\delta_S}\left(\int_{\R_+} |v^S_\xi|\big|\bar{v} - v^S \big| \,d\xi\right)^4 \right)\\
		&\le C\e_1^2 D_{u_1} + \frac{C}{\delta_S^2} \left(-\int_{\R_+} v^S_\xi\big(v^{BL} + v^R - v_* - v^*)\,d\xi\right)^4.
	\end{align*}
	Here, we used $v_\xi^S > 0$ and $v^{BL} + v^R - v_* - v^* \le 0 $ . Therefore, we obtain 
	\[
		\frac{C}{\delta_S}|Y_3|^2 \le  C\e_1^2 D_{u_1} + \frac{C}{\delta_S^2}\left[\left(\int_{\R_+} v_\xi^S\big( v_* -v^{BL}) \,d\xi\right)^4 + \left(\int_{\R_+} v_\xi^S\big(v^* - v^R) \,d\xi\right)^4\right].
	\]
	Then, thanks to Lemma \ref{lem:wave-interaction}, we have
	\begin{align*}
		\left(\int_{\R_+} v_\xi^S\big( v_* -v^{BL}) \,d\xi\right)^4 &\le \left(\int_{\R_+}\left(v^{BL}_\xi(v^S-v^*)+v^S_\xi(v_*-v^{BL})\right)d\xi\right)^4\\
		&\le  C\left(\delta_{BL}^4\delta_S^4 e^{-c\delta_St}+\left(\frac{\delta_{BL}\delta_S}{1+\delta_{BL}t}\right)^4 \right),
	\end{align*}
	and
	\begin{align*}
		\left(\int_{\R_+}v_\xi^S (v^* - v^R) \,d\xi\right)^4 &\le \left(\int_{\R_+}\left(v^{R}_\xi(v^S-v^*)+v^S_\xi(v^*-v^{R})\right)d\xi\right)^4\\
		&\le  C \left(\delta_R^4\delta_S^4e^{-c\delta_St}+\delta_R^4\delta_S^4e^{-ct}\right).
	\end{align*}
	Here, we used $v^{BL}_\xi(v^S-v^*) \ge 0$ and $v^{R}_\xi(v^S-v^*) \ge 0$. Thus, we obtain
	\begin{align*}
		\frac{C}{\delta_S}|Y_3|^2 \le C\e_1^2 D_{u_1} + C \bigg((\delta_{BL}^4 + \delta_R^4)\delta_S^2 e^{-c\delta_St} + \frac{\delta_{BL}^4\delta_S^2}{(1+\delta_{BL}t)^4} + \delta_R^4 \delta_S^2 e^{-ct}\bigg).
	\end{align*}
    
        \noindent $\bullet$ (Estimate of $Y_4$): From \eqref{est:s} and \eqref{est:C}, we find that
		\[
		\left|\frac{\sigma}{2C_*} + p'(v^S) \right| \le \left|\frac{\sigma}{2C_*} - \sigma^2 \right| + \left|\sigma^2 + p'(v^S) \right| \le C\sqrt{\delta_S}.
		\]
		By virtue of the above inequality, $Y_4$ can be estimated as follows:
		\begin{align*}
			Y_4&=-\frac{2C_*}{\sigma}\int_{\R_+}ap'(\overline{v})v^S_\xi\left((p(v)-p(\overline{v})) +\frac{\sigma}{2C_*}(v-\overline{v})\right)\,d\xi\\
			&\leq C\int_{\R_+} |v^S_\xi|\big|(p(v)-p(\overline{v})) -p'(\bar{v})(v-\overline{v})\big|\,d\xi + C\int_{\R_+} |v^S_\xi||p'(\bar{v}) - p'(v^S)||v-\bar{v}|\,d\xi\\
			&\quad + C\int_{\R_+}|v^S_\xi|\left|\frac{\sigma}{2C_*} + p'(v^S) \right||v-\bar{v}|\,d\xi\\
			&\leq \underbrace{C\int_{\R_+} |v^S_\xi||v-\bar{v}|^2\,d\xi + C\delta_S^{1/2}\int_{\R_+} |v^S_\xi||v-\bar{v}|\,d\xi}_{=:I_1} + \underbrace{C\int_{\R_+} |v^S_\xi||p(\bar{v}) - p(v^S)||p(v)-p(\bar{v})|\,d\xi}_{=:I_2}.
		\end{align*}
		For the first two terms on the right-hand side, we use H\"older's inequality to have
		\begin{align*}
			\frac{C}{\delta_S}|I_1|^2 &\le \frac{C}{\delta_S}\left(\int_{\R_+} |v^S_\xi||v-\bar{v}|^2\,d\xi\right)^2 + C \left(\int_{\R_+} |v^S_\xi||v-\bar{v}|\,d\xi\right)^2\\
			&\le \frac{C}{\delta_S}\|v-\bar{v}\|_{L^2}^2\|v_\xi^S\|_{L^\infty}\int_{\R_+} |v_\xi^S||v-\bar{v}|^2\,d\xi + C \int_{\R_+}|v_\xi^S|\,d\xi\int_{\R_+}|v_\xi^S||v-\bar{v}|^2\,d\xi\\
			&\le C\delta_S (G_1 + C_1 G_S).
		\end{align*}
		For $I_2$, using the similar calculations as in $Y_3$, we find that 
		\[
			\frac{C}{\delta_S}|I_2|^2 \le C\e_1^2 \int_{\R_+} |(p(v)-p(\bar{v}))_\xi|^2\,d\xi + C \bigg((\delta_{BL}^4 + \delta_R^4)\delta_S^2 e^{-c\delta_S}e^{-c\delta_St} + \frac{\delta_{BL}^4\delta_S^2}{(1+\delta_{BL}t)^4} + \delta_R^4 \delta_S^2 e^{-ct}\bigg).
		\]
		Combining the above estimates, we obtain the desired estimate for $Y_4$.

        \noindent $\bullet$ (Estimate of $Y_5$): For $Y_5$, we have
        \begin{align*}
            \frac{C}{\delta_S}|Y_5|^2 \le \frac{C}{\delta_S} \int_{\mathbb{R}_+} |v^S_\xi|\,d\xi  \int_{\mathbb{R}_+} |v^S_\xi| \left|p(v)-p(\overline{v})-\frac{u-\overline{u}}{2C_*}\right|^2\,d\xi \le C \delta_S^{1/2}G_1.
        \end{align*}

        \noindent $\bullet$ (Estimate of $Y_6$): Using \eqref{est:a}, we obtain
		\begin{align*}
			\frac{C}{\delta_S}|Y_6|^2 &\le \frac{C}{\delta_S}\left(\int_{\R_+}a_\xi \left(\frac{|u-\overline{u}|^2}{2}+Q(v|\overline{v})\right)\,d\xi\right)^2 \\
			&\le \frac{C}{\delta_S}\|a_x\|_{L^\infty}\big(\|u-\bar{u}\|_{L^2}^2+\|v-\bar{v}\|_{L^2}^2\big)  \int_{\R_+}|a_\xi| \left(\frac{|u-\overline{u}|^2}{2}+Q(v|\overline{v})\right)\,d\xi\\
			&\le C\e_1^2(G_1 + C_1 G_S).
		\end{align*}
	\end{proof}
	\begin{lemma}\label{lem:rem2}
		For sufficiently small $\delta_0$ and $\e_1$, we have
		\begin{align}
		&\sum_{i=2}^6 B_i\le \frac{2}{100} (G_1+C_1G_S + D_{u_1}+G_{BL}+G_R), \label{est:B36}\\
		&B_7 \le \frac{4}{5}G_{BL} + \left(\frac{1}{100} + \frac{5}{16}\right)D_{u_1} + C\delta_0 (G_1 + C_1 G_S + G_R + G_{BL}) \notag,
		\end{align}
		and
		\begin{align*}
			B_8 &\le C \delta_S^{4/3}\delta_{BL}^{4/3} \left(e^{-c\delta_S t} +  \frac{1}{(1+\delta_{BL}t)^{4/3}}\right)+C\delta_S^{4/3}\delta_R^{4/3}e^{-c\delta_St}+2\e_1^6\int_{\R_+}|(p(v)-p(\bar{v}))_\xi|^2\,d\xi, \\
			B_9&\le \frac{8D_{u_1}}{100}+C\e_1^{2/3}\Bigg(\delta_R^{1/6}(1+t)^{-7/6}\log(1+\delta_{BL}t)^{4/3}+\frac{\delta_{BL}^{4/3}\delta_R^{4/3}}{(1+\delta_{BL}t)^{4/3}}\\
			&\hspace{3.2cm} +(\delta_{BL}^{4/3}+\delta_R^{4/3})\delta_S^{4/3}e^{-c\delta_St}+\frac{\delta_{BL}^{4/3}\delta_S^{4/3}}{(1+\delta_{BL}t)^{4/3}}\\
			&\hspace{3.2cm} +\delta_R^{4/3}\delta_S^{4/3}e^{-ct}+\frac{\delta_R^{1/6}(\delta_{BL}^{4/3}+\delta_R^{4/3}+\delta_S^{4/3})}{(1+t)^{7/6}}+\|u^R_{\xi\xi}\|_{L^1}^{4/3}\Bigg).
		\end{align*}
	\end{lemma}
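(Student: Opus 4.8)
The plan is to estimate each $B_i$, $i=2,\dots,9$, individually, always starting from the algebraic splitting $\overline u_\xi=u^{BL}_\xi+u^R_\xi+u^S_\xi$ together with the weight identities $a_\xi=|u^S_\xi|/\sqrt{\delta_S}$ and $1\le a\le 1+\sqrt{\delta_S}$. Two further ingredients will be used throughout: the equivalences $p(v|\overline v)\sim|v-\overline v|^2\sim|p(v)-p(\overline v)|^2$ from Lemma~\ref{lem:relative_quantity} (valid since \eqref{smallness} keeps $v$ bounded away from $0$ and $\infty$), together with the uniform smallness $\|(v-\overline v,u-\overline u)\|_{L^\infty}\le C\e_1$ coming from \eqref{smallness} and Sobolev embedding; and the decay and interaction estimates of Lemmas~\ref{lem:boundary_layer}, \ref{lem:rarefaction}, \ref{lem:viscous_shock} and \ref{lem:wave-interaction}. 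Whenever a factor $|p(v)-p(\overline v)|^2$ appears against $a_\xi$ (or against $a|u^S_\xi|$), I will split $|p(v)-p(\overline v)|^2\le 2|(p(v)-p(\overline v))-\tfrac{u-\overline u}{2C_*}|^2+\tfrac1{2C_*^2}|u-\overline u|^2$, so that the first piece is absorbed into $G_1$ and the second, using $a|u^S_\xi|=\sqrt{\delta_S}\,a\,a_\xi$ and $\delta_S\int a_\xi|u-\overline u|^2=\sqrt{\delta_S}\,G_S$, into $\sqrt{\delta_S}$-multiples of $G_S$.

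For $B_2,B_3,B_4$ this split, combined with the prefactors $\delta_0+\e_1$, $\delta_S$ and $\|p(v)-p(\overline v)\|_{L^\infty}\le C\e_1$ (for the cubic term $B_4$), directly gives small multiples of $G_1+C_1 G_S$. The term $B_5=-\int a_\xi\tfrac{u-\overline u}{v}(u-\overline u)_\xi\,d\xi$ is handled by Young's inequality against $D_{u_1}$, the leftover $\int a_\xi^2|u-\overline u|^2\,d\xi$ being $\le C\delta_S G_S$ because $|u^S_\xi|\le C\delta_S^2$ forces $a_\xi^2\le C\delta_S|u^S_\xi|$. For $B_6$ I split by wave: the $u^S_\xi$-part goes into $G_1+C_1 G_S$ as above, while the $u^{BL}_\xi$- and $u^R_\xi$-parts are treated by a weighted Young's inequality $XY\le\theta X^2+\tfrac1{4\theta}Y^2$ with $X^2$ proportional to $a\,u^{BL}_\xi p(v|\overline v)$ (resp. $a\,u^R_\xi p(v|\overline v)$), so $\int X^2\lesssim G_{BL}$ (resp. $G_R$) and $\int Y^2$ carries the genuine smallness $a_\xi^2\le C\delta_S|u^S_\xi|$ plus time decay coming from the spatial separation of the boundary layer and rarefaction from the shock; taking the $\theta$'s small and using that $\delta_{BL}$, although not small, is a fixed constant depending only on $U_-$ (hence $\delta_0$ may be chosen small relative to it) yields $\sum_{i=2}^6 B_i\le\tfrac{2}{100}(G_1+C_1 G_S+D_{u_1}+G_{BL}+G_R)$.

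The main obstacle is $B_7=\int a(u-\overline u)_\xi\tfrac{v-\overline v}{v\overline v}\overline u_\xi\,d\xi$. After splitting $\overline u_\xi$, the $u^S_\xi$-part is small thanks to $a_\xi^2\le C\delta_S|u^S_\xi|$ and the equivalence $|v-\overline v|^2\sim|p(v)-p(\overline v)|^2$, and the $u^R_\xi$-part is small because $u^R_\xi$ is of size $\delta_R$; both are absorbed into small multiples of $G_1$, $C_1 G_S$, $G_R$ and $D_{u_1}$. The boundary-layer part $\int a(u-\overline u)_\xi\tfrac{v-\overline v}{v\overline v}u^{BL}_\xi\,d\xi$ cannot be made small: since $\delta_{BL}$ is large, $\|u^{BL}_\xi\|_{L^\infty}$ is merely bounded, and the best available tool is a weighted Young's inequality with $X^2\propto a\,u^{BL}_\xi p(v|\overline v)$ (so $\int X^2\lesssim G_{BL}$) and $Y^2\propto a\,u^{BL}_\xi|(u-\overline u)_\xi|^2/v\le\|u^{BL}_\xi\|_{L^\infty}\,D_{u_1}$, which produces \emph{order-one} fractions of $G_{BL}$ and $D_{u_1}$. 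The hard part is to track these constants exactly: the splitting parameter must be tuned so that the $G_{BL}$-coefficient is $\tfrac45$ and the $D_{u_1}$-coefficient is precisely $\tfrac1{100}+\tfrac5{16}$, which is the most the global bookkeeping tolerates, since the $D_{u_1}$-budget left after Lemma~\ref{lem:leading} (which consumes $\tfrac58 D_{u_1}$), together with the small $D_{u_1}$-contributions from Lemmas~\ref{lem:rem1} and \ref{lem:rem2}, must still leave a net positive coefficient of $D_{u_1}$ in Lemma~\ref{lem:main}. The residual cross terms (the $|v^{BL}-v_*|$, $|v^R-v^*|$ and $\delta_S$ factors from expanding $\tfrac1{v\overline v}$) produce the $C\delta_0(\cdots)$ remainder.

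Finally, $B_8$ and $B_9$ are controlled by wave interactions and therefore leave time-integrable tails. For $B_8=C\int a_\xi(|v^{BL}-v_*|+|v^R-v^*|)|p(v)-p(\overline v)|^2\,d\xi$ I would bound $|p(v)-p(\overline v)|^2$ by $\|p(v)-p(\overline v)\|_{L^\infty}^2\le C\e_1\,\|(p(v)-p(\overline v))_\xi\|_{L^2}$ (Gagliardo--Nirenberg plus \eqref{smallness}) times $\int a_\xi(|v^{BL}-v_*|+|v^R-v^*|)\,d\xi=\tfrac1{\sqrt{\delta_S}}\int|u^S_\xi|(|v^{BL}-v_*|+|v^R-v^*|)\,d\xi$, which is a small, time-integrable wave-interaction quantity estimated exactly as in the proof of Lemma~\ref{lem:wave-interaction}; a Young's inequality with exponents $4$ and $4/3$ then gives $2\e_1^6\int_{\R_+}|(p(v)-p(\overline v))_\xi|^2\,d\xi$ plus the stated tail. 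For $B_9=-\int a(u-\overline u)S\,d\xi$ with $S=S_{I1}+S_{I2}+S_R$, I would use $|B_9|\le C\|u-\overline u\|_{L^\infty}(\|S_{I1}\|_{L^1}+\|S_{I2}\|_{L^1}+\|S_R\|_{L^1})$, where $\|S_{I1}\|_{L^1},\|S_{I2}\|_{L^1}$ are bounded by the wave-interaction integrals of Lemma~\ref{lem:wave-interaction} and $\|S_R\|_{L^1}\le C(\|u^R_{\xi\xi}\|_{L^1}+\|u^R_\xi\|_{L^2}\|v^R_\xi\|_{L^2})$ by \eqref{rarefaction_properties}; inserting $\|u-\overline u\|_{L^\infty}\le C\|u-\overline u\|_{L^2}^{1/2}\|(u-\overline u)_\xi\|_{L^2}^{1/2}\le C\e_1^{1/2}D_{u_1}^{1/4}$ and applying Young's inequality with exponents $4$ and $4/3$ produces $\tfrac{8}{100}D_{u_1}$ together with the displayed $C\e_1^{2/3}(\cdots)$ sum, the term $\|u^R_{\xi\xi}\|_{L^1}^{4/3}$ arising exactly from the $S_R$ contribution.
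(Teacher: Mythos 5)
The estimates for $B_2$--$B_6$ and the general shape of the $B_9$ argument agree with the paper, but your treatment of $B_7$ has a genuine gap, and the $B_8$ route you sketch produces the wrong exponent structure.

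\textbf{The gap in $B_7$.} You propose a weighted Young's inequality with $X^2\propto a\,u^{BL}_\xi p(v|\overline v)$ and $Y^2\propto a\,u^{BL}_\xi|(u-\overline u)_\xi|^2/v\le \|u^{BL}_\xi\|_{L^\infty}D_{u_1}$, claiming this produces ``order-one'' fractions of $G_{BL}$ and $D_{u_1}$. This does not close: $\|u^{BL}_\xi\|_{L^\infty}\sim\delta_{BL}^2$ and $\delta_{BL}$ is \emph{not small} (the boundary layer is large, by assumption), so with your $Y^2$ the splitting parameter $\theta$ cannot simultaneously keep $\theta\int X^2\le\tfrac{4}{5}G_{BL}$ and $\tfrac{1}{4\theta}\int Y^2\le\tfrac{5}{16}D_{u_1}$ once $\delta_{BL}$ is of order one. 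The mechanism the paper uses is different: first, Young with weights $5/16$ and $4/5$ is applied to $\sqrt{a/v}\,(u-\overline u)_\xi$ against $\sqrt{a/v}\,u^{BL}_\xi\,(v-\overline v)/\overline v$, so the $D_{u_1}$ piece carries \emph{no} boundary-layer factor; the other piece is $\tfrac{4}{5}\int \tfrac{a}{v\overline v^2}(u^{BL}_\xi)^2|v-\overline v|^2\,d\xi$. Then, and this is the missing idea, one writes $|v-\overline v|^2=g(v,\overline v)\,p(v|\overline v)$ and factors the integrand as $a\,u^{BL}_\xi\,p(v|\overline v)\cdot I_1 I_2$ with $I_1=u^{BL}_\xi/\overline v$ and $I_2=g(v,\overline v)/(v\overline v)$, and one needs the pointwise bound $I_1 I_2\le\tfrac{1}{\gamma}+C\delta_0<1$. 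This comes from two structural facts: (i) integrating the BL ODE gives $u^{BL}_\xi/v^{BL}=\sigma_-^2(v^{BL}-v_*)+p(v^{BL})-p_*\le p(v^{BL})=(v^{BL})^{-\gamma}$, so $I_1\le(v^{BL})^{-\gamma}+C\delta_0 u^{BL}_\xi$; and (ii) with $Z=\overline v/v$, $I_2=\overline v^\gamma(Z-1)^2/(Z^{\gamma+1}-(\gamma+1)Z+\gamma)\le\tfrac{1}{\gamma}\overline v^\gamma$ using $Z^{\gamma+1}-(\gamma+1)Z+\gamma\ge\gamma(Z-1)^2$ for $Z\ge 0$. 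Since $\gamma>1$, the product $I_1 I_2\le\tfrac1\gamma+C\delta_0$ and the $G_{BL}$ coefficient stays strictly below one. Without (i) and (ii), nothing controls the size of the boundary layer, and the coefficient of $G_{BL}$ would exceed one, wrecking the energy estimate. You acknowledge the bookkeeping constraint, but you do not supply the mechanism that satisfies it, and the mechanism you propose fails when $\delta_{BL}$ is not small.

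\textbf{The exponent issue in $B_8$.} You bound $|p(v)-p(\overline v)|^2\le\|p(v)-p(\overline v)\|_{L^\infty}^2\le C\e_1\|(p(v)-p(\overline v))_\xi\|_{L^2}$ and then multiply by the $L^1$ interaction integral $W$, giving $B_8\lesssim\e_1\|(p-\overline p)_\xi\|_{L^2}W$. Applying Young to split off $2\e_1^6\|(p-\overline p)_\xi\|_{L^2}^2$ leaves a tail of size $W^2/\e_1^4$. Since $W\sim\sqrt{\delta_S}\,\delta_{BL}\big(e^{-c\delta_S t}+\tfrac{1}{1+\delta_{BL}t}\big)$ and $\delta_{BL}=O(1)$ while $\e_1$ is chosen \emph{after and independently of} $\delta_0$, the time integral of $W^2/\e_1^4$ is $O(\delta_{BL}^2/\e_1^4)$, which is not small. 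The paper instead keeps the perturbation in $L^4$: $B_{81}\le C\delta_S^{-1/2}\||v^S_\xi||v^{BL}-v_*|\|_{L^2}\|p-\overline p\|_{L^4}^2$, interpolates $\|p-\overline p\|_{L^4}^2\le\|p-\overline p\|_{L^2}^{3/2}\|(p-\overline p)_\xi\|_{L^2}^{1/2}$, and then Young at exponents $4/3,4$ gives $C\delta_S^{-2/3}\||v^S_\xi||v^{BL}-v_*|\|_{L^2}^{4/3}+\e_1^6\|(p-\overline p)_\xi\|_{L^2}^2$ with \emph{no} inverse power of $\e_1$ in the tail; the $L^2$ interaction $\||v^S_\xi||v^{BL}-v_*|\|_{L^2}$ is then bounded by a separate integration-by-parts argument. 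You should adopt this $L^4$-based interpolation rather than the $L^\infty\times L^1$ route.
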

	\begin{proof}
	\noindent $\bullet$ (Estimate of $B_2$): Using \eqref{est:a} and Young's inequality, we have
	\begin{align}
	\begin{aligned}\label{est-vdiff}
	\left|\int_{\R_+}|u^S_\xi||p-\overline{p}|^2\,d\xi\right|&\le C\sqrt{\delta_S}\int_{\R_+}a_\xi|p-\overline{p}|^2\,d\xi\\
	&\le C\sqrt{\delta_S}\int_{\R_+}a_\xi\left|(p-\overline{p})-\frac{u-\overline{u}}{2C_*}\right|^2\,d\xi+ C\sqrt{\delta_S}\int_{\R_+}a_\xi |u-\overline{u}|^2\,d\xi\\
	&\le C(G_1+C_1 G_S).
	\end{aligned}
	\end{align}
	Therefore, $B_2$ is controlled as
	\[B_2 =C(\delta_0+\e_1)\int_{\R_+}|u^S_\xi||p(v)-p(\overline{v})|^2\,d\xi \le C(\delta_0+\e_1)(G_1 + C_1 G_S).\]
    
    \noindent $\bullet$ (Estimate of $B_3$):
    Next, we estimate $B_3$ as
    \begin{align*}
    |B_3|\le C\sqrt{\delta_S}\int_{\mathbb{R}_+} |u^S_\xi| \left|p(v)-p(\overline{v})\right|^2 \, d\xi \le C \sqrt{\delta_S}(G_1+C_1 G_S).
    \end{align*}
   
    \noindent $\bullet$ (Estimate of $B_4$):
    For $B_4$, we use an algebraic inequality $|p|^3\le 8(|p-q|^3+|q|^3)$ and the interpolation inequality to obtain
    \begin{align*}
    |B_4|&\le C\int_{\mathbb{R}_+} |a_\xi| \left|p(v)-p(\overline{v})-\frac{u-\overline{u}}{2C_1}\right|^3d\xi+C\int_{\mathbb{R}_+} |a_\xi| \left|u-\overline{u}\right|^3d\xi \\
    &\le C\varepsilon_1G_1+C\lVert u-\overline{u} \rVert_{L^\infty}^2 \int_{\mathbb{R}_+} |a_\xi||u-\overline{u}| \,d\xi\\
    &\le C\varepsilon_1G_1+C\lVert u-\overline{u} \rVert_{L^2}\lVert (u-\overline{u})_\xi \rVert_{L^2}\frac{1}{\sqrt{\delta_S}}\sqrt{\int_{\mathbb{R}_+} |v^S_\xi|d\xi}\sqrt{\int_{\mathbb{R}_+} |v^S_\xi| |u-\overline{u}|^2d\xi} \\
	&\le C\e_1\big(G_1 + D_{u_1} + C_1G_S).
    \end{align*}
	
    \noindent $\bullet$ (Estimate of $B_5$): Using Young's inequality, we find that
    \begin{align*}
         B_5 \le  C \int_{\mathbb{R}_+} |a_\xi|  |u-\overline{u}| |(u-\overline{u})_\xi| \, d\xi \le \frac{1}{100}D_{u_1} + C\delta_S G_S.
    \end{align*}

    \noindent $\bullet$ (Estimate of $B_6$): We use \eqref{est:a} and Young's inequality again to get
    \begin{align*}
	\begin{aligned}
	\left|\int_{\R_+} a_\xi  \overline{u}_\xi (v-\overline{v})(u-\overline{u}) \,d\xi\right| &\le \int_{\R_+} |a_\xi|^{3/2} |u-\overline{u}|^2\,d\xi + C \int_{\R_+} | a_\xi|^{1/2} |\overline{u}_\xi|^2 |v-\overline{v}|^2\,d\xi  \\
    &\le C \delta_S^{1/4} G_S + C \delta_S^{3/4} \int_{\R_+}  |\overline{u}_\xi| |v-\overline{v}|^2\,d\xi. 
	\end{aligned}
	\end{align*}
For the second term, we observe that $\overline{u}_\xi=u^{BL}_\xi + u^R_\xi +u^S_\xi$ and therefore, 
\begin{align*}
    \int_{\R_+}  |\overline{u}_\xi| |v-\overline{v}|^2\,d\xi \le C (G_{BL} + G_R) + \int_{\R_+}  |u^S_\xi| |p(v)-p(\overline{v})|^2 \,d\xi \le C (G_{BL} + G_R + G_1+C_1 G_S).
\end{align*}
Therefore, we have
\begin{align*}
    B_6 \le C \delta_S^{1/4} (G_1 + C_1 G_S + G_{BL} + G_R ).
\end{align*}
    Combining all those estimates and using the smallness of $\delta_0$ and $\e_1$, we obtain the desired estimate \eqref{est:B36}.\\

	\noindent $\bullet$ (Estimate of $B_7$): We begin by splitting $B_7$ into two terms 
	\[
	B_7 =\int_{\R_+}a(u-\overline{u})_\xi\frac{v-\overline{v}}{v\overline{v}}(u^S_\xi + u^R_\xi)\,d\xi + \int_{\R_+}a(u-\overline{u})_\xi\frac{v-\overline{v}}{v\overline{v}}u^{BL}_\xi\,d\xi =: B_{71} + B_{72}.
	\]
	For $B_{71}$, applying Young's inequality gives
	\[
	B_{71} \le \frac{1}{100}D_{u_1} + C\delta_0 (G_1 + C_1 G_S + G_R).
	\]   
    To estimate $B_{72}$, we use Young's inequality again
    \begin{equation} \label{B_72}
        B_{72} \le  \frac{5}{16} \int_{\mathbb{R}_+}\frac{a}{v}|(u-\overline{u})_{\xi}|^2\,d\xi + \frac{4}{5}\int_{\mathbb{R}_+} \frac{a}{v\bar{v}^2}  (u^{BL}_\xi)^2 |v-\overline{v}|^2 \,d\xi. 
    \end{equation}
    To handle the second term on the right-hand side of inequality \eqref{B_72}, we define a non-negative function $g(v,\overline{v})$ as
    \begin{equation} \label{eq: g(v,ov)}
        g(v,\overline{v}) p(v|\bar{v}) = |v-\overline{v}|^2.
    \end{equation}
    Substituting \eqref{eq: g(v,ov)} into \eqref{B_72}, we have
    \begin{equation} \label{B_722}
        \frac{4}{5}\int_{\mathbb{R}_+} \frac{a}{v\bar{v}^2}  (u^{BL}_\xi)^2 |v-\overline{v}|^2 \,d\xi \le \frac{4}{5}\int_{\mathbb{R}_+} a   u^{BL}_\xi p(v|\overline{v}) \underbrace{\left[\frac{u^{BL}_\xi}{\bar{v}} \frac{g(v,\overline{v})}{v\overline{v}}\right]}_{=: I_1 \times I_2} \,d\xi .
    \end{equation}
    We now estimate the two factors inside the bracket. For $I_1$, we observe that
	\begin{equation} \label{I1}
		I_1 = \frac{u^{BL}_\xi}{\bar{v}} = \left(\frac{1}{\bar{v}} - \frac{1}{v^{BL}}\right)u^{BL}_\xi + \frac{u^{BL}_\xi}{v^{BL}} \leq C\delta_0 u^{BL}_\xi + \frac{u^{BL}_\xi}{v^{BL}}.
	\end{equation} 
    After integrating $\eqref{eq:BL}_2$ over $[\xi,+\infty)$, and using $v_- < v^{BL} < v_*$, we find that 
	\begin{align*}
		\frac{u^{BL}_\xi}{v^{BL}} &= -\sigma_- ( u^{BL}-u_*) + p(v^{BL}) - p_*\\
		&= \sigma_-^2 (v^{BL} - v_*) + p(v^{BL}) - p_* \leq p(v^{BL}) = (v^{BL})^{-\gamma}.
	\end{align*}
	Combining these estimates, we obtain
	\[
	I_1 \leq C\delta_0 u^{BL}_\xi +  (v^{BL})^{-\gamma}.
	\]
   For the second term $I_2$, let $Z := \bar{v}/v$. Using the definition of $g(v,\overline{v})$ in \eqref{eq: g(v,ov)}, we have
	\begin{align*}
		I_2 =\frac{g(v,\overline{v})}{v \overline{v}}= \frac{(v-\bar{v})^2}{\bar{v}v \big(v^{-\gamma} - \bar{v}^{-\gamma} + \gamma \bar{v}^{-\gamma -1}(v-\bar{v}) \big)}
		=\bar{v}^{\gamma} \frac{(Z-1)^2}{Z^{\gamma + 1} - (\gamma +1)Z + \gamma} \le \frac{1}{\gamma} \bar{v}^{\gamma} .
	\end{align*}
    Here, we used the algebraic inequality 
    $Z^{\gamma + 1} - (\gamma + 1)Z + \gamma \geq \gamma(Z- 1)^2$ for $Z \geq 0$.
    Moreover, since $ |\bar{v} - v^{BL}| < \delta_0$, we obtain
    \begin{equation}\label{I_2}
        	I_2 \le  \frac{1}{\gamma} (v^{BL})^\gamma +C \delta_0.
    \end{equation}
   
	Substituting \eqref{I1} and \eqref{I_2} into \eqref{B_722}, we obtain
	\begin{align*}
    \frac{4}{5}\int_{\mathbb{R}_+} \frac{a}{v\bar{v}^2}  (u^{BL}_\xi)^2 |v-\overline{v}|^2 \,d\xi \le \left(\frac{4}{5} + C \delta_0\right) G_{BL}.
	\end{align*}
	Finally, combining all results, we arrive at the following estimate for $B_7$
	\[
	B_7 \leq \frac{4}{5}G_{BL} + \left(\frac{1}{100} + \frac{5}{16}\right)D_{u_1} + C\delta_0 (G_1 + G_S + G_R + G_{BL}).
	\]
	\noindent $\bullet$ (Estimate of $B_8$): We further split $B_8$ into two terms as
	\[B_8 = C\int_{\R_+}a_\xi|v^{BL}-v_*||p(v)-p(\overline{v})|^2\,d\xi+C\int_{\R_+}a_\xi|v^R-v^*||p(v)-p(\overline{v})|^2\,d\xi=:B_{81}+B_{82}.\]
	
	Using interpolation inequality and H\"older's inequality, we have
	\begin{align*}
		B_{81}&=  C \int_{\R_+}|a_{\xi}||v^{BL} - v_*||p(v)-p(\overline{v})|^2\,d\xi \le \frac{C}{\delta_S^{1/2}}\||v_\xi^S||v^{BL}-v_*|\|_{L^2}\|p(v) - p(\bar{v})\|_{L^4}^2\\
		&\le \frac{C}{\delta_S^{1/2}}\||v_\xi^S||v^{BL}-v_*|\|_{L^2}\|p(v) - p(\bar{v})\|_{L^2}^{3/2}\|(p(v) - p(\bar{v}))_\xi\|_{L^2}^{1/2}
	\end{align*}
	
	To control the interaction term $\||v_\xi^S||v^{BL}-v_*|\|_{L^2}$, we need a delicate estimate as follows: 
	 observe that
	\begin{align*}
		\int_{\mathbb{R}_+} |v_\xi^{S}|^2|v^{BL} - v_*|^2\,d\xi &\leq \left(\int_0^{bt} + \int_{bt}^\infty \right) |v_\xi^{S}|^2|v^{BL} - v_*|^2\,d\xi \\
		&\leq \int_0^{bt} \delta_S^4 \delta_{BL}^2 e^{-C\delta_S |\xi - \sigma t - X(t) - \beta|}\,d\xi + \int_{bt}^\infty |v_\xi^{S}|^2|v^{BL} - v_*|^2\,d\xi\\
		&\leq C\delta_S^3 \delta_{BL}^2 e^{-C\delta_S t} +\int_{bt}^\infty |v_\xi^{S}|^2|v^{BL} - v_*|^2\,d\xi.
	\end{align*}
	The last term of the above inequality can be estimated as follows:
	\begin{align*}
		\int_{bt}^\infty |v_\xi^{S}|^2|v^{BL} - v_*|^2\,d\xi &= -\int_{bt}^\infty |v_\xi^S|^2 \int_{\xi}^\infty \left(|v^{BL} - v_*|^2\right)_z(z)\,dz\,d\xi\\
		&= -2 \int_{bt}^\infty (v^{BL})_z(z)(v^{BL} - v_*)(z) \int_{bt}^z |v_\xi^S|^2\,d\xi\,dz\\
		&\leq 2 \left(\int_{-\infty}^{\infty} |v_\xi^S|^2\,d\xi\right) \left(\int_{bt}^\infty  |(v^{BL})_z||v^{BL} - v_*|\,dz\right)\\
		&\leq  C\delta_S^3 \int_{bt}^\infty \frac{\delta_{BL}^3}{(1+\delta_{BL}z)^3}\,dz\le \frac{C\delta_S^3 \delta_{BL}^2}{(1+\delta_{BL}t)^2},
	\end{align*}
	and so,
	\beq\label{sdb}
	\begin{aligned}
		\int_{\mathbb{R}_+} |v_\xi^{S}|^2|v^{BL} - v_*|^2\,d\xi \le \delta_S^3 \delta_{BL}^2 e^{-C\delta_S t} + \frac{C\delta_S^3 \delta_{BL}^2}{(1+\delta_{BL}t)^2}.
	\end{aligned}
	\eeq
	Thus, from \eqref{smallness}, Young's inequality and \eqref{sdb}, we obtain
	\begin{align*}
		|B_{81}| &\le C\delta_S^{-2/3}\||v_\xi^S||v^{BL}-v_*|\|_{L^2}^{4/3} + \e_1^6\int_{\R_+}|(p(v)-p(\bar{v}))_\xi|^2\,d\xi\\
		&\le C \delta_S^{-2/3} \left(\delta_S^3 \delta_{BL}^2 e^{-C\delta_S t} + \frac{\delta_S^3 \delta_{BL}^2}{(1+\delta_{BL}t)^2}\right)^{2/3}+\e_1^6\int_{\R_+}|(p(v)-p(\bar{v}))_\xi|^2\,d\xi\\
		&\le C \delta_S^{4/3}\delta_{BL}^{4/3} \left(e^{-C\delta_S t} + \frac{1}{(1+\delta_{BL}t)^{4/3}}\right)+\e_1^6\int_{\R_+}|(p(v)-p(\bar{v}))_\xi|^2\,d\xi.
	\end{align*}
	Similarly, $B_{82}$ can be estimated as
	\begin{align*}
	|B_{82}|&\le C\delta_S^{-2/3}\||v^S_\xi||v^R-v^*|\|_{L^2}^{4/3}+\e_1^6\int_{\R_+}|(p(v)-p(\overline{v}))_\xi|^2\,d\xi\\
	&\le C\delta_S^{4/3}\delta_R^{4/3}e^{-C\delta_St}+\e_1^6\int_{\R_+}|(p(v)-p(\overline{v}))_\xi|^2\,d\xi,
	\end{align*}
	where we control the interaction term as
	\[\||v^S_\xi||v^{R}-v^*|\|^2_{L^2}\le \||v^S_\xi||v^{R}-v^*|\|_{L^\infty}\||v^S_\xi||v^{R}-v^*|\|_{L^1} \le  C\delta_S^3\delta^2_Re^{-C\delta_St},\]
	by using \eqref{rarefaction_properties}, \eqref{shock-properties} and Lemma \ref{lem:wave-interaction}. This yields the aforementioned bound on $B_8$.\\
	
	\noindent $\bullet$ (Estimate of $B_9$): Again, we recall that
	\[B_9=-\int_{\R_+}a(u-\overline{u})S\,d\xi,\quad S = S_{I1}+S_{I2}+S_R,\]
	where
	\begin{align*}
		&S_{I1}=p(\overline{v})_\xi-p(v^{BL})_\xi-p(v^R)_\xi-p(v^S)_\xi,\\
		&S_{I2}=-\left(\frac{\overline{u}_{\xi}}{\overline{v}}-\frac{u^{BL}_\xi}{v^{BL}}-\frac{u^R_\xi}{v^R}-\frac{u^S_\xi}{v^S}\right)_\xi,\quad S_R=-\left(\frac{u^R_\xi}{v^R}\right)_\xi.
	\end{align*}
	Therefore, we split $B_9$ as
	\begin{align*}
	B_9&=-\int_{\R_+}a(u-\overline{u})S_{I1}\,d\xi-\int_{\R_+}a(u-\overline{u})S_{I2}\,d\xi-\int_{\R_+}a(u-\overline{u})S_{R}\,d\xi=:B_{91}+B_{92}+B_{93}.
	\end{align*}
	\noindent $\diamond$ (Estimate of $B_{91}$): Since $\overline{v}_\xi=v^{BL}_\xi+v^R_\xi+v^S_\xi$, we split the term $S_{I1}$ as
 \begin{equation}\label{SI1 decomposition}
	\begin{aligned}
	|S_{I1}|&=|p'(\overline{v})(v^{BL}_\xi+v^R_\xi+v^S_\xi)-p'(v^{BL})v^{BL}_\xi -p'(v^R)v^R_\xi -p'(v^S) v^S_\xi|\\
	&=|(p'(\overline{v})-p'(v^{BL}))v^{BL}_\xi +(p'(\overline{v})-p'(v^R))v^R_\xi +(p'(\overline{v})-p'(v^S))v^S_\xi|\\
	&\le C|v^{BL}_\xi||v^R+v^S-v_*-v^*|+C|v^R_\xi||v^{BL}+v^S-v_*-v^*|\\
	&\quad +C|v^S_\xi||v^{BL}+v^R-v_*-v^*|\\
	&\le C|v^{BL}_\xi|(|v^R-v_*|+|v^S-v^*|)+C|v^R_\xi|(|v^{BL}-v_*|+|v^S-v^*|)\\
	&\quad +C|v^S_\xi|(|v^{BL}-v_*|+|v^R-v^*|)\\
	&=C\Big[(v^{BL}_\xi(v^R-v_*)+v^R_\xi (v_*-v^{BL}))+(v^{BL}_\xi (v^S-v^*)+v^S_\xi (v_*-v^{BL}))\\
	&\phantom{=C\Big[}+(v^R_\xi (v^S-v^*)+v^S_\xi (v^*-v^R))\Big]\\
	&=:C\big(R_1+R_2+R_3\big),
	\end{aligned}
 \end{equation}
 where we use the following properties:
 \[v^{BL}_{\xi},v^R_\xi,v^S_\xi>0,\quad v^R-v_*,\,v^*-v^R,\,v_*-v^{BL},\,v^S-v^*>0.\]
 In particular, $R_1,R_2,R_3>0$. Therefore, we further decompose $B_{91}$ as
 \[|B_{91}|\le\int_{\R_+}a|u-\overline{u}||S_{I1}|\,d\xi\le \int_{\R_+}a|u-\overline{u}|(R_1+R_2+R_3)\,d\xi.\]
 Now, using Lemma \ref{lem:wave-interaction}, the interpolation inequality, Young's inequality, and the a priori assumption, we obtain
\begin{align}\label{r1}
	\begin{aligned}
	\int_{\R_+}a|u-\overline{u}|R_1\,d\xi&\le C\|u-\overline{u}\|_{L^\infty}\int_{\R_+}\left(v^{BL}_\xi(v^R-v_*)+v^R_\xi(v_*-v^{BL})\right)\,d\xi\\
	&\le C\|(u-\overline{u})_\xi \|_{L^2}^{1/2} \|u-\overline{u}\|_{L^2}^{1/2}  \left(\delta_R^{1/8}(1+t)^{-7/8}\log(1+\delta_{BL}t)+\frac{\delta_{BL}\delta_R}{1+\delta_{BL}t}\right)\\
	&\le \frac{1}{100}D_{u_1}+C\e_1^{2/3}\left(\delta_R^{1/6}(1+t)^{-7/6}\log(1+\delta_{BL}t)^{4/3}+\frac{\delta_{BL}^{4/3}\delta_R^{4/3}}{(1+\delta_{BL}t)^{4/3}}\right).
	\end{aligned}
	\end{align}

	By the same argument, we have
	\begin{align}\label{r2}
	\begin{aligned}
		\int_{\R_+}a|u-\overline{u}|R_2\,d\xi
		&\le C\|u-\overline{u}\|_{L^\infty} \left(\delta_{BL}\delta_Se^{-c\delta_St}+\frac{\delta_{BL}\delta_S}{1+\delta_{BL}t}\right)\\
		&\le \frac{1}{100}D_{u_1}+C\e_1^{2/3}\left(\delta_{BL}^{4/3}\delta_S^{4/3}e^{-c\delta_St}+\frac{\delta_{BL}^{4/3}\delta_S^{4/3}}{(1+\delta_{BL}t)^{4/3}}\right),
	\end{aligned}
	\end{align}
	
	and
	\begin{align*}
		\int_{\R_+}a|u-\overline{u}|R_3\,d\xi
		&\le C\|u-\overline{u}\|_{L^\infty} \left(\delta_{R}\delta_S e^{-c\delta_St}+\delta_R\delta_Se^{-ct}\right)\\
		&\le \frac{1}{100}D_{u_1}+C\e_1^{2/3}\left(\delta_{R}^{4/3}\delta_{S}^{4/3}e^{-c\delta_St}+\delta_R^{4/3}\delta_S^{4/3}e^{-ct}\right).
	\end{align*} 
	
	Combining the above estimates, we obtain
	\begin{align}
	\begin{aligned}\label{est:B81}
	|B_{91}|&\le \frac{3D_{u_1}}{100}+C\e_1^{2/3}\Bigg(\delta_R^{1/6}(1+t)^{-7/6}\log(1+\delta_{BL}t)^{4/3}+\frac{\delta_{BL}^{4/3}\delta_R^{4/3}}{(1+\delta_{BL}t)^{4/3}}\\
	&\hspace{3cm} +\delta_{BL}^{4/3}\delta_S^{4/3}e^{-c\delta_St}+\frac{\delta_{BL}^{4/3}\delta_S^{4/3}}{(1+\delta_{BL}t)^{4/3}}+\delta_{R}^{4/3}\delta_{S}^{4/3}e^{-c\delta_St}+\delta_R^{4/3}\delta_S^{4/3}e^{-ct}\Bigg).
	\end{aligned}
	\end{align}
	
	\noindent $\diamond$ (Estimate of $B_{92}$): We split the term $S_{I2}$ as
	\begin{align}
	\begin{aligned}\label{SI2_estimate}
		|S_{I2}|&=\left|\left(\frac{u^{BL}_\xi+u^R_\xi+u^S_\xi}{\overline{v}}-\frac{u^{BL}_\xi}{v^{BL}}-\frac{u^R_\xi}{v^R}-\frac{u^S_\xi}{v^S}\right)_\xi\right|\\
		&\le C\Bigg[\big|(u^{BL}_{\xi\xi},u^{BL}_\xi v^{BL}_\xi)\big|(v^R-v_*,v^S-v^*)+\big|(u^{R}_{\xi\xi},v^{R}_\xi u^{R}_\xi)\big|(v^{BL}-v_*,v^S-v^*)\\
		&\hspace{1cm} + \big|(u^{S}_{\xi\xi},u^{S}_\xi v^{S}_\xi)\big|(v^{BL}-v_*,v^R-v^*)\\
		&\hspace{1cm} +|u^{BL}_\xi|(|v^R_\xi|+|v^S_\xi|)+|u^{R}_\xi|(|v^{BL}_\xi|+|v^S_\xi|)+|u^{S}_\xi|(|v^R_\xi|+|v^{BL}_\xi|)\Bigg].
	\end{aligned}
	\end{align}
	Using
	\[|u^{BL}_{\xi\xi}|\le C|u^{BL}_\xi|\sim C|v^{BL}_\xi|,\quad |u^R_{\xi\xi}|\le C|u^R_{\xi}|\sim C|v^R_{\xi}|,\quad |u^S_{\xi\xi}|\le C|u^S_{\xi}|\sim C|v^S_{\xi}|,\]
 	we have
	\begin{align*}
	|B_{92}|&\le \int_{\R_+} a|u-\overline{u}||S_{I2}|\,d\xi\\
	&\le C\int_{\R_+} a|u-\overline{u}|\Big(|v^{BL}_\xi||(v^R-v_*,v^S-v^*)|+|v^R_\xi||(v^{BL}-v_*,v^S-v^*)|\\
	&\hspace{3cm}+|v^S_\xi||(v^{BL}-v_*,v^R-v^*)|\Big)\,d\xi\\
	&\quad + C\int_{\R_+}a|u-\overline{u}|\Big(|u^{BL}_\xi|(|v^R_\xi|+|v^S_\xi|)+|u^{R}_\xi|(|v^{BL}_\xi|+|v^S_\xi|)+|u^{S}_\xi|(|v^R_\xi|+|v^{BL}_\xi|)\Big)\,d\xi\\
	&=:B_{921}+B_{922}.
	\end{align*}
	Since the terms in $B_{921}$ are exactly the same term appearing in the estimate of $B_{91}$, the same estimate in \eqref{est:B81} holds for $B_{921}$. To control $B_{922}$, we note that 
	\[u^{BL}_{\xi}\sim v^{BL}_\xi,\quad u^R_{\xi}\sim v^R_\xi,\quad u^S_\xi\sim v^S_\xi.\]
	Therefore, we again use the Sobolev interpolation inequality and Young's inequality to obtain
	\begin{align*}
	B_{922}&\le C\int_{\R_+} a|u-\overline{u}|(|v^{BL}_\xi||v^R_{\xi}|+|v^R_\xi||v^S_\xi|+|v^S_\xi||v^{BL}_\xi|)\,d\xi\\
	&\le C\|u-\overline{u}\|_{L^\infty}\int_{\R_+} (|v^{BL}_\xi||v^R_{\xi}|+|v^R_\xi||v^S_\xi|+|v^S_\xi||v^{BL}_\xi|)\,d\xi\\
	&\le \frac{D_{u_1}}{100}+C\e_1^{2/3}\left(\int_{\R_+} (|v^{BL}_\xi||v^R_{\xi}|+|v^R_\xi||v^S_\xi|+|v^S_\xi||v^{BL}_\xi|)\,d\xi\right)^{4/3}.
	\end{align*}
	On the other hand, using \eqref{BL_properties}, \eqref{shock-properties}, and \eqref{est:rarefaction}, we have
	\begin{align*}
	\int_{\R_+}|v^{BL}_\xi||v^R_\xi|\,d\xi&\le \frac{C\delta^{1/8}_R\delta^2_{BL}}{(1+t)^{7/8}}\int_{\R_+}\frac{1}{(1+\delta_{BL}\xi)^2}\,d\xi\le \frac{C\delta^{1/8}_R \delta_{BL}}{(1+t)^{7/8}},\\
	\int_{\R_+}|v^{R}_\xi||v^S_\xi|\,d\xi&\le \frac{C\delta^{1/8}_R\delta^2_{S}}{(1+t)^{7/8}}\int_{\R_+}e^{-C\delta_S|\xi-(\sigma-\sigma_-)t-X(t)-\beta|}\,d\xi\le \frac{C\delta^{1/8}_R\delta_{S}}{(1+t)^{7/8}},\\
	\int_{\R_+}|v^S_\xi||v^{BL}_\xi|\,d\xi&\le C\delta^2_S\delta_{BL}^2\int_{\R_+}\frac{e^{-C\delta_S|\xi-(\sigma-\sigma_-)t-X(t)-\beta|}}{(1+\delta_{BL}\xi)^2}\,d\xi\\
	&=C\delta^2_S\delta_{BL}^2\left(\int_0^{bt}+\int_{bt}^{\infty}\right)\frac{e^{-C\delta_S|\xi-(\sigma-\sigma_-)t-X(t)-\beta|}}{(1+\delta_{BL}\xi)^2}\,d\xi \le C\delta_S\delta_{BL} e^{-c\delta_St}+ \frac{C\delta^2_S\delta_{BL}}{1+\delta_{BL}t},
	\end{align*}
	where $b = (\sigma - \sigma_-)/2$. Thus, 
	\begin{equation}\label{est:B822}
	B_{922}\le \frac{D_{u_1}}{100}+C\e_1^{2/3}\Bigg(\frac{\delta_R^{1/6} (\delta_{BL}^{4/3}+\delta_S^{4/3})}{(1+t)^{7/6}}+C\delta_S^{4/3}\delta_{BL}^{4/3}e^{-c\delta_St}+\frac{C\delta_S^{8/3}\delta_{BL}^{4/3}}{(1+\delta_{BL}t)^{4/3}}\Bigg).
	\end{equation}
	
	\noindent $\diamond$ (Estimate of $B_{93}$):
	Finally, we estimate $B_{93}$ as
	\begin{align}
	\begin{aligned}\label{SR estimate}
	    |B_{93}|&=\left|\int_{\R_+}a(u-\overline{u})\left(\frac{u^R_\xi}{v^R}\right)_\xi\,d\xi\right|\le C\int_{\R+}|u-\overline{u}|(|u^R_{\xi\xi}|+|u^R_\xi||v^R_\xi|)\,d\xi\\
	    &\le C\|u-\overline{u}\|_{L^\infty}\left(\int_{\R_+}|u^R_{\xi\xi}|+|u^R_\xi||v^R_\xi|\,d\xi\right).
	\end{aligned}
	\end{align}
	Since
	\[\int_{\R_+}|u^R_\xi||v^R_\xi|\,d\xi \le \frac{C\delta_R^{1/8}\delta_R}{(1+t)^{7/8}},\]
	we use the same argument to derive
	\begin{equation}\label{est:B83}
	|B_{93}|\le \frac{D_{u_1}}{100}+C\e_1^{2/3}\left(\frac{\delta_R^{1/6}\delta_R^{4/3}}{(1+t)^{7/6}}+\|u^R_{\xi\xi}\|_{L^1}^{4/3}\right).\end{equation}
	Combining the estimates \eqref{est:B81}, \eqref{est:B822}, and \eqref{est:B83}, we finally derive
	\begin{align*}
    \begin{aligned}
     	B_9&\le \frac{8D_{u_1}}{100}+C\e_1^{2/3}\Bigg(\delta_R^{1/6}(1+t)^{-7/6}\log(1+\delta_{BL}t)^{4/3}+\frac{\delta_{BL}^{4/3}\delta_R^{4/3}}{(1+\delta_{BL}t)^{4/3}}\\
	&\hspace{3.2cm} +(\delta_{BL}^{4/3}+\delta_R^{4/3})\delta_S^{4/3}e^{-c\delta_St}+\frac{\delta_{BL}^{4/3}\delta_S^{4/3}}{(1+\delta_{BL}t)^{4/3}}\\
	&\hspace{3.2cm} +\delta_R^{4/3}\delta_S^{4/3}e^{-ct}+\frac{\delta_R^{1/6}(\delta_{BL}^{4/3}+\delta_R^{4/3}+\delta_S^{4/3})}{(1+t)^{7/6}}+\|u^R_{\xi\xi}\|_{L^1}^{4/3}\Bigg).
    \end{aligned}
    \end{align*}

\end{proof}
 
\subsection{Proof of Lemma \ref{lem:main}}
Now, we collect all the estimates to prove Lemma \ref{lem:main}. Combining \eqref{eq:rel_ent_2}, \eqref{est:leading}, \eqref{remaining}, and then  applying Lemma \ref{lem:rem1} and Lemma \ref{lem:rem2}, we have
\begin{align*}
&\frac{d}{dt}\int_{\R_+}a\eta(U|\overline{U})\,d\xi \le -C_1G_S-\frac{\delta_S}{4M}|\dot{X}|^2+\frac{C}{\delta_S}\sum_{i=3}^6|Y_i|^2+\sum_{i=2}^9B_i-G_1-G_{BL}-G_R-\frac{3}{8}D_{u_1}+J^{\textup{bd}}\\
&\quad \le -\frac{\delta_S}{4M}|\dot{X}|^2-\frac{1}{10}\left(C_1G_S+G_1+G_{BL}+G_R\right)-\frac{D_{u_1}}{100} + C\e_1^2 \int_{\R_+} |(p(v) - p(\bar{v}))_\xi|^2\,d\xi\\
&\qquad +C \mathcal{P}(t) + C\e_1^{2/3}\mathcal{Q}(t)+J^{\textup{bd}},
\end{align*}
where
\begin{align*}
	\mathcal{P}(t) := (\delta_{BL}^4 + \delta_R^4)\delta_S^2e^{-c\delta_St} + \frac{\delta_{BL}^4\delta_S^2}{(1+\delta_{BL}t)^4} + \delta_R^{4/3} \delta_S^{4/3} e^{-c\delta_St}+ \delta_S^{4/3}\delta_{BL}^{4/3} \left(e^{-c\delta_S t} + \frac{1}{(1+\delta_{BL}t)^{4/3}}\right),
\end{align*}
and
\begin{align}
\begin{aligned}\label{Qt}
\mathcal{Q}(t)&:=\delta_R^{1/6}(1+t)^{-7/6}\log(1+\delta_{BL}t)^{4/3}+\frac{\delta_{BL}^{4/3}\delta_R^{4/3}}{(1+\delta_{BL}t)^{4/3}}\\
&\quad +(\delta_{BL}^{4/3}+\delta_R^{4/3})\delta_S^{4/3}e^{-c\delta_St}+\frac{\delta_{BL}^{4/3}\delta_S^{4/3}}{(1+\delta_{BL}t)^{4/3}}\\
&\quad +\delta_R^{4/3}\delta_S^{4/3}e^{-ct}+\frac{\delta_R^{1/6}(\delta_{BL}^{4/3}+\delta_R^{4/3}+\delta_S^{4/3})}{(1+t)^{7/6}}+\|u^R_{\xi\xi}\|_{L^1}^{4/3}.
\end{aligned}
\end{align}

After integrating with respect to time, and using the equivalence
\[\int_{\R_+}\eta(U|\overline{U})\,d\xi\sim \|U-\overline{U}\|_{L^2}^2,\]
and \eqref{est:a}, we obtain
\begin{align*}
&\|U-\overline{U}\|_{L^2}^2+\int_0^t (\delta_S|\dot{X}|^2+G_S+G_1+G_{BL}+G_R+D_{u_1})\,ds\\
&\le C\|U_0-\overline{U}(0,\cdot)\|_{L^2}^2 + C\e^2 \int_0^t \|(p(v) - p(\bar{v}))_\xi\|_{L^2}^2\,ds +C \mathcal{P}(t)+C\e_1^{2/3}\int_0^t \mathcal{Q}(s)\,ds +C\left|\int_0^t J^{\textup{bd}}\,ds\right|.
\end{align*}
Using Lemma \ref{lem: boundary est-1}, the last term is bounded as
\[\left|\int_0^t J^{\textup{bd}}\,ds \right|\le C(\e_1+\delta_S^{1/3})e^{-C\delta_S\beta}+C\e_1^2\int_0^t \|(u-\overline{u})_{\xi\xi}\|_{L^2}^2\,ds.\]
On the other hand, using the smallness of $\delta_0$, we estimate $\mathcal{P}(t)$ and $\mathcal{Q}(t)$ as
\begin{align*}
	\int_0^t \mathcal{P}(s)\,ds &\le C\Big((\delta_{BL}^4 + \delta_R^4)\delta_S + \delta_{BL}^3 \delta_S^2 + \delta_R^{4/3}\delta_S^{1/3} +\delta_S^{1/3}\delta_{BL}^{4/3} + \delta_S^{1/3}\delta_{BL}^{1/3}\Big) \le C\delta_S^{1/3},
\end{align*}
and
\begin{align*}
\int_0^t \mathcal{Q}(s)\,ds&\le C\left(\delta^{1/6}_R+\delta^{1/3}_{BL}\delta^{4/3}_R+(\delta^{4/3}_{BL}+\delta^{4/3}_R)\delta^{1/3}_S+\delta^{1/3}_{BL}\delta^{4/3}_S+\delta_R^{1/6}(\delta^{4/3}_{BL}+\delta^{4/3}_R+\delta^{4/3}_S)\right)\\
&\quad +C\int_0^t\|u^R_{\xi\xi}\|_{L^1}^{4/3}\,ds\\
&\le C(\delta^{1/6}_R+\delta^{4/3}_S)+C\int_0^t\|u^R_{\xi\xi}\|_{L^1}^{4/3}\,ds.
\end{align*}
Finally, it follows from \eqref{rarefaction_properties} that
\[\|u^R_{\xi\xi}\|_{L^1}\le \begin{cases}
 C\delta_R\quad \mbox{if}\quad 1+t\le \delta_R^{-1},\\
 \frac{C}{1+t}\quad \mbox{if}\quad 1+t\ge \delta_R^{-1},
 \end{cases}\]
 which implies
\[\int_0^t \|u^R_{\xi\xi}\|_{L^1}^{4/3}\,ds\le C\delta_R^{1/3}.\]
 Thus, gathering all the necessary estimates, we finally derive
\begin{align*}
&\sup_{t\in[0,T]} \|U-\overline{U}\|_{L^2}^2+\int_0^t (\delta_S|\dot{X}|^2+G_S+G_1+G_{BL}+G_R+D_{u_1})\,ds\\
&\le C\|U_0-\overline{U}(0,\cdot)\|_{L^2}^2 + \delta_S^{1/3}+C\e_1^{2/3}\left(\delta^{1/6}_R+\delta^{4/3}_S\right) +C(\e_1+\delta_S^{1/3})e^{-C\delta_S\beta}\\
&\qquad +C\e_1^2 \int_0^t \|(p(v) - p(\bar{v}))_\xi\|_{L^2}^2\,ds+C\e_1^2\int_0^t \|(u-\overline{u})_{\xi\xi}\|_{L^2}^2\,ds,
\end{align*}
which completes the proof of Lemma \ref{lem:main}.

\section{Estimate on the $H^1$-perturbation}\label{sec:high-order}
\setcounter{equation}{0}
In this section, we derive the estimates on the first-order derivatives of the perturbation $U-\overline{U}$, completing the estimate on the $H^1$-perturbation. 

\subsection{Wave interaction estimates}
To control the $H^1$-perturbation, we need further control on the wave interactions as in the following lemma.

\begin{lemma}\label{L2 wave interaction}
Let $\mathcal{R}:= R_1 + R_2 + R_3$, where $R_1, R_2, R_3$ are defined as in \eqref{SI1 decomposition}. Then, there exist positive constants $c$ and $C$ such that
	\begin{align*}
		\int_{\R_+} \mathcal{R}^2\,d\xi &\le C\Bigg(\frac{\delta_R^{1/4}\delta_{BL}}{(1+t)^{7/4}}+\frac{\delta^{2/3}_R\delta^{5/3}_{BL}}{(1+t)^{4/3}}+\delta_{BL}^2\delta_S^2(\delta_{BL}+\delta_S)\left(e^{-c\delta_St}+\frac{1}{(1+\delta_{BL}t)^2}\right)\\
		&\hspace{1.5cm} +\delta_R^2\delta_S^2\left(e^{-c\delta_St}+e^{-ct}\right)\Bigg).
	\end{align*}
	
	Consequently, we have
	\[
	\int_0^t \int_{\R_+} \mathcal{R}^2\,d\xi\,ds \leq C\left(\delta_R^{1/4}\delta_{BL}+\delta^{2/3}_R\delta^{5/3}_{BL}+\delta_{BL}\delta_S(\delta_{BL}+\delta_S)+\delta_R^2\delta_S\right)\le C\delta_0^{1/4},
	\]
	and
	\begin{align*}
		\int_0^t \frac{\norm{\mathcal{R}}_{L^2(\R_+)}}{\sqrt{1+s}}\,ds\le C\delta_0^{1/8}.
	\end{align*}
\end{lemma}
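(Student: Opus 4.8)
The plan is to bound the spatial $L^2$-norm of $\mathcal{R}=R_1+R_2+R_3$ term-by-term, using for each $R_i$ the interpolation $\|R_i\|_{L^2}^2\le \|R_i\|_{L^\infty}\|R_i\|_{L^1}$, where the $L^1$-bounds are exactly the wave-interaction estimates of Lemma \ref{lem:wave-interaction} and the $L^\infty$-bounds come from the pointwise decay estimates in Lemma \ref{lem:boundary_layer}, Lemma \ref{lem:rarefaction}, and Lemma \ref{lem:viscous_shock}. This is the same mechanism already used in the proof of Lemma \ref{lem:rem2} to control $B_{82}$ via $\||v^S_\xi||v^R-v^*|\|_{L^2}^2\le \||v^S_\xi||v^R-v^*|\|_{L^\infty}\||v^S_\xi||v^R-v^*|\|_{L^1}$, so I would simply systematize it across all three interaction pairs.

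Concretely, for $R_1=v^{BL}_\xi(v^R-v_*)+v^R_\xi(v_*-v^{BL})$ I would split the integral at $\xi=t$ as in \eqref{wave_interaction_BL_R}: on $[0,t]$ the factor $v^R_\xi(v_*-v^{BL})$ dominates and $v^R_\xi\le C\delta_R^{1/8}(1+t)^{-7/8}$ gives an $L^\infty$-bound of order $\delta_R^{1/8}(1+t)^{-7/8}$ times $\min\{C\delta_{BL},1\}$; combined with the $L^1$-bound $C(\delta_R^{1/8}(1+t)^{-7/8}\log(1+\delta_{BL}t)+\delta_{BL}\delta_R/(1+\delta_{BL}t))$ from Lemma \ref{lem:wave-interaction}(1), this yields a contribution of order $\delta_R^{1/4}\delta_{BL}(1+t)^{-7/4}$ after absorbing the logarithm (at the cost of slightly lowering the exponent), plus a $\delta_R^{2/3}\delta_{BL}^{5/3}(1+t)^{-4/3}$-type piece from the second summand—though one must be a little careful that the $L^\infty$-bound for the BL-heavy part on $[t,\infty)$ scales like $\delta_{BL}^2(1+\delta_{BL}t)^{-2}\cdot\delta_R$. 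For $R_2$ I would use Lemma \ref{lem:wave-interaction}(2) for the $L^1$-bound and $|v^{BL}_\xi|\le C\delta_{BL}^2$, $|v^S_\xi|\le C\delta_S^2$ together with \eqref{est:shock_at_0} for the $L^\infty$-bound, yielding the $\delta_{BL}^2\delta_S^2(\delta_{BL}+\delta_S)(e^{-c\delta_St}+(1+\delta_{BL}t)^{-2})$ term; similarly $R_3$ uses Lemma \ref{lem:wave-interaction}(3) and the exponential decay \eqref{rarefaction_properties}, \eqref{shock-properties} to give $\delta_R^2\delta_S^2(e^{-c\delta_St}+e^{-ct})$. Summing gives the claimed pointwise-in-time bound on $\int_{\R_+}\mathcal{R}^2\,d\xi$.

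For the time-integrated estimates, I would integrate each term over $s\in[0,t]$ and use elementary one-variable integral bounds: $\int_0^\infty(1+s)^{-7/4}\,ds<\infty$ gives the $\delta_R^{1/4}\delta_{BL}$ piece, $\int_0^\infty(1+s)^{-4/3}\,ds<\infty$ gives the $\delta_R^{2/3}\delta_{BL}^{5/3}$ piece, $\int_0^\infty e^{-c\delta_Ss}\,ds\le C/\delta_S$ turns $\delta_{BL}^2\delta_S^2(\delta_{BL}+\delta_S)e^{-c\delta_Ss}$ into $\delta_{BL}^2\delta_S(\delta_{BL}+\delta_S)\le\delta_{BL}\delta_S(\delta_{BL}+\delta_S)$, and $\int_0^\infty(1+\delta_{BL}s)^{-2}\,ds\le C/\delta_{BL}$ handles the remaining algebraic-decay pieces, while $\int_0^\infty\delta_R^2\delta_S^2(e^{-c\delta_Ss}+e^{-cs})\,ds\le C\delta_R^2\delta_S$. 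Each resulting term is bounded by $C\delta_0^{1/4}$ using $\delta_R,\delta_S<\delta_0$ and the crude bound $\delta_0^k\le\delta_0^{1/4}$ for the relevant exponents $k\ge 1/4$. For the last estimate $\int_0^t\|\mathcal{R}\|_{L^2}(1+s)^{-1/2}\,ds$, I would apply Cauchy–Schwarz: $\int_0^t\|\mathcal{R}\|_{L^2}(1+s)^{-1/2}\,ds\le\big(\int_0^t\|\mathcal{R}\|_{L^2}^2\,ds\big)^{1/2}\big(\int_0^t(1+s)^{-1}\cdot(\text{dummy})\big)^{1/2}$—more precisely, split $\|\mathcal{R}\|_{L^2}=\|\mathcal{R}\|_{L^2}^{1/2}\cdot\|\mathcal{R}\|_{L^2}^{1/2}$ is not needed; instead note $\|\mathcal{R}\|_{L^2}(1+s)^{-1/2}\le\frac12(\|\mathcal{R}\|_{L^2}^2 h(s)+h(s)^{-1}(1+s)^{-1})$ for a suitable weight $h$, or simply use that the pointwise bound on $\|\mathcal{R}\|_{L^2}$ already decays faster than $(1+s)^{-1/2}$ in its slowest piece (which is $\sim(1+s)^{-7/8}\delta_R^{1/8}\delta_{BL}^{1/2}$ after taking a square root of the $(1+t)^{-7/4}$ term), so that $\|\mathcal{R}\|_{L^2}(1+s)^{-1/2}\lesssim\delta_R^{1/8}\delta_{BL}^{1/2}(1+s)^{-11/8}+\cdots$ is integrable with integral $\le C\delta_0^{1/8}$.

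The main obstacle is bookkeeping rather than conceptual: one must carefully track which summand of each $R_i$ dominates on which part of the half-line (the split points $t$, $bt$, $-\sigma_-t$ differ across $R_1,R_2,R_3$), and ensure the $L^\infty\times L^1$ products genuinely reproduce the exponents in the statement—in particular handling the logarithmic factor $\log(1+\delta_{BL}t)$ appearing in Lemma \ref{lem:wave-interaction}(1), which I would absorb by writing $\log(1+\delta_{BL}t)\le C_\epsilon(1+t)^\epsilon$ and choosing $\epsilon$ small enough that $7/4-\epsilon>1$, thereby slightly degrading the rate but keeping time-integrability; alternatively one keeps the log and notes $\int_0^\infty(1+s)^{-7/4}\log(1+\delta_{BL}s)\,ds<\infty$ directly. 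A secondary technical point is that the $L^\infty$-norm of products like $v^S_\xi(v_*-v^{BL})$ near the shock location requires the same localization argument (using $|X(t)|\le C\e_1 t$ and $\beta$ large) already carried out in the proof of Lemma \ref{lem:wave-interaction} to guarantee $|\xi-(\sigma-\sigma_-)t-X(t)-\beta|\gtrsim bt+|\xi-\tfrac{3bt}{2}|$ on the relevant ranges, so I would invoke those inequalities verbatim.
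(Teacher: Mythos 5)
Your plan coincides with the paper's argument only for $R_3$, where the paper itself uses the $L^\infty\times L^1$ interpolation you propose; for $R_1$ and $R_2$ the paper does something substantially more delicate, and this is where your proposal has a genuine gap. Concretely, take $R_2=v^{BL}_\xi(v^S-v^*)+v^S_\xi(v_*-v^{BL})$: with the global bounds $\|v^{BL}_\xi\|_{L^\infty}\le C\delta_{BL}^2$, $\|v^S_\xi\|_{L^\infty}\le C\delta_S^2$, $\|v^S-v^*\|_{L^\infty}\le\delta_S$, $\|v_*-v^{BL}\|_{L^\infty}\le\delta_{BL}$, you get $\|R_2\|_{L^\infty}\le C\delta_{BL}\delta_S(\delta_{BL}+\delta_S)$, and Lemma \ref{lem:wave-interaction}(2) gives $\|R_2\|_{L^1}\le C\delta_{BL}\delta_S\big(e^{-c\delta_St}+(1+\delta_{BL}t)^{-1}\big)$. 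Their product is $\delta_{BL}^2\delta_S^2(\delta_{BL}+\delta_S)\big(e^{-c\delta_St}+(1+\delta_{BL}t)^{-1}\big)$, i.e.\ you land on $(1+\delta_{BL}t)^{-1}$, not $(1+\delta_{BL}t)^{-2}$ as you claim; and $\int_0^\infty(1+\delta_{BL}s)^{-1}\,ds=\infty$, so the crucial consequence $\int_0^t\int_{\R_+}\mathcal{R}^2\,d\xi\,ds\le C\delta_0^{1/4}$ fails outright. The squared decay requires exploiting the spatial separation between the boundary layer (concentrated near $\xi=0$) and the shock (centered near $\xi\approx(\sigma-\sigma_-)t+\beta$): the paper does this by a Fubini/integration-by-parts trick (see \eqref{sdb} and the $R_{21}$ computation), which produces $\delta_S^3\delta_{BL}^2/(1+\delta_{BL}t)^2$. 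You could also salvage your route by splitting at $\xi=bt$ and using the \emph{localized} $L^\infty$ bounds $\|v^{BL}_\xi\|_{L^\infty([bt,\infty))}\le C\delta_{BL}^2/(1+\delta_{BL}t)^2$, etc.\ on each piece, but your write-up of the $R_2$ estimate does not do this, and the appeal to \eqref{est:shock_at_0} (a boundary-value bound) does not supply the missing pointwise localization.

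There is a secondary discrepancy for $R_1$. The paper's term $\delta_R^{2/3}\delta_{BL}^{5/3}(1+t)^{-4/3}$ comes from the pointwise device $v^R-v_*=\int_0^\xi v^R_\eta\,d\eta\le\|v^R_\xi\|_{L^3}\xi^{2/3}$, which trades the lack of decay in $v^R-v_*$ for a controllable weight $\xi^{2/3}$ against the boundary-layer derivative; your $L^\infty\times L^1$ interpolation gives instead a bound carrying a $\log(1+\delta_{BL}t)$ factor and a different structure. That weaker bound is still time-integrable and would be enough for the two time-integrated corollaries, but it does not reproduce the pointwise-in-time estimate as stated in the lemma, and absorbing the log by lowering the exponent changes the claimed inequality. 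So even the $R_1$ piece of your plan proves a slightly different statement than the one you are asked to prove.
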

\begin{proof}
	Note that
	\[
	\int_{\mathbb{R}_+}\mathcal{R}^2\,d\xi \leq C\left( \int_{\mathbb{R}_+} R_1^2\,d\xi + \int_{\mathbb{R}_+} R_2^2 \,d\xi + \int_{\mathbb{R}_+} R_3^2\,d\xi\right).
	\]
	
	\noindent $\bullet$ (Estimate of $R_1$) We first decompose $R_1$ as 
	\begin{equation}\label{R1 decomposition}
		\int_{\mathbb{R}_+} R_1^2\,d\xi \leq  2 \int_{\R_+} |v^R_\xi|^2|v_*-v^{BL}|^2\,d\xi + 2 \int_{\R_+}|v^{BL}_\xi|^2|v^R-v_*|^2\,d\xi.
	\end{equation}
	Using \eqref{est:rarefaction}, the first term of the right hand side of \eqref{R1 decomposition} is bounded as
	\[
	\int_{\R_+} |v^R_\xi|^2|v_*-v^{BL}|^2\,d\xi \leq \frac{C\delta_R^{1/4}}{(1+t)^{7/4}}\int_{\R_+} \frac{\delta_{BL}^2}{(1+\delta_{BL}\xi)^2}\,d\xi \leq \frac{C\delta_R^{1/4}\delta_{BL}}{(1+t)^{7/4}}.
	\]
	To estimate the second term of \eqref{R1 decomposition}, we first note that
	\[
	v^R - v_* = \int_0^\xi v_\xi^R\,d\xi \leq \|v_\xi^R\|_{L^3(\R_+)} \xi^{2/3}.
	\]
	Thus, we obtain 
	\begin{align*}
		\int_{\R_+} |v_\xi^{BL}|^2|v^R-v_*|^2\,d\xi &\leq \|v_\xi^R\|_{L^3(\R_+)}^2 \int_{\R_+} \xi^{4/3} |v_\xi^{BL}|^2\,d\xi\\
		&\leq \frac{C\delta_R^{2/3}}{(1+t)^{4/3}} \int_{\R_+} \xi^{4/3} \frac{\delta_{BL}^4}{(1+\delta_{BL}\xi)^4}\,d\xi\le C\frac{\delta_R^{2/3}\delta_{BL}^{5/3}}{(1+t)^{4/3}}.
	\end{align*}
	Therefore, 
	\[
	\int_{\mathbb{R}_+} R_1^2\,d\xi \leq \frac{C\delta_R^{1/4}\delta_{BL}}{(1+t)^{7/4}} + \frac{C\delta_R^{2/3}\delta_{BL}^{5/3}}{(1+t)^{4/3}}.
	\]
	$\bullet$ (Estimate of $R_2$)
	Again, we can decompose $R_2$ as
	\begin{equation}\label{R2 decomposition}
		\int_{\mathbb{R}_+} R_2^2\,d\xi \leq  2\int_{\mathbb{R}_+} |v_\xi^{BL}|^2|v^S - v^*|^2\,d\xi +  2\int_{\mathbb{R}_+} |v_\xi^{S}|^2|v^{BL} - v_*|^2\,d\xi=:R_{21}+R_{22}.
	\end{equation}
	We estimate $R_{21}$ as
	\begin{align*}
		\int_{\mathbb{R}_+} |v_\xi^{BL}|^2|v^S - v^*|^2\,d\xi &= \left(\int_0^{bt} + \int_{bt}^\infty \right) |v_\xi^{BL}|^2|v^S - v^*|^2\,d\xi \\
		&\leq \int_0^{bt} |v_\xi^{BL}|^2|v^S - v^*|^2\,d\xi + 
		\int_{bt}^\infty C\delta_S^2 |v_\xi^{BL}|^2\,d\xi\\
		&\leq\int_0^{bt} |v_\xi^{BL}|^2|v^S - v^*|^2\,d\xi + C\delta_S^2 \delta_{BL}^4 \int_{bt}^\infty \frac{1}{(1+\delta_{BL}\xi)^4}\,d\xi\\
		&\leq \int_0^{bt} |v_\xi^{BL}|^2|v^S - v^*|^2\,d\xi + \frac{C \delta_S^2 \delta_{BL}^3}{(1+\delta_{BL}t)^3},
	\end{align*}
	where $b  = (\sigma - \sigma_-)/2$.
	
	On the other hand, we use the Fubini theorem to obtain
	\begin{align*}
		\int_0^{bt} |v_\xi^{BL}|^2|v^S - v^*|^2\,d\xi &= \int_0^{bt} |v_\xi^{BL}(\xi)|^2 \int_{-\infty}^{\xi}(|(v^S - v^*)(z)|^2)_z\,dz\,d\xi\\
		&\leq 2 \int_{-\infty}^{bt}\int_0^{bt}|(v^S - v^*)(z)||(v^S - v^*)_z(z)| |v_\xi^{BL}(\xi)|^2\,d\xi\,dz\\
		&\leq 2 \left(\int_0^\infty |v_\xi^{BL}(\xi)|^2\,d\xi\right)\left( \int_{-\infty}^{bt}|(v^S - v^*)(z)||(v^S - v^*)_z(z)|\,dz\right)\\
		&\leq C\delta_{BL}^3 \delta_S^3 \int_{-\infty}^{bt} e^{-C\delta_S|z - (\sigma - \sigma_-)t - X(t) - \beta|}\,dz\leq C\delta_{BL}^3\delta_S^2e^{-C\delta_S t}.
	\end{align*}
	Thus, we combine the above two estimates to derive
	\begin{equation}\label{R_21}
		R_{21} \leq  C\delta_{BL}^3\delta_S^2e^{-C\delta_S t} + \frac{C\delta_S^2 \delta_{BL}^3}{(1+\delta_{BL}t)^3}.
	\end{equation}
	The estimate for $R_{22}$ follows from \eqref{sdb}:
	\begin{equation}\label{R_22}
		R_{22}\leq C\delta_S^3 \delta_{BL}^2 e^{-C\delta_S t} + \frac{C\delta_S^3 \delta_{BL}^2}{(1+\delta_{BL}t)^2}.
	\end{equation}
	Therefore, we substitute \eqref{R_21} and \eqref{R_22} to \eqref{R2 decomposition} to get
	\begin{align*}
		\int_{\mathbb{R}_+} R_2^2\,d\xi &\leq  C(\delta^3_{BL}\delta^2_S+\delta_{BL}^2\delta_S^3)\left(e^{-C\delta_S t}+\frac{1}{(1+\delta_{BL}t)^2}\right).
	\end{align*}
	
	\noindent $\bullet$ (Estimate of $R_3$) Finally, we estimate $R_3$ as
	\begin{align*}
		\int_{\mathbb{R}_+} R_3^2\,d\xi &\leq \left\||v_\xi^R||v^S - v^*| + |v_\xi^S||v^R - v^*|\right\|_{L^\infty}\int_{\mathbb{R}_+} R_3\,d\xi\\
		&\leq C\delta_S \delta_R  \left(\delta_R\delta_S e^{-C\delta_St}+\delta_R\delta_Se^{-Ct}\right),
	\end{align*}
	where we use Lemma \ref{lem:wave-interaction} (3) in the last inequality. Therefore, combining the estimates for $R_1$, $R_2$, and $R_3$, we have
	\begin{align}
		\begin{aligned}\label{SI1_L2}
		\int_{\R_+}\mathcal{R}^2\,d\xi &\le C\left(\int_{\R_+}R_1^2\,d\xi +\int_{\R_+}R_2^2\,d\xi+\int_{\R_+}R_3^2\,d\xi\right)\\
		&\le C\Bigg(\frac{\delta_R^{1/4}\delta_{BL}}{(1+t)^{7/4}}+\frac{C\delta^{2/3}_R\delta^{5/3}_{BL}}{(1+t)^{4/3}}+\delta_{BL}^2\delta_S^2(\delta_{BL}+\delta_S)\left(e^{-C\delta_St}+\frac{1}{(1+\delta_{BL}t)^2}\right)\\
		&\hspace{1.5cm} +\delta_R^2\delta_S^2\left(e^{-C\delta_St}+e^{-Ct}\right)\Bigg).
		\end{aligned}
	\end{align}
	
	In particular, we have
	\[
	\int_0^t \int_{\R_+}\mathcal{R}^2\,d\xi\,ds \leq C\left(\delta_R^{1/4}\delta_{BL}+\delta^{2/3}_R\delta^{5/3}_{BL}+\delta_{BL}\delta_S(\delta_{BL}+\delta_S)+\delta_R^2\delta_S\right)\le C\delta_0^{1/4}.
	\]
	Moreover, it follows from \eqref{SI1_L2} that
	\[
	\|\mathcal{R}\|_{L^2(\R_+)} \leq \frac{C(\delta_R^{1/8}\delta_{BL}^{1/2} + \delta^{1/3}_R\delta^{5/6}_{BL})}{(1+t)^{2/3}}+\frac{\delta_{BL}\delta_S\sqrt{\delta_{BL}+\delta_S}}{1+\delta_{BL}t}+C\delta_S (\delta_{BL}+\delta_{R})e^{-C\delta_S t} + C\delta_S \delta_R e^{-Ct}.
	\]
	Therefore,
	\begin{align*}
		\int_0^t \frac{\norm{\mathcal{R}}_{L^2(\R_+)}}{\sqrt{1+s}}\,ds
		&\leq C \Bigg(\int_0^t \frac{\delta_R^{1/8}\delta^{1/2}_{BL}+\delta^{1/3}_R\delta^{5/6}_{BL}}{(1+s)^{7/6}}\,ds +\int_0^t\frac{\delta_{BL}\delta_S\sqrt{\delta_{BL}+\delta_S}}{(1+\delta_{BL}s)^{3/2}}\,ds\\
		&\hspace{1.5cm} +\int_0^t \delta_S(\delta_{BL}+\delta_R) e^{-C\delta_S s}\,ds +\delta_S\delta_R\int_0^t e^{-Cs}\,ds \Bigg)\\
		&\leq  C\left(\delta_R^{1/8} \delta^{1/2}_{BL}+\delta^{1/3}_R\delta^{5/6}_{BL}+\delta_S\sqrt{\delta_{BL}+\delta_S}+(\delta_{BL}+\delta_R)+\delta_S\delta_R\right)\\
		&\le C\delta_0^{1/8}.
	\end{align*}
\end{proof}

\subsection{$H^1$-estimate for $v-\overline{v}$}
We first derive the $H^1$-estimate for the $v$-perturbation. As we will see below, we need to control the term such as $\|\pa_\xi(v-\overline{v})\|_{L^2}$. To this end, we introduce an effective velocity $h:=u-(\ln v)_\xi$ and consider the NS system in terms of $(v,h)$-variables:
 \begin{equation} \label{NS-h}
     \begin{aligned}
         &v_t-\sigma_- v_\xi -h_\xi=(\ln v)_{\xi\xi},\\
         &h_t-\sigma_-h_\xi +p(v)_\xi =0,
     \end{aligned}
 \end{equation}
so that the parabolic term is now on the $v$-variable. Then, we rewrite the BL solution and viscous shock wave in terms of $v$ and $h$. Precisely, we set $h^{BL}:=u^{BL}-\ln( v^{BL} )_\xi.$ Then, it is straightforward to verify that $(v^{BL},h^{BL})$ satisfies
\begin{equation*} 
    \begin{aligned}
        &-\sigma_- v^{BL}_\xi-h^{BL}_\xi = (\ln v^{BL} )_{\xi \xi},\\
        &-\sigma_- h^{BL}_\xi+p(v^{BL})_\xi=0.
    \end{aligned}
\end{equation*}
Similarly, once we define $h^{S}:=u^{S}-\ln( v^{S} )_\xi$, $(v^{S},h^{S})$ satisfies
\begin{equation*}
    \begin{aligned}
        &-\sigma v^{S}_\xi-h^{S}_\xi = (\ln v^{S} )_{\xi \xi},\\
        &-\sigma h^{S}_\xi+p(v^{S})_\xi=0.
    \end{aligned}
\end{equation*}

Using newly defined elementary waves, we define the superposition $\overline{h}$ as
\begin{equation*}
    \overline{h}(t,\xi):=h^{BL}(\xi)+u^R(t,\xi)+h^S(\xi-(\sigma-\sigma_-)t-X(t)-\beta)-u_*-u^*.
\end{equation*}
Then, the superposition $(\overline{v},\overline{h})$ satisfies
\begin{equation}
    \begin{aligned}\label{eq:compositve_wave_effective_vel}
        &\overline{v}_t-\sigma_-\overline{v}_\xi-h_\xi=-\dot{X}v^S_\xi + (\ln \overline{v} )_{\xi\xi} + E \\ 
        &\overline{h}_t-\sigma_-\overline{h}_\xi+p(\overline{v})_\xi =-\dot{X}h^S_\xi+S_{I1},
    \end{aligned}
\end{equation}
where $S_{I1}$ is the same term as in \eqref{SI1SI2} and $E:=E_I+E_R$ is defined as
\begin{align*}
    E_I&:= -\left(\ln(\overline{v})-\ln (v^{BL})-\ln(v^R)-\ln(v^S) \right)_{\xi\xi}, \\
    E_R&:=-(\ln(v^R))_{\xi\xi}.
\end{align*}
The system \eqref{NS-h} can still be written as a general hyperbolic system of the form
\[U_t+A(U)_\xi=\partial_\xi (M(U)\partial_\xi \nabla \eta(U)),\]
where now the conserved quantity $U$, flux $A$, the diffusion matrix $M$ and the entropy $\eta$ are 
\[U:=\begin{pmatrix} v \\ h \end{pmatrix}, \quad A(U):=\begin{pmatrix} -\sigma_- v - h \\ -\sigma_- h +p(v) \end{pmatrix}, \quad 
M(U):=\begin{pmatrix} \frac{1}{\gamma p(v)} &0 \\ 0 &0 \end{pmatrix},\quad \eta(U):=\frac{|h|^2}{2}+Q(v). \]
Then, the relative entropy, relative flux, and relative entropy flux for this system are given by
\begin{align*}
    &\eta(U|\overline{U}):= \frac{|h-\overline{h}|^2}{2}+Q(v|\overline{v}),\quad A(U|\overline{U}):=\begin{pmatrix}
        0 \\p(v|\overline{v})
    \end{pmatrix},\\
	& G(U; \overline{U})=(p(v)-p(\overline{v}))(h-\overline{h})-\sigma_- \eta(U|\overline{U}).
\end{align*}
Similarly, the equation \eqref{eq:compositve_wave_effective_vel} for the superposition $\overline{U}:=(\overline{v},\overline{h})$ can be written as
\[\overline{U}_t+A(\overline{U})_\xi=\partial_\xi (M(\overline{U})\partial_\xi \nabla \eta(\overline{U}))-\dot{X}\partial_\xi U^S+\begin{pmatrix}
	E\\S_{I1}
\end{pmatrix}.\]

Before we proceed further, we verify that the good terms $G_1$, $C_1G_S$, $G_{R}$ and $G_{BL}$ yield another form of the good term, which is useful in the later analysis.

\begin{lemma}\label{lem:G_v}
	There exists a constant $C$ such that
	\[G_v:=\int_{\R_+}|\overline{u}_\xi||v-\overline{v}|^2\,d\xi\le C(G_1+C_1G_S+G_{BL}+G_R).\]
\end{lemma}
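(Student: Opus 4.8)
The plan is to exploit the sign structure of the three waves composing $\overline{u}$. Since $u^{BL}_\xi>0$, $u^R_\xi>0$, and $u^S_\xi<0$, one has the pointwise bound $|\overline{u}_\xi|\le u^{BL}_\xi+u^R_\xi+|u^S_\xi|$, so that
\[
G_v\le \int_{\R_+}u^{BL}_\xi|v-\overline{v}|^2\,d\xi+\int_{\R_+}u^R_\xi|v-\overline{v}|^2\,d\xi+\int_{\R_+}|u^S_\xi||v-\overline{v}|^2\,d\xi,
\]
and I would bound the three pieces separately using different relative-quantity estimates.

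For the boundary-layer and rarefaction pieces, the plan is to use $|v-\overline{v}|^2\le C\,p(v|\overline{v})$ from Lemma~\ref{lem:relative_quantity}(1) (applicable because the a~priori assumption \eqref{smallness} together with Proposition~\ref{prop:local} keeps $v$ and $\overline{v}$ in a fixed neighbourhood of $v^*$), combined with $a\ge 1$ from \eqref{est:a}. This immediately yields
\[
\int_{\R_+}u^{BL}_\xi|v-\overline{v}|^2\,d\xi\le C\int_{\R_+}a\,u^{BL}_\xi\,p(v|\overline{v})\,d\xi=CG_{BL},\qquad
\int_{\R_+}u^R_\xi|v-\overline{v}|^2\,d\xi\le CG_R.
\]
For the shock piece, since $u^S_\xi$ is what generates $G_1$ and $G_S$, I would instead use $|v-\overline{v}|^2\le C|p(v)-p(\overline{v})|^2$ (which follows from Lemma~\ref{lem:relative_quantity}, using that $p'$ stays bounded away from zero near $v^*$), and then split around the combination appearing in $G_1$:
\[
|p-\overline{p}|^2\le 2\Big|p-\overline{p}-\frac{u-\overline{u}}{2C_*}\Big|^2+\frac{1}{2C_*^2}|u-\overline{u}|^2.
\]
Recalling $|u^S_\xi|=\sqrt{\delta_S}\,a_\xi$ from \eqref{est:a}, the first term contributes $\sqrt{\delta_S}\int_{\R_+}a_\xi|p-\overline{p}-\frac{u-\overline{u}}{2C_*}|^2\,d\xi=\tfrac{\sqrt{\delta_S}}{C_*}G_1\le CG_1$, since $C_*$ is bounded below by a positive constant for $\delta_S$ small; the second term contributes $\tfrac{1}{2C_*^2}\int_{\R_+}|u^S_\xi||u-\overline{u}|^2\,d\xi\le CG_S\le C\,C_1 G_S$. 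Collecting the three estimates gives the assertion.

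There is no real obstacle here; the only point requiring attention is to ensure the relative-quantity inequalities of Lemma~\ref{lem:relative_quantity} genuinely apply, i.e. that $v$, $\overline{v}$ — and hence $v^{BL}$, $v^R$, $v^S$ — remain within the admissible ranges, which is guaranteed by the smallness of $\delta_0$ together with \eqref{smallness} and the uniform bounds on $v$ from Proposition~\ref{prop:local}.
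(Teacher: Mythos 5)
Your proof is correct and follows essentially the same route as the paper: split $\overline{u}_\xi = u^{BL}_\xi + u^R_\xi + u^S_\xi$, absorb the boundary-layer and rarefaction pieces into $G_{BL}$ and $G_R$ via $|v-\overline{v}|^2 \le C\,p(v|\overline{v})$, and absorb the shock piece into $G_1 + C_1G_S$ by writing $|u^S_\xi| = \sqrt{\delta_S}\,a_\xi$ and splitting around $p-\overline{p}-\tfrac{u-\overline{u}}{2C_*}$, which is exactly the content of \eqref{est-vdiff} that the paper's proof cites. Your closing remark about verifying the admissibility hypotheses of Lemma~\ref{lem:relative_quantity} is a sound and appropriate check.
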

\begin{proof}
	Since $\overline{u}_{\xi}=u^{BL}_\xi+u^R_\xi+u^S_\xi$, we use the same estimate as in \eqref{est-vdiff} to obtain
	\begin{align*}
	G_v&=\int_{\R_+}|\overline{u}_\xi||v-\overline{v}|^2\,d\xi\le\int_{\R_+}(|u^{BL}_\xi|+|u^R_\xi|+|u^S_\xi|)|v-\overline{v}|^2\,d\xi\\
	&\le C\int_{\R_+}au^{BL}_\xi p(v|\overline{v})\,d\xi + C\int_{\R_+}au^{R}_{\xi}p(v|\overline{v})\,d\xi +C\int_{\R_+}|u^S_{\xi}||v-\overline{v}|^2\,d\xi\\
	&\le CG_{BL}+CG_R+C(G_1+C_1G_S).
	\end{align*}
\end{proof}

With the aid of \eqref{NS-h}, we derive the following estimate on the $H^1$-perturbation for $v$.

\begin{lemma}\label{lem:vhigh}
	Under the assumption of Proposition \ref{apriori-estimate}, we have
\begin{align}
\begin{aligned}\label{higher v}
    &\sup_{0\le t\le T}\left(\|v-\overline{v}\|_{H^1}^2+\|u-\overline{u}\|_{L^2}^2\right)+\int_0^t(\delta_S|\dot{X}|^2+G_S+G_1+G_{BL}+G_R+D_{v_1}+D_{u_1})\,ds\\
    &\, \, \le C\left(\|v(0,\cdot)-\overline{v}(0,\cdot)\|_{H^1}^2+\|u(0,\cdot)-\overline{u}(0,\cdot)\|_{L^2}^2\right)+Ce^{-C\delta_S \beta} + C\delta_0^{1/6} + \frac{1}{4C_2}\int_0^t D_{u_2}\,ds.
\end{aligned}
\end{align}
Here, $D_{v_1}$ and $D_{u_2}$ are defined as 
\[
D_{v_1}\coloneqq \int_{\R_+} \frac{1}{\gamma p(v)}|(p(v) - p(\overline{v}))_\xi|^2\,d\xi, \quad D_{u_2}\coloneqq \int_{\R_+} \frac{1}{v}|(u - \overline{u})_{\xi \xi}|^2\,d\xi,
\]
and the $C_2>0$ is the specific constant defined in \eqref{u_H1}.
\end{lemma}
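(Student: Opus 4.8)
The plan is to run the weighted relative-entropy ($a$-contraction) argument of Section~\ref{sec:4} once more, this time on the effective-velocity system \eqref{NS-h} whose diffusion sits on the $v$-equation, and then to add the outcome to Lemma~\ref{lem:main}. Writing \eqref{NS-h} in the general form with $U=(v,h)$, $\eta(U)=\tfrac{|h|^2}{2}+Q(v)$ and $M(U)=\mathrm{diag}(\tfrac{1}{\gamma p(v)},0)$, and differentiating $\int_{\R_+}a\,\eta(U|\overline{U})\,d\xi$ as in Lemma~\ref{lem:rel}, one obtains an identity
\[
\frac{d}{dt}\int_{\R_+}a\,\eta(U|\overline{U})\,d\xi=\dot{X}\,\widetilde{Y}+\widetilde{B}-\widetilde{G}+\widetilde{J}^{\textup{bd}},
\]
where the novelty is that the diffusion now produces the good term $D_{v_1}$, while the source $(E,S_{I1})$ carried by $\overline{U}$ in \eqref{eq:compositve_wave_effective_vel} adds two new bad terms, $\pm\int_{\R_+}a\,(p(v)-p(\overline{v}))E\,d\xi$ and $-\int_{\R_+}a\,(h-\overline{h})S_{I1}\,d\xi$ with $E=E_I+E_R$, to the ones already present in Lemma~\ref{lem:rel}. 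The $a_\xi$-localized positive terms and the $(v,h)$-analogues of $G_1,G_S,G_{BL},G_R$ enter $\widetilde{G}$ as before.

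For the leading cross term $\int_{\R_+}a_\xi(p(v)-p(\overline{v}))(h-\overline{h})\,d\xi$ I would apply the maximization of Lemma~\ref{lem:quadratic} verbatim with $u-\overline{u}$ replaced by $h-\overline{h}$, extracting $-C_*\int_{\R_+}a_\xi\big|(p(v)-p(\overline{v}))-\tfrac{h-\overline{h}}{2C_*}\big|^2\,d\xi$ together with $G_{BL},G_R$. For the shift term I would use $|\dot{X}|\le C\e_1$ from \eqref{smallness}, rewrite $h-\overline{h}=(u-\overline{u})-[(\ln v)_\xi-(\ln\overline{v})_\xi]$ plus the controllable wave corrections coming from the defect between $\overline{h}$ and $\overline{u}-(\ln\overline{v})_\xi$, and apply Young's inequality so that $\dot{X}\widetilde{Y}$ is dominated by $\tfrac{\delta_S}{C}|\dot{X}|^2$ (available after adding Lemma~\ref{lem:main}), by $C\e_1^2(G_1+C_1G_S+D_{v_1}+D_{u_1})$, and by interaction terms of the type appearing in Lemma~\ref{lem:wave-interaction}. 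The boundary term $\widetilde{J}^{\textup{bd}}$ is estimated exactly as in Lemma~\ref{lem: boundary est-1}: $v_--\overline{v}(t,0)=v^*-v^S(-(\sigma-\sigma_-)t-X(t)-\beta)=O(\delta_S e^{-C\delta_S\beta})$ and the corresponding trace of $h$ is controlled by the same exponential together with a Sobolev-interpolated derivative trace, which contributes the $C\e_1^2\int_0^tD_{u_2}\,ds$ piece; integrating in time yields $Ce^{-C\delta_S\beta}$. The remaining bad term $-\int_{\R_+}a\,\overline{h}_\xi\,p(v|\overline{v})\,d\xi$ reduces, using $\overline{h}_\xi=\overline{u}_\xi-(\ln v^{BL})_{\xi\xi}-(\ln v^S)_{\xi\xi}$ with $|(\ln v^{BL})_{\xi\xi}|\le C|v^{BL}_\xi|$ and $|(\ln v^S)_{\xi\xi}|\le C\delta_S|v^S_\xi|$, to $-G_{BL}-G_R$ plus $C(\delta_0+\e_1)$ times the good terms and a use of Lemma~\ref{lem:G_v}; here one must be careful that the non-small $\delta_{BL}$ is multiplied only by the positive term $G_{BL}$ and by $a_\xi$-weighted quantities.

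The new source contributions are the heart of the estimate. In $\int_{\R_+}a\,(p(v)-p(\overline{v}))E_I\,d\xi$ I would integrate by parts to move one derivative onto $p(v)-p(\overline{v})$; since $E_I=-(\ln\overline{v}-\ln v^{BL}-\ln v^R-\ln v^S)_{\xi\xi}$, the resulting first-order defect is pointwise $\lesssim\mathcal{R}=R_1+R_2+R_3$ of \eqref{SI1 decomposition}, so Young's inequality gives $\tfrac{1}{100}D_{v_1}+C\int_{\R_+}\mathcal{R}^2\,d\xi$, which by Lemma~\ref{L2 wave interaction} is space-time integrable of size $C\delta_0^{1/4}$; the term $-\int_{\R_+}a(h-\overline{h})S_{I1}\,d\xi$ is handled by the decomposition $S_{I1}=C\mathcal{R}$ and the argument for $B_{91}$ in Lemma~\ref{lem:rem2}, while $E_R=-(\ln v^R)_{\xi\xi}$ and $S_R$ are controlled by the $\|v^R_{\xi\xi}\|_{L^1}$ bound of \eqref{rarefaction_properties}. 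Assembling, I integrate in time, use $\int_{\R_+}\eta(U|\overline{U})\,d\xi\sim\|h-\overline{h}\|_{L^2}^2+\|v-\overline{v}\|_{L^2}^2$ and \eqref{est:a}, and add Lemma~\ref{lem:main}, which supplies $\|u-\overline{u}\|_{L^2}^2$, $D_{u_1}$ and $\int_0^t\delta_S|\dot{X}|^2\,ds$ and whose error $C\e_1^2\int_0^t\|(p(v)-p(\overline{v}))_\xi\|_{L^2}^2\,ds$ is absorbed into $\int_0^tD_{v_1}\,ds$ since $D_{v_1}\ge c\,\mathcal{D}_v$; the residual $C\e_1^2\int_0^tD_{u_2}\,ds$ is kept as $\tfrac{1}{4C_2}\int_0^tD_{u_2}\,ds$ for $\e_1$ small, and the collected interaction remainders sum to $C\delta_0^{1/6}$. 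Finally, the $H^1$-control of $v-\overline{v}$ follows from the pointwise identity
\[
\frac{(v-\overline{v})_\xi}{v}=(u-\overline{u})-(h-\overline{h})+\frac{\overline{v}_\xi(v-\overline{v})}{v\,\overline{v}}-(\ln v^R)_\xi+\Psi,
\]
where $\Psi$ is a first-order interaction term with $\|\Psi(t)\|_{L^2}^2\le C\delta_0$; since $\|\overline{v}_\xi\|_{L^\infty}\le C$, this gives $\|(v-\overline{v})_\xi\|_{L^2}^2\le C\big(\|u-\overline{u}\|_{L^2}^2+\|h-\overline{h}\|_{L^2}^2+\|v-\overline{v}\|_{L^2}^2+\delta_0\big)$, so the $\sup_{0\le t\le T}\|v-\overline{v}\|_{H^1}^2$ on the left is dominated by the quantities already controlled.

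I expect the main obstacle to lie in the source term $\int_{\R_+}a(p(v)-p(\overline{v}))E_I\,d\xi$: one has to verify that the interaction generated by the effective-velocity change of variables is, after one integration by parts, genuinely quadratic in $\mathcal{R}=R_1+R_2+R_3$ and therefore space-time integrable with a norm small in $\delta_0$ — which is exactly the content of Lemma~\ref{L2 wave interaction} and the reason it is needed — and that the degenerate, large boundary layer never appears multiplied by an uncontrolled $O(1)$ factor outside $G_{BL}$ and the $a_\xi$-weighted good terms. A secondary difficulty is the boundary bookkeeping, where the trace of $h$ at $\xi=0$ involves $v_\xi(t,0)$ rather than a prescribed constant; this is resolved by the exponential smallness $e^{-C\delta_S\beta}$ and Sobolev interpolation.
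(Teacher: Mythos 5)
Your proposal correctly identifies the effective velocity $h=u-(\ln v)_\xi$, the hyperbolic--parabolic system \eqref{NS-h}, the quadratic wave-interaction bound Lemma~\ref{L2 wave interaction}, and the final step of translating $h-\overline h$ back to $(v-\overline v)_\xi$ and adding Lemma~\ref{lem:main}. But there are two places where the argument as written deviates from what actually closes, and one of them is a genuine gap.

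First, you propose to run the \emph{weighted} relative-entropy estimate $\frac{d}{dt}\int_{\R_+}a\,\eta(U|\overline U)\,d\xi$ with the weight $a$ of \eqref{weight}, i.e.\ to re-run the full $a$-contraction for the $(v,h)$ system including the maximization of Lemma~\ref{lem:quadratic}. The paper does this only at the zeroth order; at the first order it takes $a\equiv 1$. The weight is unnecessary here, and it is not harmless: with $a_\xi\ne 0$ the term $-\int a\,\partial_\xi G(U;\overline U)\,d\xi$ produces the interior cross term $\int a_\xi(p-\overline p)(h-\overline h)\,d\xi$, and after completing the square one is left with a residual of the form $\big(\tfrac{1}{4C_*}-\tfrac{\sigma}{2}\big)\int a_\xi|h-\overline h|^2\,d\xi\sim\sqrt{\delta_S}\int a_\xi|h-\overline h|^2\,d\xi$. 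In the $(v,u)$ formulation such a residual is killed by the Poincar\'e inequality applied to $D_{u_1}$, because the diffusion sits on $u$. In the $(v,h)$ formulation the diffusion sits on $v$, so $D_{v_1}$ controls $p-\overline p$, not $h-\overline h$; the residual $\sqrt{\delta_S}\int a_\xi|h-\overline h|^2\,d\xi$ is not absorbed by the mechanism you cite and would need an extra argument (for instance expanding $h-\overline h$ and paying $D_{v_1}$, $G_S$, interaction terms). You do not address this. The unweighted choice $a\equiv 1$ sidesteps the problem entirely: $\mathcal I_1$ is then a pure boundary term.

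Second, and more importantly, the boundary term. Because $v(t,0)=v_-$ is prescribed but $v_\xi(t,0)$ is not, the trace $[|h-\overline h|^2]_{\xi=0}$ contains the uncontrolled quantity $[|(v-\overline v)_\xi|^2]_{\xi=0}$. Your proposal waves at ``a Sobolev-interpolated derivative trace, which contributes the $C\e_1^2\int_0^t D_{u_2}$ piece'' but does not explain how a trace of $(v-\overline v)_\xi$ becomes a bulk $D_{u_2}$. The actual mechanism is the first equation of the perturbed system, $(v-\overline v)_t-\sigma_-(v-\overline v)_\xi-(u-\overline u)_\xi=\dot X v^S_\xi$, evaluated at $\xi=0$: because $v(t,0)=v_-$ is constant, $[(v-\overline v)_t]_{\xi=0}=-[(v^S-v^*)_t]_{\xi=0}$ is exponentially small, $[|v^S_\xi|^2]_{\xi=0}$ is exponentially small, and $[|(u-\overline u)_\xi|^2]_{\xi=0}\le\|(u-\overline u)_\xi\|_{L^\infty}^2$ is handled by Gagliardo--Nirenberg plus Young with a \emph{free} small parameter. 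That free Young parameter, not $\e_1^2$, is what sets the coefficient of $D_{u_2}$; it must be tuned to $\tfrac{1}{10\max\{36 v_*^2,1\}C_2}$ precisely so that, after the reweighting and after adding \eqref{est_vu}, the total coefficient of $\int_0^t D_{u_2}\,ds$ is $\tfrac{1}{4C_2}$, which is then absorbable in Lemma~\ref{lem:uhigh}. Presenting this as an $\e_1^2$-smallness gain misidentifies where the constant comes from, and without the trade of the spatial trace for a time derivative via the evolution equation, the boundary estimate does not close.
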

\begin{proof}
	Following the same estimate for the relative entropy with the weight $a(t,x)\equiv 1$, (see for example \cite[Lemma 5.1]{HKKL_pre2}), we obtain
    \begin{equation}\label{est_rel_ent_H1}
    	\frac{d}{dt}\int_{\mathbb{R}_+} \eta(U(t,\xi)|\overline{U}(t,\xi))\, d \xi = \dot{X} \mathcal{Y}+\sum_{i=1}^6 \mathcal{I}_i,
    \end{equation}
    where
    \begin{align*}
    \mathcal{Y}(U) &:=\int_{\mathbb{R}_+} \nabla^2 \eta(\overline{U}) (U^S_\xi) (U-\overline{U})d\xi \\
    &=\int_{\mathbb{R}_+} h^S_\xi (h-\overline{h}) d \xi - \int_{\mathbb{R}_+} p'(\overline{v})v^S_\xi (v-\overline{v}) d\xi=:\mathcal{Y}_1+\mathcal{Y}_2, \\ 
        \mathcal{I}_1 &:=-\int_{\mathbb{R}_+} \partial_\xi G(U;\overline{U})d\xi=-\int_{\mathbb{R}_+} \partial_\xi \left((p(v)-p(\overline{v}))(h-\overline{h})-\sigma_- \eta(U|\overline{U}) \right) d\xi \\
        & = \left[(p(v)-p(\overline{v}))(h-\overline{h})\right]_{\xi=0} - \left[\frac{\sigma_-}{2}|h-\overline{h}|^2 \right]_{\xi=0} - \left[\sigma_-Q(v|\overline{v})  \right]_{\xi=0}\\
        & =: \mathcal{I}_{11} + \mathcal{I}_{12} + \mathcal{I}_{13}, \\
        \mathcal{I}_2 &:=-\int_{\mathbb{R}_+} \partial_\xi \nabla \eta (\overline{U})A(U|\overline{U})d\xi=-\int_{\mathbb{R}_+} \overline{h}_\xi p(v|\overline{v}) d\xi,\\
        \mathcal{I}_3 &:=\int_{\mathbb{R}_+} \left(\nabla \eta (U)-\nabla \eta (\overline{U})\right)\partial_\xi \left(M(U) \partial_\xi \left(\nabla \eta (U)-\nabla \eta(\overline{U}) \right) \right) d \xi\\
        &=\int_{\mathbb{R}_+}(p(v)-p(\overline{v}))\partial_\xi \left( \frac{1}{\gamma p(v)}\partial_\xi (p(v)-p(\overline{v})) \right) d\xi \\ 
        &=\left[-  \frac{1}{\gamma p(v)} (p(v)-p(\overline{v})) \partial_\xi (p(v)-p(\overline{v})) \right]_{\xi=0}-\int_{\mathbb{R}_+}  \frac{1}{\gamma p(v)} |(p(v)-p(\overline{v}))_\xi|^2 d\xi\\
        &=: \mathcal{I}_{31} -D_{v_1},\\
        \mathcal{I}_4 &:=\int_{\mathbb{R}_+} \left(\nabla \eta (U)-\nabla \eta (\overline{U}) \right)\partial_\xi \left(\left(M(U)-M(\overline{U})\right) \partial_\xi \nabla \eta(\overline{U})\right) d \xi \\
        &=\int_{\mathbb{R}_+} (p(v)-p(\overline{v}))\partial_\xi \left( \frac{p(\overline{v})-p(v)}{\gamma p(v)p(\overline{v})}\partial_\xi p(\overline{v}) \right) d\xi\\
        &=\left[ (p(v)-p(\overline{v}))^2  \frac{\partial_\xi p(\overline{v})}{\gamma p(v)p(\overline{v})} \right]_{\xi=0}+\int_{\mathbb{R}_+} \partial_\xi(p(v)-p(\overline{v}))  \frac{p(v)-p(\overline{v})}{\gamma p(v)p(\overline{v})}\partial_\xi p(\overline{v}) d\xi\\
        &=:\mathcal{I}_{41}+\mathcal{I}_{42},\\
        \mathcal{I}_5 &:=\int_{\mathbb{R}_+}(\nabla \eta)(U|\overline{U}) \partial_\xi (M(\overline{U})\partial_\xi \nabla \eta (\overline{U})) d\xi =-\int_{\mathbb{R}_+} p(v|\overline{v})(\ln \overline{v})_{\xi\xi} d\xi ,\\
        \mathcal{I}_6 &:=-\int_{\mathbb{R}_+} \nabla^2 \eta(\overline{U})(U-\overline{U})\begin{pmatrix}
            E \\ S_{I1}
        \end{pmatrix}d\xi =\int_{\mathbb{R}_+}p'(\overline{v})(v-\overline{v})E d\xi -\int_{\mathbb{R}_+} (h-\overline{h})S_{I1} d\xi\\
    	&=:\mathcal{I}_{61}+\mathcal{I}_{62}.
    \end{align*}
\noindent $\bullet$ (Estimates of $\dot{X}\mathcal{Y}$):
Using the similar argument in \eqref{SI1 decomposition}, we get
\begin{equation}\label{h decompose}
	\begin{aligned}
		|h-\overline{h}| &= \Big| (u-\overline{u}) + (\ln (v) -\ln (v^{BL}) - \ln (v^S) - \ln (v^{R}) )_\xi  +  \ln (v^R))_\xi\Big| \\ 
		&=   \Big|(u-\overline{u}) + (\ln (v) -\ln(\overline{v}) )_\xi + (\ln (\overline{v}) -\ln (v^{BL}) - \ln (v^S) - \ln (v^{R}) )_\xi  + \ln (v^R)_\xi\Big|\\
            &=  \bigg|(u-\overline{u}) + \frac{1}{v}(v-\overline{v})_\xi + \overline{v}_\xi \left(\frac{1}{v} - \frac{1}{\overline{v}}\right) \\
            &\qquad + (\ln (\overline{v}) -\ln (v^{BL}) - \ln (v^S) - \ln (v^{R}) )_\xi  + \ln (v^R)_\xi\bigg|\\
		&\le   |u-\overline{u}| +  C \big( |(v-\overline{v})_\xi| + |\overline{v}_\xi| |v-\overline{v}|\big) + C\mathcal{R} + C|v^R_\xi|,
	\end{aligned}
\end{equation}
where $\mathcal{R}$ is as in Lemma \ref{L2 wave interaction}.
Moreover, using $|h^S_\xi|\le |u^S_\xi|+C|v^S_\xi|+|v^S_{\xi\xi}|\le C|u^S_\xi|$, we estimate $\mathcal{Y}_1$ as
\begin{align*}
    |\mathcal{Y}_1| 
    &\le C \int_{\mathbb{R}_+} |u^S_\xi| \left( |u-\overline{u}| + |(v-\overline{v})_\xi| +|\overline{v}_\xi| |v-\overline{v}| +\mathcal{R}+|v^R_\xi| \right) d \xi \\
    &\le C \sqrt{\delta_S} \left[  \sqrt{G_{S}} +\delta_S \left(   \|(v-\overline{v})_\xi\|_{L^2(\mathbb{R}_+)}+\sqrt{G_v} + \|\mathcal{R} \|_{L^2(\mathbb{R}_+)} \right) \right] + \frac{C \delta_S}{1+t},
\end{align*}
where we use $\|v^R_\xi\|_{L^\infty}\le\frac{C}{1+t}$. Thus, we use Cauchy-Schwarz inequality to derive
\[|\dot{X}||\mathcal{Y}_1| \le  \frac{\delta_S}{4} |\dot{X}|^2 + CG_S + C\delta_S^2 \left( G_v +  D_{v_1} +\| \mathcal{R} \|^2_{L^2(\mathbb{R}_+)} \right)+\frac{C \delta_S}{(1+t)^2}. \]
Similarly, we obtain
\begin{align*}
    | \dot{X} | |\mathcal{Y}_2| \le \frac{\delta_S}{4} |\dot{X}|^2 + C G_S + C \sqrt{\delta_S} G_1.
\end{align*}

\noindent $\bullet$ (Estimates of $\mathcal{I}_1$, $\mathcal{I}_{31},$ and $\mathcal{I}_{41}$):
We first observe that
\begin{align*}
    \mathcal{I}_{11} + \mathcal{I}_{31} = \left[ (p(v)-p(\overline{v})) (u-\overline{u})\right]_{\xi=0} + \left[  (p(v)-p(\overline{v})) \left( \frac{v^{BL}_\xi}{v^{BL}}+\frac{v^S_\xi}{v^S}-\frac{\overline{v}^{-\gamma-1}}{v^{-\gamma}} \overline{v}_\xi\right)\right]_{\xi=0}.
\end{align*}
Then, we use \eqref{est_boundary} to have
\begin{align*}
\left[ (p(v)-p(\overline{v})) (u-\overline{u})\right]_{\xi=0} \le C\|p(v)-p(\overline{v})\|_{L^\infty} |u_- -\overline{u}(t,0)|  \le C \varepsilon_1 \delta_S e^{-C \delta_S t} e^{-C \delta_S \beta},
\end{align*}
and a similar estimate with $|v_--\overline{v}(t,0)|$ yields
\begin{align*}
	\left[  (p(v)-p(\overline{v})) \left( \frac{v^{BL}_\xi}{v^{BL}}+\frac{v^S_\xi}{v^S}-\frac{\overline{v}^{-\gamma-1}}{v^{-\gamma}} \overline{v}_\xi\right)   \right]_{\xi=0} &\le C |\overline{v}_\xi (t,0)| |v_- - \overline{v}(t,0)| \\ 
    &\le  C  (\delta_{BL}^2+\delta_R + \delta_S^2) \delta_S e^{-C \delta_S t} e^{-C \delta_S \beta},
\end{align*}
where we use $\overline{v}_\xi=v^{BL}_\xi+v^R_\xi+v^S_\xi$. 
Therefore,
\[|\mathcal{I}_{11}+\mathcal{I}_{31}|\le  C\delta_Se^{-C\delta_S t}e^{-C\delta_S\beta}.\]
Next, since
\begin{align*}
	&|\mathcal{I}_{13}|\le CQ(v(t,0)|\overline{v}(t,0))\le C|v_--\overline{v}(t,0)|^2,\\
	&|\mathcal{I}_{41}|\le C|\overline{v}_\xi(t,0)||p(v(t,0))-p(\overline{v}(t,0))|^2\le C|\overline{v}_\xi(t,0)||v_--\overline{v}(t,0)|^2,
\end{align*}
we use the same estimate as in \eqref{est_boundary} to derive
\begin{align*}
\mathcal{I}_{13} &\le C \delta_S^2 e^{-C \delta_S t} e^{-C \delta_S \beta},\\
\mathcal{I}_{41} &\le  C  (\delta_{BL}^2+\delta_R + \delta_S^2) \delta_S^2 e^{-C \delta_S t} e^{-C \delta_S \beta}\le C\delta_S^2e^{-C\delta_St}e^{-C\delta_S\beta}.
\end{align*}
Finally, to obtain the estimate on $\mathcal{I}_{12}$, we first note that
\begin{align*}
    \left[| \mathcal{R} |^2 \right]_{\xi=0} &\le C \Big[|v^{BL}_\xi |^2(|v^R-v_*|^2+|v^S-v^*|^2)+|v^R_\xi|^2(|v^{BL}-v_*|^2+|v^S-v^*|^2)\\
    &\qquad +|v^S_\xi|^2(|v^{BL}-v_*|^2+|v^R-v^*|^2)\Big]_{\xi=0} \\
    &\le C \left(|v^{BL}_\xi(t,0) |^2 +|v^R_\xi (t,0)|^2 \right) |v^S (t,0)-v^*|^2 + C\delta_{BL}^2 |v^R_\xi (t,0)|^2 + C(\delta_{BL}^2 + \delta_R^2 )  |v^S_\xi (t,0)|^2  \\
    &\le C \left( \delta_{BL}^4 + \delta_R^2 \right) \delta_S^2 e^{-C\delta_S t} e^{-C \delta_S \beta}  + \frac{ C \delta_R^{1/4}\delta_{BL}^2}{(1+t)^{7/4}} + C (\delta_{BL}^2 + \delta_R^2 ) \delta_S^4 e^{-C\delta_S t} e^{-C \delta_S \beta}.
\end{align*}
Thus, using \eqref{h decompose}, $\mathcal{I}_{12}$ can be bounded as
\begin{align*}
    \mathcal{I}_{12} & \le  C \left[|u-\overline{u}|^2 +|\overline{v}_\xi|^2 |v-\overline{v}|^2 \right]_{\xi=0}  +  C  \left[ |(v-\overline{v})_\xi|^2 \right]_{\xi=0}  + C\left[| \mathcal{R} |^2 \right]_{\xi=0} + C\left[ |v^R_\xi|^2 \right]_{\xi=0} \\ 
    &\le   C  \delta_S^2 e^{-C \delta_S t} e^{-C \delta_S \beta} +  C  \left[ |(v-\overline{v})_\xi|^2 \right]_{\xi=0}  + \frac{ C \delta_R^{1/4}\delta_{BL}^2}{(1+t)^{7/4}} + \frac{C \delta_R^{1/2}}{(1+t)^{3/2}}. 
\end{align*}
\noindent $\bullet$ (Estimates of $\mathcal{I}_2$, $\mathcal{I}_{42}$, $\mathcal{I}_5$, and $\mathcal{I}_6$): Since 
\begin{align*}
    |\overline{h}_\xi| \le C(|\overline{u}_\xi| + |\ln (v^{BL})_{\xi \xi}| + |\ln (v^S)_{\xi\xi}| ) \le C|\overline{u}|_\xi,
\end{align*}
 we estimate $\mathcal{I}_2$ as
 \begin{align*}
     \mathcal{I}_2 \le C \int_{\mathbb{R}_+} |\overline{u}_\xi| |v-\overline{v}|^2 \, d \xi \le C G_v.
 \end{align*}
 Similarly, we also obtain 
 \begin{align*}
     \mathcal{I}_5 \le C \int_{\mathbb{R}_+} |\overline{u}_\xi| |v-\overline{v}|^2 \, d \xi \le C G_v.
 \end{align*}
On the other hand, we estimate $\mathcal{I}_{42}$ as 
\begin{align*}
    \mathcal{I}_{42} \le \frac{D_{v_1}}{100}  + C \int_{\mathbb{R}_+} |\overline{v}_\xi| |v-\overline{v}|^2 \, d \xi \le \frac{D_{v_1}}{100}  + C G_{v}.
\end{align*}
Finally, we control the wave interaction terms $\mathcal{I}_{61}$ and $\mathcal{I}_{62}$.

To this end, observe that
\begin{align*}
	E_I=-\left(\frac{\overline{v}_\xi}{\overline{v}}-\frac{v^{BL}_\xi}{v^{BL}}-\frac{v^R_\xi}{v^R}-\frac{v^S_\xi}{v^S}\right)_\xi,\quad E_R=-\left(\frac{v^R_\xi}{v^R}\right)_\xi
\end{align*}
and
\begin{align*}
    &|u_{\xi\xi}^{BL}| \sim |v_{\xi\xi}^{BL}|, \quad |u_{\xi}^{BL}| \sim |v_{\xi}^{BL}|,\quad|u_{\xi\xi}^{S}| \sim |v_{\xi\xi}^{S}|, \quad |u_{\xi}^{S}| \sim |v_{\xi}^{S}|,\\
    &|u_{\xi\xi}^{R}| \sim |v_{\xi\xi}^{R}|, \quad |u_{\xi}^{R}| \sim |v_{\xi}^{R}|,\quad |\overline{u}_{\xi\xi}| \sim |\overline{v}_{\xi\xi}|, \quad |\overline{u}_{\xi}| \sim |\overline{v}_{\xi}|.
\end{align*}
This implies that
\[
|E| \sim |S_{I2}| + |S_R|.
\]
Therefore the estimate of $|\mathcal{I}_{61}|$ is the same as what we have done for the estimate of $B_{92}$ and $B_{93}$ in the proof of Lemma \ref{lem:rem2}. Thus, we can conclude that

\[
|\mathcal{I}_{61}| \leq \int_{\R_+} |p'(\overline{v})(v-\overline{v})E|\,d\xi \leq \frac{8}{100}D_{v_1} + C\e_1^{2/3}\mathcal{Q}(t),
\]
where $\mathcal{Q}(t)$ is the same term defined in \eqref{Qt}. To control $\mathcal{I}_{62}$, we use \eqref{h decompose} to have
\begin{align*}
    |\mathcal{I}_{62}|\leq\left|\int_{\R_+} (h-\overline{h})\mathcal{R}\,d\xi \right| &\leq C \int_{\R_+} |(u - \overline{u})||\mathcal{R}|\,d\xi + C\int_{\R_+} |\overline{v}_\xi||(v-\overline{v})||\mathcal{R}|\,d\xi\\
    &\quad + C \int_{\R_+} |(v-\overline{v})_\xi| |\mathcal{R}|\,d\xi + C\int_{\R_+} |\mathcal{R}|^2\,d\xi + C\int_{\R_+} |v_\xi^R||\mathcal{R}|\,d\xi.
\end{align*}
Note that the first term of the right-hand side is essentially the same term as $B_9$ in Lemma \ref{lem:rem2}. Therefore, we use H\"older's inequality and Young's inequality to obtain
\begin{align}\label{i62}
\begin{aligned}
     \left|\mathcal{I}_{62}\right| &\leq  \Big(C\e_1^{2/3}\mathcal{Q}(t) + \frac{8}{100}D_{u_1}\Big)+ \frac{D_{v_1}}{100}  + C\int_{\R_+} |\mathcal{R}|^2\,d\xi + C\frac{\delta_R^{1/2}}{\sqrt{1+t}}\norm{\mathcal{R}}_{L^2}+CG_v.
    \end{aligned}
\end{align}
We now combine all the estimates and substitute them into \eqref{est_rel_ent_H1} to obtain
\begin{align*}
\frac{d}{dt}&\int_{\R_+}\eta(U|\overline{U})\,d\xi+\frac{D_{v_1}}{2}\\
&\le \frac{\delta_S}{2}|\dot{X}|^2+C\sqrt{\delta_S}G_1+CG_S+CG_v + CD_{u_1}\\
&\quad +\frac{C\delta_S}{(1+t)^2}+C\delta_Se^{-C\delta_St}e^{-C\delta_S\beta}+C[|(v-\overline{v})_{\xi}|^2]_{\xi=0}+ \frac{ C \delta_R^{1/4}\delta_{BL}^2}{(1+t)^{7/4}}+\frac{C\delta_R^{1/2}}{(1+t)^{3/2}}\\
&\quad +C\e_1^{2/3}\mathcal{Q}(t)+C\int_{\R_+}|\mathcal{R}|^2\,d\xi +\frac{C\delta_R^{1/2}}{\sqrt{1+t}}\|\mathcal{R}\|_{L^2}.
\end{align*}
Then, we integrate over $[0,t]$ to obtain
\begin{align*}
	&\int_{\R_+} Q(v|\overline{v})(t,\xi)\,d\xi+\frac{1}{2}\|(h-\overline{h})(t,\cdot)\|_{L^2}^2+\int_0^t D_{v_1}\,ds\\
	&\le C\left(\|(v-\overline{v})(0,\cdot)\|_{L^2}^2+\|(h-\overline{h})(0,\cdot)\|_{L^2}^2\right)+\frac{C\delta_S}{2}\int_0^t|\dot{X}(s)|^2\,ds\\
	&\quad +C\sqrt{\delta_S}\int_0^tG_1\,ds+C\int_0^t G_S\,ds+C\int_0^t G_v\,ds + C\int_0^t D_{u_1}\,ds+C\int_0^t [|(v-\overline{v})_\xi|^2]_{\xi=0}\,ds\\
	&\quad +C\int_0^t \|\mathcal{R}\|_{L^2}^2\,ds+C\delta^{1/2}_R\int_0^t\frac{\|\mathcal{R}\|_{L^2}}{\sqrt{1+s}}\,ds+C\e_1^{2/3}(\delta^{1/6}_R+\delta^{4/3}_S)+Ce^{-C\delta_S\beta}+C\delta_0^{1/4}.
\end{align*}
On the other hand, we use the equation
\begin{equation}\label{eq:vpert}
	(v-\overline{v})_t-\sigma_-(v-\overline{v})_\xi-(u-\overline{u})_\xi = \dot{X} v^S_\xi
\end{equation}
to get
\begin{align*}
\int_0^t[|(v-\overline{v})_\xi|^2]_{\xi=0}\,ds &\le C\int_0^t\left(\left[(v-\overline{v})_t^2\right]_{\xi=0}+|\dot{X}(s)|^2[|v^S_\xi|^2]_{\xi=0}+[(u-\overline{u})_{\xi}^2]_{\xi=0}\right)\,ds\\
&\leq C\int_0^t \left[(v-\overline{v})_t^2\right]_{\xi=0}\,ds + C\delta_S^3 e^{-C\delta_S \beta}+C\int_0^t \|(u-\overline{u})_{\xi}\|_{L^\infty}^2\,ds.
\end{align*}
Since
\[
[(v^S - v^*)_t]_{\xi = 0 } \leq C|(v^S)'(-(\sigma - \sigma_-)t - X(t) - \beta)||\dot{X} + (\sigma - \sigma_-)| \leq C\delta_S^2 e^{-C\delta_S((\sigma - \sigma_-)t + \beta)},
\]
we obtain 
\begin{equation*}
	\begin{aligned}
		\int_0^t \left[(v-\overline{v})_t|^2 \right]_{\xi = 0} \,ds &= \int_0^t \left[(v^S - v^*)_t|^2 \right]_{\xi = 0}\,ds \\
    &\leq C\delta_S^4 e^{-C\delta_S \beta} \int_0^t  e^{-C\delta_S s}\,ds \leq C\delta_S^3 e^{-C \delta_S \beta}.
	\end{aligned}
\end{equation*}
Therefore, we use the Gagliardo-Nirenberg interpolation inequality to get
\begin{equation}\label{est:bdv}
\begin{aligned}
	C&\int_0^t[|(v-\overline{v})_\xi|^2]_{\xi=0}\,ds\\
&\le C\delta_S^3e^{-C\delta_S\beta}+C\int_0^t\|(u-\overline{u})_\xi\|_{L^\infty}^2\,ds\\
&\le C\delta_S^3e^{-C\delta_S\beta}+C\int_0^t \|(u-\overline{u})_{\xi}\|_{L^2}^2\,ds+\frac{1}{10\max\{36v_*^2, 1\}C_2}\int_0^t \norm{\frac{1}{\sqrt{v}}(u-\overline{u})_{\xi\xi}}^2_{L^2}\,ds\\
&\le C\delta_S^3e^{-C\delta_S\beta}+C\int_0^t D_{u_1}\,ds+\frac{1}{10\max\{36v_*^2, 1\}C_2}\int_0^t D_{u_2}\,ds.
\end{aligned}
\end{equation}
The choice of $C_2$ will become clear later. Then, thanks to the above estimate and Lemma \ref{L2 wave interaction}, we conclude that
\begin{align}
\begin{aligned}\label{est_vh}
	&\frac{1}{2}\|(h-\overline{h})(t,\cdot)\|_{L^2}^2+\int_0^t D_{v_1}\,ds\\
	&\, \, \le C\left(\|(v-\overline{v})(0,\cdot)\|_{L^2}^2+\|(h-\overline{h})(0,\cdot)\|_{L^2}^2\right)+\frac{C\delta_S}{2}\int_0^t|\dot{X}(s)|^2\,ds+C\sqrt{\delta_S}\int_0^tG_1\,ds+C\int_0^t G_S\,ds\\
	&\,\, \quad +C\int_0^t G_v\,ds+C\int_0^t D_{u_1}\,ds+\frac{1}{10\max\{36v_*^2, 1\}C_2}\int_0^t D_{u_2}\,ds +Ce^{-C\delta_S\beta}+C\delta_0^{1/6}.
\end{aligned}
\end{align}
On the other hand, to derive the estimate on $(v-\overline{v})_\xi$, we use \eqref{h decompose} to get 
\begin{align*}
    |(v-\overline{v})_\xi| \leq v |h-\overline{h}| + C\big( |u-\overline{u}|  + |\overline{v}_\xi| |v-\overline{v}| + \mathcal{R} + |v^R_\xi|\big).
\end{align*}
Since
\begin{equation*} 
\frac{v_-}{3} < v < 3v_*,
\end{equation*}
for small enough $\e_1$, we obtain
\begin{align}
\begin{aligned}\label{v_x h change 1}
    \|(v-\overline{v})_\xi(t,\cdot)\|_{L^2}^2 &\leq 2(3v_*)^2 \|(h-\overline{h})(t,\cdot)\|_{L^2}^2 + C\left(\|(u-\overline{u})(t,\cdot)\|_{L^2}^2 + \|\mathcal{R}\|_{L^2}^2 + \|v^R_\xi\|_{L^2}^2 \right)\\
    &\le 18v_*^2 \|(h-\overline{h})(t,\cdot)\|_{L^2}^2 + C\|(u-\overline{u})(t,\cdot)\|_{L^2}^2 + C\delta_0^{1/4},
\end{aligned}
\end{align}
where we use Lemma \ref{rarefaction_properties} and Lemma \ref{L2 wave interaction}. Similarly, we have
\begin{equation}\label{v_x h change 2}
     \|(h-\overline{h})(0,\cdot)\|_{L^2}^2 \leq C \|(v-\overline{v})_\xi (0,\cdot)\|_{L^2}^2 + C\|(u-\overline{u})(0,\cdot)\|_{L^2}^2 + C\delta_0^{1/4}.
\end{equation}
Therefore, from Lemma \ref{lem:main}, \eqref{est_vh}, \eqref{v_x h change 1}, and \eqref{v_x h change 2}, we get
\begin{equation}\label{v_x estimate}
\begin{aligned}
    &\|(v-\overline{v})_\xi (t,\cdot)\|_{L^2}^2 + \int_0^t D_{v_1}\,ds\\
    &\quad \leq \max\{36v_*^2, 1\} \left(\frac{1}{2}\|(h-\overline{h})(t,\cdot)\|_{L^2}^2 + \int_0^t D_{v_1}\,ds\right) + C\|(u-\overline{u})(t,\cdot)\|_{L^2}^2 + C\delta_0^{1/6}\\
    &\quad \leq C\Big( \|(v-\overline{v})_\xi (0,\cdot)\|_{H^1}^2 +  \|(u-\overline{u}) (0,\cdot)\|_{L^2}^2  \Big)+ Ce^{-C\delta_S \beta} + C\delta_0^{1/6}\\
    &\qquad + C_1\left(\int_0^t (\sqrt{\delta_S}G_1 + G_S + G_v + D_{u_1})\,ds + \delta_S \int_0^t |\dot{X}|^2\,ds \right)  + \frac{1}{10C_2}\int_0^t D_{u_2}\,ds.
\end{aligned}
\end{equation}
Now, we multiply \eqref{v_x estimate} by $\frac{1}{2\max\{C_1, 1\}}$, add the result to \eqref{est_vu} together with the smallness of $\delta_0, \e_1$, and use Lemma \ref{lem:G_v} to have
\begin{equation*}
    \begin{aligned}
        &\frac{1}{2\max\{C_1, 1\}}\|(v-\overline{v})_\xi (t,\cdot)\|_{L^2}^2 + \|U - \overline{U}\|^2_{L^2}\\
        &\qquad + \frac{1}{2}\left(\delta_S \int_0^t |\dot{X}|^2\,ds + \int_0^t (G_S + G_1 + G_{BL} + G_R + D_{v_1} + D_{u_1})\,ds \right)\\
        &\quad \leq C\Big(\|(v-\overline{v}) (0,\cdot)\|_{H^1}^2 + \|(u-\overline{u}(0,\cdot)\|_{L^2}^2 \Big)+ Ce^{-C\delta_S \beta} + C\delta_0^{1/6} + \left(\frac{1}{20C_2}+C\e_1^2 \right)\int_0^t D_{u_2}\,ds.
    \end{aligned}
\end{equation*}
Using the smallness of $\e_1$, we obtain the desired $H^1$-estimate on $v-\overline{v}$ in Lemma \ref{lem:vhigh}.
\end{proof}

\subsection{$H^1$-estimate for $u-\overline{u}$}
Finally, we present $H^1$-estimate for $u-\overline{u}$ and thereby completing the proof of Proposition \ref{apriori-estimate}.

\begin{lemma}\label{lem:uhigh}
	Under the assumption of Proposition \ref{apriori-estimate}, we have
   \begin{align*}
    &\norm{(v-\overline{v})(t,\cdot)}_{H^1}^2 +  \norm{(u-\overline{u})(t,\cdot)}_{H^1}^2 + \delta_S \int_0^t |\dot{X}|^2\,ds \\
    &\qquad + \int_0^t (G_1 + G_S + G_R + G_{BL} + D_{v_1} + D_{u_1} + D_{u_2})\,ds\\
    &\quad \leq C\big( \norm{(v-\overline{v})(0,\cdot)}_{H^1}^2 +  \norm{(u-\overline{u})(0,\cdot)}_{H^1}^2 \big) + C\delta_0^{1/6} + Ce^{-C\delta_S \beta}.
\end{align*}
\end{lemma}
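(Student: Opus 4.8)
The plan is to upgrade the estimate of Lemma~\ref{lem:vhigh} by producing the missing top-order dissipation $\int_0^tD_{u_2}\,ds$ together with $\sup_t\|(u-\overline{u})_\xi\|_{L^2}^2$, through a direct energy estimate on the once-$\xi$-differentiated momentum equation, and then to combine the two inequalities with a suitable choice of the constant $C_2$ so that the $D_{u_2}$-terms on the two sides absorb each other. Throughout, write $\phi:=u-\overline{u}$ and $\psi:=v-\overline{v}$.

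Subtracting the momentum equation of \eqref{eq:composite_wave} from that of \eqref{eq:NS_inflow} and isolating the parabolic term yields
\begin{equation*}
\phi_t-\sigma_-\phi_\xi+(p(v)-p(\overline{v}))_\xi-\frac{1}{v}\phi_{\xi\xi}=\dot{X}u^S_\xi+\mathcal{F}-S,
\end{equation*}
where $S=S_{I1}+S_{I2}+S_R$ is the wave-interaction source of \eqref{SI1SI2} and $\mathcal{F}$ collects the genuinely lower-order terms $\overline{u}_{\xi\xi}\left(\frac1v-\frac1{\overline{v}}\right)$, $\overline{u}_\xi\overline{v}_\xi\left(\frac1{\overline{v}^2}-\frac1{v^2}\right)$, $\frac1{v^2}\phi_\xi\psi_\xi$, $\frac1{v^2}\phi_\xi\overline{v}_\xi$ and $\frac1{v^2}\overline{u}_\xi\psi_\xi$. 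I would multiply this identity by $-\phi_{\xi\xi}$ and integrate over $\R_+$ to obtain
\begin{equation*}
\frac{d}{dt}\frac12\|\phi_\xi\|_{L^2}^2+D_{u_2}+\frac{|\sigma_-|}{2}\phi_\xi(t,0)^2=-\phi_t(t,0)\phi_\xi(t,0)+\int_{\R_+}(p(v)-p(\overline{v}))_\xi\phi_{\xi\xi}\,d\xi-\int_{\R_+}\big(\dot{X}u^S_\xi+\mathcal{F}-S\big)\phi_{\xi\xi}\,d\xi,
\end{equation*}
where the trace term $\frac{|\sigma_-|}{2}\phi_\xi(t,0)^2$ on the left is nonnegative because $\sigma_-<0$.

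Next I would estimate the right-hand side term by term. Since $u(t,0)=u_-$, $u^{BL}(0)=u_-$ and $u^R(t,0)=u_*$ are independent of $t$, we have $\phi(t,0)=u^*-u^S(-(\sigma-\sigma_-)t-X(t)-\beta)$, and $\phi_t(t,0)=-\overline{u}_t(t,0)$ is controlled by a derivative of $u^S$; both are $O(\delta_Se^{-C\delta_St}e^{-C\delta_S\beta})$ by Lemma~\ref{lem:viscous_shock} (using $|\dot{X}|\le C\e_1$), so together with the trace interpolation $|\phi_\xi(t,0)|^2\le2\|\phi_\xi\|_{L^2}\|\phi_{\xi\xi}\|_{L^2}$ and Young's inequality the boundary term contributes a small multiple of $D_{u_2}$ plus a time-integrable quantity bounded by $Ce^{-C\delta_S\beta}$. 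The cross term is handled by Young, $\int(p(v)-p(\overline{v}))_\xi\phi_{\xi\xi}\le\eta D_{u_2}+CD_{v_1}$, and $\int_0^tD_{v_1}\,ds$ is precisely the dissipation already controlled in Lemma~\ref{lem:vhigh}. For $\mathcal{F}$ the key points are that the elementary-wave second derivatives are comparable to the first ($|u^{BL}_{\xi\xi}|\le C|u^{BL}_\xi|$, $|u^R_{\xi\xi}|\le C|u^R_\xi|$, $|u^S_{\xi\xi}|\le C|u^S_\xi|$) and that $u^{BL}_\xi,u^R_\xi,|u^S_\xi|$ are all bounded, so that $\int_{\R_+}(\overline{u}_{\xi\xi})^2\psi^2\,d\xi\le C(G_{BL}+G_R+G_1+C_1G_S)$ by the reduction used for $B_7$ and \eqref{est-vdiff}; the quadratic products $\phi_\xi\psi_\xi$, $\phi_\xi\overline{v}_\xi$ and $\overline{u}_\xi\psi_\xi$ are absorbed into $\eta D_{u_2}$ plus $C\e_1^2\int_0^t(D_{v_1}+D_{u_1}+G_{BL}+G_R+G_1+G_S)\,ds$ after Gagliardo-Nirenberg interpolation together with $\|\psi_\xi\|_{L^2}^2\le C(D_{v_1}+G_{BL}+G_R+G_1+G_S)$ (coming from $(p(v)-p(\overline{v}))_\xi=p'(v)\psi_\xi+(p'(v)-p'(\overline{v}))\overline{v}_\xi$ and Lemma~\ref{lem:G_v}). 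Finally $\int\dot{X}u^S_\xi\phi_{\xi\xi}\le\eta D_{u_2}+C\delta_S^2(\delta_S|\dot{X}|^2)$ and $\int S\phi_{\xi\xi}\le\eta D_{u_2}+C\|\mathcal{R}\|_{L^2}^2+C\|u^R_{\xi\xi}\|_{L^2}^2+(\text{decaying wave-derivative products})$, whose time integrals are $\le C\delta_0^{1/6}$ by Lemma~\ref{L2 wave interaction}, \eqref{rarefaction_properties}, \eqref{shock-properties} and \eqref{BL_properties}.

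Choosing $\eta$ small enough to absorb every $\eta D_{u_2}$-term and integrating in time then gives an inequality of the form
\begin{equation}\label{u_H1}
\begin{aligned}
\|(u-\overline{u})_\xi(t,\cdot)\|_{L^2}^2+\int_0^tD_{u_2}\,ds&\le C\Big(\|(v-\overline{v})(0,\cdot)\|_{H^1}^2+\|(u-\overline{u})(0,\cdot)\|_{H^1}^2\Big)+Ce^{-C\delta_S\beta}+C\delta_0^{1/6}\\
&\quad+C_2\int_0^t\big(G_1+G_S+G_{BL}+G_R+D_{v_1}+D_{u_1}+\delta_S|\dot{X}|^2\big)\,ds,
\end{aligned}
\end{equation}
which fixes the constant $C_2$ referred to in Lemma~\ref{lem:vhigh}. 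Adding $\frac{1}{2C_2}$ times \eqref{u_H1} to the conclusion of Lemma~\ref{lem:vhigh}, the factor $\frac{1}{2C_2}>\frac{1}{4C_2}$ lets the $\frac{1}{4C_2}\int_0^tD_{u_2}\,ds$ term on the right of Lemma~\ref{lem:vhigh} be absorbed, while $\frac{1}{2C_2}<1$ keeps the coefficients of all the good terms and of $D_{v_1},D_{u_1},\delta_S|\dot{X}|^2$ on the left positive; using once more the smallness of $\delta_0$ and $\e_1$ to reabsorb $G_v$ (via Lemma~\ref{lem:G_v}) and the $\e_1^2$-weighted terms yields the claimed $H^1$-estimate. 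I expect the main obstacle to be precisely this constant bookkeeping---ensuring that after the Young and interpolation steps the $D_{u_2}$-loop between \eqref{u_H1} and Lemma~\ref{lem:vhigh} genuinely closes---together with the control of the boundary term $[\phi_t\phi_\xi]_{\xi=0}$, which relies crucially on the shock being located far from the origin (large $\beta$) and on the smallness of $\dot{X}$.
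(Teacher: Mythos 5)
Your proposal follows the same route as the paper's proof: multiply the momentum perturbation equation by $-(u-\overline{u})_{\xi\xi}$, integrate in space, control the boundary trace via the shock's exponential decay away from $\xi=0$ together with the sign of $\sigma_-$, handle the interaction source $S$ via Lemma \ref{L2 wave interaction} and the wave-derivative decay estimates, and then close the loop by adding a small multiple of the resulting $\psi_\xi$-energy inequality to Lemma \ref{lem:vhigh}. The only minor divergence is bookkeeping: the paper multiplies \eqref{u_H1} by $\tfrac{1}{2\max(C_2,C_3,1)}$ rather than $\tfrac{1}{2C_2}$, which is safer since the coefficient $C_3$ multiplying $G_1,G_S,G_R,G_{BL},D_{u_1},\delta_S|\dot X|^2$ in \eqref{u_H1} need not equal the coefficient $C_2$ of $D_{v_1}$, but this does not affect the substance of the argument.
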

\begin{proof}
For simplicity, we define $\psi \coloneqq u - \overline{u}$. Then, from \eqref{eq:NS_inflow} and \eqref{eq:composite_wave}, $\psi$ satisfies 
\[
\psi_t - \sigma_- \psi_\xi - \dot{X}u^S_\xi + (p - \overline{p})_\xi = \left(\frac{u_\xi}{v} - \frac{\overline{u}_\xi}{\overline{v}}\right)_\xi - S.
\]
Multiplying the above equation by $-\psi_{\xi \xi}$ and integrating with respect to $\xi$, we get
\begin{align}
\begin{aligned}\label{psi_est}
    &\frac{d}{dt}\int_{\R_+} \frac{|\psi_\xi|^2}{2}\,d\xi + [\psi_t \psi_\xi ]_{\xi = 0} -\sigma_-  \left[\frac{|\psi_\xi|^2}{2}\right]_{\xi = 0}\\
    &\quad = -\dot{X} \int_{\R_+} u_\xi^S \psi_{\xi \xi} \,d\xi +  \int_{\R_+} (p - \overline{p})_\xi \psi_{\xi \xi} \,d\xi - \int_{\R_+} \left(\frac{u_\xi}{v} - \frac{\overline{u}_\xi}{\overline{v}}\right)_\xi \psi_{\xi \xi}\,d\xi + \int_{\R_+} S \psi_{\xi \xi}\,d\xi\\
    &\quad =: J_1 + J_2 + J_3 + J_4. 
\end{aligned}
\end{align}

\noindent $\bullet$ (Estimate of $J_1$): Using Cauchy-Schwarz inequality and \eqref{shock-properties}, we have
\[
|J_1| \leq |\dot{X}|\int_{\R_+} |u_\xi^S||\psi_{\xi \xi}|\,d\xi \leq C\delta_S^3|\dot{X}|^2 + \frac{1}{20} D_{u_2}.
\]
\noindent $\bullet$ (Estimate of $J_2$): We use Young's inequality to have
\[
|J_2| \leq \frac{1}{20}D_{u_2} +C D_{v_1}.
\]
\noindent $\bullet$ (Estimate of $J_3)$: We estimate $J_3$ as
\begin{align*}
J_3 &= -\int_{\R_+} \frac{1}{v} |\psi_{\xi\xi}|^2\,d\xi - \int_{\R_+} \left(\frac{1}{v} \right)_\xi \psi_\xi \psi_{\xi \xi}\,d\xi - \int_{\R_+} \overline{u}_{\xi\xi} \left(\frac{1}{v} - \frac{1}{\overline{v}}\right)\psi_{\xi\xi}\,d\xi\\
&\quad - \int_{\R_+} \overline{u}_\xi \left(\frac{1}{v} - \frac{1}{\overline{v}}\right)_\xi \psi_{\xi \xi}\,d\xi\\
&= -D_{u_2} + J_{31} + J_{32}+J_{33}.
\end{align*}
Using $\left(\frac{1}{v}\right)_\xi \leq C|v_\xi| \leq C\big(|(v-\overline{v})_\xi| + |\overline{v}_\xi|\big)$, a priori bound, and the Sobolev interpolation, we get
\begin{align*}
    |J_{31}| &\leq C \int_{\R_+} |(v-\overline{v})_\xi||\psi_{\xi}||\psi_{\xi \xi}|\,d\xi + C  \int_{\R_+} |\overline{v}_\xi||\psi_{\xi}||\psi_{\xi \xi}|\,d\xi  \\
    &\leq C\norm{(v-\overline{v})_\xi}_{L^2}\norm{\psi_\xi}_{L^\infty}\norm{\psi_{\xi\xi}}_{L^2} + C\norm{\overline{v}_\xi}_{L^\infty}\norm{\psi_\xi}_{L^2}\norm{\psi_{\xi\xi}}_{L^2}\\
    &\leq C \e_1 \norm{\psi_\xi}_{L^2}^{1/2}\norm{\psi_{\xi\xi}}_{L^2}^{3/2} + C\norm{\psi_\xi}_{L^2}\norm{\psi_{\xi\xi}}_{L^2}\\
	&\leq \frac{1}{20}D_{u_2} + C D_{u_1}.
\end{align*}
Using $|\overline{u}_{\xi \xi}| \leq C|\overline{u}_{\xi}|$, we have
\[
|J_{32}| \leq C \int_{\R_+} |\overline{u}_\xi||v-\overline{v}||\psi_{\xi \xi}|\,d\xi \leq \frac{1}{20}D_{u_2} + CG_v,
\]

and
\begin{align*}
    |J_{33}| &\leq C \int_{\R_+} |\overline{u}_\xi|\big(|v-\overline{v}| 
 + |(v-\overline{v})|_\xi \big)|\psi_{\xi \xi}|\,d\xi \leq \frac{1}{20}D_{u_2} + C \left(G_v + \int_{\R_+} |(v-\overline{v})_\xi|^2\,d\xi \right).
\end{align*}
Since
\[
(p - \overline{p})_\xi = p'(v)(v-\overline{v})_\xi + \overline{v}_\xi(p'(v) - p'(\overline{v})),
\]
we find that
\begin{equation*}
	\begin{aligned}
		\int_{\R_+} |(v-\overline{v})_\xi|^2\,d\xi &\leq  C \int_{\R_+} |(p-\overline{p})_\xi|^2\,d\xi +  C\int_{\R_+} |\overline{v}_\xi|^2 |v-\overline{v}|^2\,d\xi\\
		&\leq C D_{v_1} + CG_v.
   \end{aligned}
\end{equation*}

This implies that
\[
 |J_{33}| \leq \frac{1}{20}D_{u_2} + C \left(G_v  + D_{v_1} \right).
\]
\noindent $\bullet$ (Estimate of $J_4$): We use Young's inequality to have
\[
J_4 \leq \frac{1}{20}D_{u_2} + C\int_{\R_+} S^2\,d\xi.
\]

Next, we divide $\int_0^t \int_{\R_+} S^2\,d\xi\,ds$ into three parts:
\[
\int_0^t \int_{\R_+} S^2\,d\xi\,ds \leq C\left(\int_0^t \int_{\R_+} S_{I1}^2\,d\xi\,ds + \int_0^t\int_{\R_+} S_{I2}^2\,d\xi\,ds +\int_0^t \int_{\R_+} S_{R}^2\,d\xi\,ds\right),
\]
where $S_{I1}$, $S_{I2}$, and $S_R$ are defined in \eqref{SI1SI2}. First, we use \eqref{SI1 decomposition} and Lemma \ref{L2 wave interaction} to have
\begin{align*}
\int_0^t \int_{\R_+} S_{I1}^2\,d\xi\,ds &\leq \int_0^t \int_{\R_+} \mathcal{R}^2\,d\xi\,ds \leq C\delta_0^{1/4}.
\end{align*}
To estimate the $S_{I2}$ term, observe from \eqref{SI2_estimate} that
\[
|S_{I2}|^2 \leq C\left[|v_\xi^{BL}|^2|v_\xi^{R}|^2 + |v_\xi^{R}|^2|v_\xi^{S}|^2+|v_\xi^{S}|^2|v_\xi^{BL}|^2 \right].
\]
Then, we have
\begin{align*}
    &\int_0^t \int_{\R_+} |v_\xi^{BL}|^2|v_\xi^{R}|^2\,d\xi \,ds \leq C\delta^{1/4}_R\delta_{BL}^4 \int_0^t \frac{1}{(1+s)^{7/4}} \int_{\R_+} \frac{1}{(1+\delta_{BL}\xi)^4}\,d\xi\,ds \leq C\delta^{1/4}_R\delta_{BL}^3,\\
    &\int_0^t \int_{\R_+} |v_\xi^{R}|^2|v_\xi^{S}|^2\,d\xi\,ds\le \int_0^t \frac{1}{(1+s)^2}\norm{v_\xi^S}_{L^2}^2\,ds \leq C\delta_S^3,
\end{align*}
and
\begin{align*}
    \int_0^t \int_{\R_+} |v_\xi^{S}|^2|v_\xi^{BL}|^2\,d\xi\,ds &\leq  C\delta^4_S \int_0^t \left(\int_0^{bs}+\int_{bs}^{\infty}\right)e^{-C\delta_S|\xi-(\sigma-\sigma_-)s-X(s)-\beta|}\frac{\delta_{BL}^4}{(1+\delta_{BL}\xi)^4}\,d\xi\,ds\\
    & \le C\int_0^t  \left(\delta^3_S \delta_{BL}^4 e^{-C\delta_S s}+ \frac{\delta^4_S\delta_{BL}^3}{(1+\delta_{BL}s)^3}\right)\,ds\\
    & \leq C\delta^2_S \delta_{BL}^4 +C\delta^4_S \delta_{BL}^2,
\end{align*}
where $b = (\sigma - \sigma_-)/2$.

For $S_R$, we use \eqref{SR estimate} to have
\begin{align*}
    \int_0^t \int_{\R_+} S_{R}^2\,d\xi\,ds &\leq C \int_0^t \int_{\R_+} |u_{\xi \xi}^R|^2\,d\xi\,ds + C\int_0^t \int_{\R_+} |u_{\xi}^R|^2|v_{\xi}^R|^2\,d\xi\,ds\\
    &\leq C\left(\int_{1+t \leq \delta_R^{-1}} \delta_R^2\,ds + \int_{1+t \ge  \delta_R^{-1}} \frac{1}{(1+s)^2}\,ds \right) + C\int_0^t \norm{u_{\xi}^R}_{L^4}^4\,ds\\
    &\leq C\delta_R.
\end{align*}
Thus, we have
\[
\int_0^t \int_{\R_+} S^2\,d\xi\,ds \leq C\delta_0^{1/4}.
\]
Combining all the estimates for $J_i$ with \eqref{psi_est}, and integrating it over $[0,t]$ for any $t\leq T$, we obtain
\begin{equation}\label{est:uhigh}
	\begin{aligned}
		&\int_0^t \left(\frac{d}{ds}\int_{\R_+} \frac{|\psi_\xi|^2}{2}\,d\xi \right) \,dt +\frac{1}{2}\int_0^t D_{u_2}\,ds\\
		&\quad \leq  \int_0^t \left[- \psi_t \psi_\xi + \sigma_-   \frac{|\psi_\xi|^2}{2} \right]_{\xi=0}\,ds + C\int_0^t \left(\delta_S^3|\dot{X}|^2 + D_{u_1}+  D_{v_1} + G_v \right)\,ds + C\delta_0^{1/4}.\\
	\end{aligned}
\end{equation}
\noindent $\bullet$ (Estimate on the boundary terms): Now, we estimate the boundary term. From $\sigma_- <0$, we have
\begin{equation}\label{est:bdu1}
	\begin{aligned}
		\left[- \psi_t \psi_\xi + \sigma_-   \frac{|\psi_\xi|^2}{2} \right]_{\xi = 0} &\leq  \left[C|\psi_t|^2 -\frac{\sigma_-}{4}|\psi_\xi|^2 + \frac{\sigma_-}{2}|\psi_\xi|^2 \right]_{\xi = 0}\leq C\left[|\psi_t|^2 \right]_{\xi = 0}.
	\end{aligned}
\end{equation}
Therefore, it suffices to estimate $[|\psi_t|^2 ]_{\xi =0}$.
Since
\[
[|(u^S - u^*)_t|]_{\xi = 0 } \leq C|(u^S)'(-(\sigma - \sigma_-)t - X(t) - \beta)||\dot{X} + (\sigma - \sigma_-)| \leq C\delta_S^2 e^{-C\delta_S((\sigma - \sigma_-)t + \beta)},
\]
we obtain
\begin{equation}\label{est:bdu2}
	\int_0^t [|\psi_t|^2 ]_{\xi = 0} \,ds = \int_0^t [|(u^S - u^*)_t|^2 ]_{\xi = 0}\,ds \leq C\delta_S^4 e^{-C\delta_S \beta} \int_0^t  e^{-C\delta_S s}\,ds\leq C\delta_S^3 e^{-C \delta_S \beta}.
\end{equation}
Combining all these estimates and using Lemma \ref{lem:G_v}, and for sufficiently small $\delta_0 >0$, we obtain
\begin{align}
    \begin{aligned}\label{u_H1}
        	&\int_{\R_+} |(u-\overline{u})_\xi|^2(t,\xi)\,d\xi + \int_0^t D_{u_2}\,ds\\
    &\quad \leq \int_{\R_+} |(u-\overline{u})_\xi|^2(0,\xi)\,d\xi + C_2\int_0^t D_{v_1}\,ds\\
    &\qquad + C_3 \int_0^t \left(C\delta_S^3 |\dot{X}|^2 +(G_1 + G_S + G_R + G_{BL}+D_{u_1})\right)\,ds + C\delta_S^3e^{-C\delta_S\beta} + C\delta_0^{1/4},
    \end{aligned}
\end{align}
for some positive constants $C_2$ and $C_3$.
Multiplying \eqref{u_H1} by $\frac{1}{2\max(C_2, C_3, 1)}$, and adding the result to \eqref{higher v}, we obtain the desired estimate in Lemma \ref{lem:uhigh}.

\end{proof}

\begin{appendix}
    
 \section{Global existence of perturbed solution}\label{app:A}
\setcounter{equation}{0}

In this appendix, we present the proof of global existence of a solution to \eqref{eq:NS_inflow}. Assume that the positive constants $C_0$, $\delta_0$, $\e_1$, and $\beta$ from Proposition \ref{apriori-estimate} are given, where $\delta_0$ and $\e_1$ are chosen sufficiently small and $\beta$ is chosen sufficiently large, if necessary. Let $(\hat{v},\hat{u})(x)$ denote the smooth functions given in Proposition \ref{prop:local}, satisfying the following estimate:
		\begin{equation}\label{ubarvbar}
			\begin{aligned}
				\|(\hat{v}-v^*,\hat{u}-u^*)\|_{L^2(0,\beta)} +\|(\hat{v}-v_+,\hat{u}
				 -u_+)\|_{L^2(\beta,\infty)}+\|\pa_\xi (\hat{v},\hat{u})\|_{L^2(\R_+)} \le C_* \delta_0,
			\end{aligned}
				\end{equation}
        for some constant $C_*>0$. Define the smooth functions $(\underline{v}, \underline{u})$ by
		\[
			\underline{v} := v^{BL} + \hat{v} - v_*, \quad \underline{u} := u^{BL} + \hat{u} - u_*.
		\]
        
		Next, we estimate the perturbation between the smooth functions $(\underline{v},\underline{u})$ and the initial superposition $(\overline{v}(0,\cdot),\overline{u}(0,\cdot))$. By using Lemma \ref{lem:boundary_layer}, Lemma \ref{lem:rarefaction}, and Lemma \ref{lem:viscous_shock}, we have
	\begin{equation}
		\begin{aligned}\label{est-init}
			&\norm{\underline{v}(\cdot)-\overline{v}(0,\cdot)}_{H^1(\mathbb{R}_+)}+\norm{\underline{u}(\cdot)-\overline{u}(0,\cdot)}_{H^1(\mathbb{R}_+)}\\
			&\quad= \norm{v^R + v^S - \hat{v} - v^*}_{H^1(\mathbb{R}_+)}+ \norm{u^R + u^S - \hat{u} - u^*}_{H^1(\mathbb{R}_+)}\\
			&\quad \le \|(\hat{v}-v^*,\hat{u}-u^*)\|_{L^2(0,\beta)} +\|(\hat{v}-v_+,\hat{u}
			-u_+)\|_{L^2(\beta,\infty)}\\
		 &\qquad +\|(v^R-v^*,u^R-u^*)(0,\cdot)\|_{L^2(0,\beta)} + \|(v^S-v^*,u^S-u^*)(\cdot-\beta)\|_{L^2(0,\beta)} \\ 
		 &\qquad +\|(v^R-v^*,u^R-u^*)(0,\cdot)\|_{L^2(\beta,\infty)} + \|(v^S-v_+,u^S-u_+)(\cdot-\beta)\|_{L^2(\beta,\infty)} \\  
		 &\qquad +\|\pa_\xi (\hat{v},\hat{u})\|_{L^2(\mathbb{R}_+)} + \|\partial_\xi (v^R,u^R)\|_{L^2(\mathbb{R}_+)} + \|\partial_\xi (v^S,u^S)\|_{L^2(\mathbb{R}_+)} \\
		 &\quad \le C_{**} \sqrt{\delta_0} ,
			\end{aligned}
			\end{equation}
    for some constant $C_{**}>0$. 
    
    For a given $\e_1$, we choose $\delta_0$ sufficiently small and $\beta$ sufficiently large so that for any $\delta_{BL}, \delta_R, \delta_S < \delta_0$, the condition $\delta_S \beta \gg 1$ holds, and the inequality
 \[ \frac{\e_1}{4(C_0+1)}-C_{**} \sqrt{\delta_0}- C_* 
 \delta_0-\delta_0^{1/6}-e^{-C_0\delta_S \beta}>0\]
 is satisfied. Using this inequality, we define two positive constants $\varepsilon_*$ and $\varepsilon_0$ as
 \[\varepsilon_*:=\frac{\e_1}{2(C_0+1)}- C_{**} \sqrt{\delta_0} -\delta_0^{1/6}-e^{-C_0 \delta_S \beta} \quad \text{and} \quad \varepsilon_0:=\frac{\e_1}{4(C_0+1)}.\]

Now, consider initial data $(v_0,u_0)$ satisfying the following smallness assumption in Theorem~\ref{thm:main} 
\begin{equation} \label{v0u0}
\begin{aligned}
   &\|\big(v_0 - v^* - (v^{BL} - v_*), u_0 - u^*- (u^{BL} - u_*)\big)\|_{L^2(0,\beta)}\\ 
   &\quad + \|\big(v_0- v_+  - (v^{BL} - v_*),u_0 -u_+  - (u^{BL} - u_*)\big)\|_{L^2(\beta,\infty)} +\|\pa_\xi (v_0-v^{BL},u_0-u^{BL})\|_{L^2(\R_+)}\\
   & <\varepsilon_0.
\end{aligned}
\end{equation}
	Using the estimates \eqref{ubarvbar} and \eqref{v0u0}, the perturbation between the initial data $(v_0,u_0)$ and the functions $(\underline{v},\underline{u})$ is also small
	\begin{equation} \label{v0v_}
    \begin{aligned}
	\|(v_0-\underline{v},u_0-\underline{u})\|_{H^1(\mathbb{R}_+)}  \le \varepsilon_*,
 \end{aligned}
	\end{equation}
	which yields, by the Sobolev embedding and the smallness of $\varepsilon_*$,
	\[\frac{v_-}{2}\le v_0(x)\le 2v_+,\quad x\in\mathbb{R}_+.\]
	Then, we apply Proposition \ref{prop:local} to obtain the local existence of a unique solution $(v,u)$ on the time interval $[0,T_0]$ for some $T_0 > 0$, satisfying
	\begin{equation}\label{est-local-1}
	\|(v-\underline{v},u-\underline{u})\|_{L^\infty(0,T_0;H^1(\mathbb{R}_+))}
    \le \frac{\e_1}{2}
	\end{equation}
	and
	\[\frac{v_-}{3}\le v(t,x)\le 3v_+,\quad (t,x)\in[0,T_0]\times\mathbb{R}_+.\]

	On the other hand, we estimate the perturbation between $(\underline{v},\underline{u})$ and $(\overline{v},\overline{u})$ as in \eqref{est-init}:
	\begin{align*}
	&\|(\underline{v}-\overline{v},\underline{u}-\overline{u})(t,\cdot)\|_{H^1(\mathbb{R}_+)}\\
    &\quad \le  \|(\hat{v}-v^*,\hat{u}-u^*)\|_{L^2(0,\beta)} +\|(\hat{v}-v_+,\hat{u}
	-u_+)\|_{L^2(\beta,\infty)}\\ 
 &\qquad +\|(v^R-v^*,u^R-u^*)(t,\cdot )\|_{L^2(0,\beta)} + \|(v^S-v^*,u^S-u^*)(\cdot-(\sigma-\sigma_-)t-X(t)-\beta)\|_{L^2(0,\beta)} \\ 
 &\qquad +\|(v^R-v^*,u^R-u^*)(t,\cdot )\|_{L^2(\beta,\infty)} + \|(v^S-v_+,u^S-u_+)(\cdot-(\sigma-\sigma_-)t-X(t)-\beta)\|_{L^2(\beta,\infty)} \\   
 &\qquad +\|\pa_\xi (\hat{v},\hat{u})\|_{L^2(\mathbb{R}_+)} + \|\partial_\xi (v^R,u^R)\|_{L^2(\mathbb{R}_+)} + \|\partial_\xi (v^S,u^S)\|_{L^2(\mathbb{R}_+)} \\ 
&\quad \le C\delta_R \sqrt{1+ |1+\sigma_-t| } + C\sqrt{\delta_S} \sqrt{(1+|(\sigma-\sigma_-)|t+|X(t)|)} \le C\sqrt{\delta_0}(1+\sqrt{t}).
	\end{align*}
We now take $T_1\in(0,T_0)$ small enough so that $C\sqrt{\delta_0}(1+\sqrt{T_1})\le \frac{\e_1}{2}$, and therefore
	\begin{equation}\label{est-local-2}
	\|(\underline{v}-\overline{v},\underline{u}-\overline{u})\|_{L^\infty(0,T_1;H^1(\mathbb{R}_+))} \le\frac{\e_1}{2}.
	\end{equation}

	Combining \eqref{est-local-1} and \eqref{est-local-2}, we observe that the smallness assumption in Proposition \ref{apriori-estimate} is satisfied on $[0,T_1]$:
	\[\|(v-\overline{v},u-\overline{u})\|_{L^\infty(0,T_1;H^1(\mathbb{R}_+))} \le\e_1.\]
 In particular, since $X(t)$ is absolutely continuous and $(v-\underline{v},u-\underline{u}) \in C([0,T_1];H^1(\mathbb{R}_+))$, it follows that
 \[(v-\overline{v},u-\overline{u}) \in C([0,T_1];H^1(\mathbb{R}_+)).\]

To attain global existence, we use a standard continuation argument. Suppose the maximal time
 \[T_M:=\sup \left\{ t>0 : \sup_{s \in [0,t]}  \| (v-\overline{v},u-\overline{u})(s,\cdot) \|_{H^1(\mathbb{R}_+)}  \le \e_1 \right\}\]
 is finite. Then, the continuity argument  implies that
 \[\sup_{s \in [0,T_M]}  \| (v-\overline{v},u-\overline{u})(s,\cdot) \|_{H^1(\mathbb{R}_+)}  =\e_1.\]
 However, using \eqref{est-init} and \eqref{v0v_}, we can estimate the initial perturbation as 
 \begin{align*}
     &\| (v-\overline{v},u-\overline{u})(0,\cdot) \|_{H^1(\mathbb{R}_+)} 
     \le  \frac{\e_1}{2(C_0+1)} - \delta_0^{1/6} -  e^{-C_0\delta_S \beta}.
 \end{align*}
Using this estimate and applying Proposition \ref{apriori-estimate}, we obtain
 \begin{align*}
     \sup_{s\in[0,T_M]}\norm{(v-\overline{v},u-\overline{u})(t,\cdot)}_{H^1 (\mathbb{R}_+)}  &\le  \frac{\e_1}{2},
 \end{align*}
	which is a contradiction. Hence, $T_M=+\infty$ and the solution exists globally. 
    
    Consequently, we obtain the following global estimate
	\begin{equation}
	\begin{aligned}\label{est-infinite}
	& \sup_{t>0}\norm{(v-\overline{v},u-\overline{u})(t,\cdot)}_{H^1 (\mathbb{R}_+)}^2 +\delta_S \int_0^\infty | \dot{X}(t)|^2 \, d t  \\ 
    &\quad +\int_0^\infty \left( \mathcal{G}_1+\mathcal{G}_2 + \mathcal{D}_{v} + \mathcal{D}_{u_1} +\mathcal{D}_{u_2} \right)  \, dt  \\ 
	& \le C_0 \norm{(v-\overline{v},u-\overline{u})(0,\cdot)}_{H^1 (\mathbb{R}_+)}^2  + C_0 \delta_0^{1/6}+C_0e^{-C_0\delta_S \beta}<\infty
	\end{aligned}
	\end{equation}
	and, for $t>0$,
	\begin{equation} \label{eq: X bound}
	|\dot{X}(t)|\le C_0 \left(\|(v-\overline{v})(t,\cdot)\|_{L^\infty(\mathbb{R}_+)}+\|(u-\overline{u})(t,\cdot)\|_{L^\infty(\mathbb{R}_+)}\right).
	\end{equation}

	\section{Time-asymptotic behavior}\label{app:B}
	\setcounter{equation}{0}
	
	In this appendix, we present the asymptotic convergence towards the superposition. Define
	\[g(t):=\|(v-\overline{v})_\xi\|_{L^2 (\mathbb{R}_+)}^2+\|(u-\overline{u})_\xi\|_{L^2(\mathbb{R}_+)}^2.\]
	We will show that $g\in W^{1,1}(\R_+)$, which implies $\lim_{t\to\infty}g(t)=0$. Once we show it, we apply  Gagliardo-Nirenberg interpolation inequality alongside the uniform bound in \eqref{est-infinite} to get
	\begin{equation} \label{eq: v,u limit}
    \begin{aligned}
        \|(v-\overline{v},u-\overline{u})\|_{L^\infty(\mathbb{R}_+)} \le \|(v-\overline{v},u-\overline{u})\|_{L^2(\mathbb{R}_+)}^{1/2} \|(v-\overline{v},u-\overline{u})_\xi\|_{L^2(\mathbb{R}_+)}^{1/2} \to 0, \quad \text{as} \quad t \to \infty.
    \end{aligned}
	\end{equation}
	Furthermore, \eqref{eq: X bound} and \eqref{eq: v,u limit} imply
	\[ \lim_{t \to \infty} |\dot{X}(t)| \le C_0 \lim_{t \to \infty}  \norm{(v-\overline{v},u-\overline{u})(t,\cdot)}_{L^\infty(\mathbb{R}_+)} =0.\]
	Therefore, it remains to show that $g\in W^{1,1}(\R_+)$.\\

	\noindent (1) $g\in L^1(\bbr_+)$: From \eqref{est-infinite}, we have 

	\begin{align*}
	\int_0^\infty |g(t)|\,dt
	&\le C\int_0^\infty (\mathcal{D}_v+\mathcal{D}_{u_1}+\mathcal{G}_2)\,d t<\infty,
	\end{align*}
	
	\noindent (2) $g'\in L^1(\R_+)$: 
	First, note that 
	\[
		\int_0^\infty |g'(t)|dt = \int_0^\infty \left|2\int_{\R_+} (v-\bar{v})_{\xi t}(v-\bar{v})_{\xi} + \frac{d}{dt}\int_{\R_+} |(u-\bar{u})_\xi|^2d\xi \right|dt.
	\]

	Using \eqref{eq:vpert}, integration by parts, Young's inequality and \eqref{est:bdv}, we have
	\begin{align*}
		\int_0^\infty \left|\int_{\R_+} (v-\bar{v})_{\xi t}(v-\bar{v})_{\xi} d\xi\right|\,dt &= \int_0^\infty \left|\int_{\mathbb{R}_+} \left(  \sigma_- (v-\overline{v})_{\xi\xi } +
		(u-\overline{u})_{\xi\xi}+\dot{X}(t)v^S_{\xi\xi}\right)(v-\overline{v})_\xi\,d \xi\right|\,dt\\
		&\le C\int_0^\infty \Big[|(v-\bar{v})_{\xi}|^2\Big]_{\xi=0} + C\int_0^\infty \big(\mathcal{D}_{v} + \mathcal{D}_{u_2} + \delta_S |\dot{X}|^2\big)\,dt\\
		&\le C+ C\int_0^\infty \big(\mathcal{D}_{v} + \mathcal{D}_{u_1} + \mathcal{D}_{u_2} + \delta_S |\dot{X}|^2\big)\,dt < +\infty.
	\end{align*}
	Finally, following the calculations in deriving \eqref{est:uhigh}, and using \eqref{est:bdu1} and \eqref{est:bdu2}, we obtain 
	\begin{align*}
		\int_0^\infty \left|\frac{d}{dt}\int_{\R_+} |(u-\bar{u})_\xi|^2d\xi \right|dt  &\le  C\delta_0^{1/4}+ \int_0^\infty \left| \left[- (u-\bar{u})_t (u-\bar{u})_\xi + \sigma_-   \frac{|(u-\bar{u})_\xi|^2}{2} \right]_{\xi=0} \right|\,dt\\
		&\quad + C\int_0^\infty \left(\mathcal{G}_2 + \mathcal{D}_{v} + \mathcal{D}_{u_1} + \mathcal{D}_{u_2} + \delta_S|\dot{X}|^2 \right)\, dt < +\infty.
	\end{align*}  
    Combining all these estimates, we conclude that  $g\in W^{1,1}(\R_+)$ and complete the proof of the asymptotic behavior in Theorem \ref{thm:main}.
\end{appendix}

\noindent \textbf{Statements and Declarations}\\

\noindent \textbf{Data availability:} No datasets were generated or analyzed during the current study. \vspace{0.3cm}

\noindent \textbf{Conflict of interest:} The authors declare that they have no conflicts of interest with this work.

\bibliographystyle{amsplain}
\bibliography{reference} 

\end{document}